\newtheorem{theorem}{Theorem}[section]
\newtheorem{prop}[theorem]{Proposition}
\newtheorem{prop-def}{Proposition-Definition}[section]
\newtheorem{coro-def}{Corollary-Definition}[section]
\theoremstyle{definition}
\newtheorem{defn}[theorem]{Definition}
\newtheorem{remark}[theorem]{Remark}
\newtheorem{exam}[theorem]{Example}
\newcommand{\nc}{\newcommand}
\nc{\tred}[1]{\textcolor{red}{#1}}
\nc{\tblue}[1]{\textcolor{blue}{#1}}
\nc{\tgreen}[1]{\textcolor{green}{#1}}
\nc{\tpurple}[1]{\textcolor{purple}{#1}}
\nc{\btred}[1]{\textcolor{red}{\bf #1}}
\nc{\btblue}[1]{\textcolor{blue}{\bf #1}}
\nc{\btgreen}[1]{\textcolor{green}{\bf #1}}
\nc{\btpurple}[1]{\textcolor{purple}{\bf #1}}
\nc{\NN}{{\mathbb N}}
\nc{\ncsha}{{\mbox{\cyr X}^{\mathrm NC}}} \nc{\ncshao}{{\mbox{\cyr
X}^{\mathrm NC}_0}}
\newcommand{\efootnote}[1]{}
\renewcommand{\textbf}[1]{}
\newcommand{\delete}[1]{}
\nc{\mlabel}[1]{\label{#1}}  
\nc{\mcite}[1]{\cite{#1}}  
\nc{\mref}[1]{\ref{#1}}  
\nc{\mbibitem}[1]{\bibitem{#1}} 
\nc{\mlabel}[1]{\label{#1}{\hfill \hspace{1cm}{\bf{{\ }\hfill(#1)}}}}
\nc{\mcite}[1]{\cite{#1}{{\bf{{\ }(#1)}}}}  
\nc{\mref}[1]{\ref{#1}{{\bf{{\ }(#1)}}}}  
\nc{\mbibitem}[1]{\bibitem[\bf #1]{#1}} 
\nc{\opa}{\ast} \nc{\opb}{\odot} \nc{\op}{\bullet} \nc{\pa}{\frakL}
\nc{\arr}{\rightarrow} \nc{\lu}[1]{(#1)} \nc{\mult}{\mrm{mult}}
\nc{\diff}{\mathfrak{Diff}}
\nc{\opc}{\sharp}\nc{\opd}{\natural}
\nc{\ope}{\circ}
\nc{\dpt}{\mathrm{d}}
\nc{\hck}{H_{RT}}
\nc{\vdf}{\calf}
\nc{\ldf}{\calf_\ell}
\nc{\hlf}{H_\ell}
\nc{\onek}{\mathbf{1}_\bfk}
\nc{\tforall}{\quad \text{ for all }}
\nc{\AW}{\mathcal{A}}
\nc{\rba}{Rota-Baxter algebra\xspace}
\nc{\rbas}{Rota-Baxter algebras\xspace}
\nc{\ari}{\mathrm{ar}}
\nc{\lef}{\mathrm{lef}}
\nc{\Sh}{\mathrm{ST}}
\nc{\Cr}{\mathrm{Cr}}
\nc{\st}{{Schr\"oder tree}\xspace}
\nc{\sts}{{Schr\"oder trees}\xspace}
\nc{\vertset}{\Omega} 
\nc{\assop}{\quad \begin{picture}(5,5)(0,0)
\line(-1,1){10}
\put(-2.2,-2.2){$\bullet$}
\line(0,-1){10}\line(1,1){10}
\end{picture} \quad \smallskip}
\nc{\operator}{\begin{picture}(5,5)(0,0)
\line(0,-1){6}
\put(-2.6,-1.8){$\bullet$}
\line(0,1){9}
\end{picture}}
\nc{\idx}{\begin{picture}(6,6)(-3,-3)
\put(0,0){\line(0,1){6}}
\put(0,0){\line(0,-1){6}}
\end{picture}}
\nc{\pb}{{\mathrm{pb}}}
\nc{\Lf}{{\mathrm{Lf}}}
\nc{\lft}{{left tree}\xspace}
\nc{\lfts}{{left trees}\xspace}
\nc{\fat}{{fundamental averaging tree}\xspace}
\nc{\fats}{{fundamental averaging trees}\xspace}
\nc{\avt}{\mathrm{Avt}}
\nc{\rass}{{\mathit{RAss}}}
\nc{\aass}{{\mathit{AAss}}}
\nc{\vin}{{\mathrm Vin}}    
\nc{\lin}{{\mathrm Lin}}    
\nc{\inv}{\mathrm{I}n}
\nc{\gensp}{V} 
\nc{\genbas}{\mathcal{V}} 
\nc{\bvp}{V_P}     
\nc{\gop}{{\,\omega\,}}     
\nc{\bin}[2]{ (_{\stackrel{\scs{#1}}{\scs{#2}}})}  
\nc{\binc}[2]{ \left (\!\! \begin{array}{c} \scs{#1}\\
    \scs{#2} \end{array}\!\! \right )}  
\nc{\bincc}[2]{  \left ( {\scs{#1} \atop
    \vspace{-1cm}\scs{#2}} \right )}  
\nc{\bs}{\bar{S}} \nc{\cosum}{\sqsubset} \nc{\la}{\longrightarrow}
\nc{\rar}{\rightarrow} \nc{\dar}{\downarrow} \nc{\dprod}{**}
\nc{\dap}[1]{\downarrow \rlap{$\scriptstyle{#1}$}}
\nc{\md}{\mathrm{dth}} \nc{\uap}[1]{\uparrow
\rlap{$\scriptstyle{#1}$}} \nc{\defeq}{\stackrel{\rm def}{=}}
\nc{\disp}[1]{\displaystyle{#1}} \nc{\dotcup}{\
\displaystyle{\bigcup^\bullet}\ } \nc{\gzeta}{\bar{\zeta}}
\nc{\hcm}{\ \hat{,}\ } \nc{\hts}{\hat{\otimes}}
\nc{\barot}{{\otimes}} \nc{\free}[1]{\bar{#1}}
\nc{\uni}[1]{\tilde{#1}} \nc{\hcirc}{\hat{\circ}} \nc{\lleft}{[}
\nc{\lright}{]} \nc{\lc}{\lfloor} \nc{\rc}{\rfloor}
\nc{\curlyl}{\left \{ \begin{array}{c} {} \\ {} \end{array}
    \right .  \!\!\!\!\!\!\!}
\nc{\curlyr}{ \!\!\!\!\!\!\!
    \left . \begin{array}{c} {} \\ {} \end{array}
    \right \} }
\nc{\longmid}{\left | \begin{array}{c} {} \\ {} \end{array}
    \right . \!\!\!\!\!\!\!}
\nc{\onetree}{\bullet} \nc{\ora}[1]{\stackrel{#1}{\rar}}
\nc{\ola}[1]{\stackrel{#1}{\la}}
\nc{\ot}{\otimes} \nc{\mot}{{{\boxtimes\,}}}
\nc{\otm}{\overline{\boxtimes}} \nc{\sprod}{\bullet}
\nc{\scs}[1]{\scriptstyle{#1}} \nc{\mrm}[1]{{\rm #1}}
\nc{\margin}[1]{\marginpar{\rm #1}}   
\nc{\dirlim}{\displaystyle{\lim_{\longrightarrow}}\,}
\nc{\invlim}{\displaystyle{\lim_{\longleftarrow}}\,}
\nc{\mvp}{\vspace{0.3cm}} \nc{\tk}{^{(k)}} \nc{\tp}{^\prime}
\nc{\ttp}{^{\prime\prime}} \nc{\svp}{\vspace{2cm}}
\nc{\vp}{\vspace{8cm}} \nc{\proofbegin}{\noindent{\bf Proof: }}
\nc{\proofend}{$\blacksquare$ \vspace{0.3cm}}
\nc{\modg}[1]{\!<\!\!{#1}\!\!>}
\nc{\intg}[1]{F_C(#1)} \nc{\lmodg}{\!
<\!\!} \nc{\rmodg}{\!\!>\!}
\nc{\cpi}{\widehat{\Pi}}
\nc{\sha}{{\mbox{\cyr X}}}  
\newfont{\scyr}{wncyr10 scaled 550}
\nc{\ssha}{\mbox{\bf \scyr X}}
\nc{\shap}{{\mbox{\cyrs X}}} 
\nc{\shpr}{\diamond}    
\nc{\shp}{\ast} \nc{\shplus}{\shpr^+}
\nc{\shprc}{\shpr_c}    
\nc{\msh}{\ast} \nc{\zprod}{m_0} \nc{\oprod}{m_1}
\nc{\vep}{\epsilon} \nc{\labs}{\mid\!} \nc{\rabs}{\!\mid}
\nc{\sqmon}[1]{\langle #1\rangle}
\nc{\mmbox}[1]{\mbox{\ #1\ }} \nc{\dep}{\mrm{dep}} \nc{\fp}{\mrm{FP}}
\nc{\rchar}{\mrm{char}} \nc{\End}{\mrm{End}} \nc{\Fil}{\mrm{Fil}}
\nc{\Mor}{Mor\xspace} \nc{\gmzvs}{gMZV\xspace}
\nc{\gmzv}{gMZV\xspace} \nc{\mzv}{MZV\xspace}
\nc{\mzvs}{MZVs\xspace} \nc{\Hom}{\mrm{Hom}} \nc{\id}{\mrm{id}}
\nc{\im}{\mrm{im}} \nc{\incl}{\mrm{incl}} \nc{\map}{\mrm{Map}}
\nc{\mchar}{\rm char} \nc{\nz}{\rm NZ} \nc{\supp}{\mathrm Supp}
\nc{\Alg}{\mathbf{Alg}} \nc{\Bax}{\mathbf{Bax}} \nc{\bff}{\mathbf f}
\nc{\bfk}{{\bf k}} \nc{\bfone}{{\bf 1}} \nc{\bfx}{\mathbf x}
\nc{\bfy}{\mathbf y}
\nc{\base}[1]{\bfone^{\otimes ({#1}+1)}} 
\nc{\Cat}{\mathbf{Cat}}
\nc{\detail}{\marginpar{\bf More detail}
    \noindent{\bf Need more detail!}
    \svp}
\nc{\Int}{\mathbf{Int}} \nc{\Mon}{\mathbf{Mon}}
\nc{\rbtm}{{shuffle }} \nc{\rbto}{{Rota-Baxter }}
\nc{\remarks}{\noindent{\bf Remarks: }} \nc{\Rings}{\mathbf{Rings}}
\nc{\Sets}{\mathbf{Sets}} \nc{\wtot}{\widetilde{\odot}}
\nc{\wast}{\widetilde{\ast}} \nc{\bodot}{\bar{\odot}}
\nc{\bast}{\bar{\ast}} \nc{\hodot}[1]{\odot^{#1}}
\nc{\hast}[1]{\ast^{#1}} \nc{\mal}{\mathcal{O}}
\nc{\tet}{\tilde{\ast}} \nc{\teot}{\tilde{\odot}}
\nc{\oex}{\overline{x}} \nc{\oey}{\overline{y}}
\nc{\oez}{\overline{z}} \nc{\oef}{\overline{f}}
\nc{\oea}{\overline{a}} \nc{\oeb}{\overline{b}}
\nc{\weast}[1]{\widetilde{\ast}^{#1}}
\nc{\weodot}[1]{\widetilde{\odot}^{#1}} \nc{\hstar}[1]{\star^{#1}}
\nc{\lae}{\langle} \nc{\rae}{\rangle}
\nc{\lf}{\lfloor}
\nc{\rf}{\rfloor}
\nc{\QQ}{{\mathbb Q}}
\nc{\RR}{{\mathbb R}} \nc{\ZZ}{{\mathbb Z}}
\nc{\cala}{{\mathcal A}} \nc{\calb}{{\mathcal B}}
\nc{\calc}{{\mathcal C}}
\nc{\cald}{{\mathcal D}} \nc{\cale}{{\mathcal E}}
\nc{\calf}{{\mathcal F}} \nc{\calg}{{\mathcal G}}
\nc{\calh}{{\mathcal H}} \nc{\cali}{{\mathcal I}}
\nc{\call}{{\mathcal L}} \nc{\calm}{{\mathcal M}}
\nc{\caln}{{\mathcal N}} \nc{\calo}{{\mathcal O}}
\nc{\calp}{{\mathcal P}} \nc{\calr}{{\mathcal R}}
\nc{\cals}{{\mathcal S}} \nc{\calt}{{\mathcal T}}
\nc{\calu}{{\mathcal U}} \nc{\calw}{{\mathcal W}} \nc{\calk}{{\mathcal K}}
\nc{\calx}{{\mathcal X}} \nc{\CA}{\mathcal{A}}
\nc{\fraka}{{\mathfrak a}} \nc{\frakA}{{\mathfrak A}}
\nc{\frakb}{{\mathfrak b}} \nc{\frakB}{{\mathfrak B}}
\nc{\frakc}{{\mathfrak c}}
\nc{\frakD}{{\mathfrak D}} \nc{\frakF}{\mathfrak{F}}
\nc{\frakf}{{\mathfrak f}} \nc{\frakg}{{\mathfrak g}}
\nc{\frakH}{{\mathfrak H}} \nc{\frakL}{{\mathfrak L}}
\nc{\frakM}{{\mathfrak M}} \nc{\bfrakM}{\overline{\frakM}}
\nc{\frakm}{{\mathfrak m}} \nc{\frakP}{{\mathfrak P}}
\nc{\frakN}{{\mathfrak N}} \nc{\frakp}{{\mathfrak p}}
\nc{\frakS}{{\mathfrak S}} \nc{\frakT}{\mathfrak{T}}
\nc{\frakX}{{\mathfrak X}}
\nc{\BS}{\mathbb{S
}}
\font\cyr=wncyr10 \font\cyrs=wncyr7
\nc{\li}[1]{\textcolor{red}{#1}}
\nc{\lir}[1]{\textcolor{red}{Li:#1}}
\nc{\yi}[1]{\textcolor{blue}{Yi: #1}}
\nc{\shuangjian}[1]{\textcolor{purple}{shuangjian:#1}}
\nc{\revise}[1]{\textcolor{purple}{#1}}
\nc{\ID}{{\rm I}}\nc{\lbar}[1]{\overline{#1}}\nc{\bre}{{\rm bre}}
\nc{\sd}{\cals}\nc{\rb}{\rm RB}\nc{\A}{\rm A}\nc{\LL}{\rm L}\nc{\tx}{\tilde{X}}
\nc{\col}{\Delta_{RT}}\nc{\mul}{m_{RT}}\nc{\ul}{u_{RT}}\nc{\epl}{\varepsilon_{RT}}
\nc{\hl}{H_{RT}}\nc{\arro}[1]{#1}\nc{\px}{P_{\tx}}\nc{\pw}{P_{\mathfrak{w}}}\nc{\pl}{B_\omega^+}
\nc{\pp}{\pl}\nc{\ppp}[1]{B^+(#1)}\nc{\dw}{\diamond_{\mathfrak{w}}}\nc{\dl}{\diamond_{\rm \ell}}
\nc{\ncshaw}{\sha^{{\rm NC}}_{\Omega}}\nc{\ncshal}{\sha^{{\rm NC}}_{{\rm RT}}}
\nc{\ver}{\rm V}\nc{\ld}{l}\nc{\del}{\Delta_{{\rm \ell}}}\nc{\epsl}{\epsilon_{{\rm \ell}}}
\nc{\uul}{u_{{\rm \ell}}}\nc{\oneh}{\mathbf{1}}\nc{\onew}{\mathbf{1}}
\nc{\etree}{1} \nc{\conc}{m_{RT}}
\nc{\hrtb}{H_{RT}(X\sqcup\Omega)} \nc{\hrts}{H_{RT}(X, \Omega)}\nc{\rts}{\mathcal{T}(X, \Omega)}\nc{\rfs}{\mathcal{F}(X, \Omega)} \nc{\ncshall}{\sha^{{\rm NC}}_{{\rm RT}}} \nc{\ldl}{\leq_{\mathrm{dl}}} \nc{\pla}{B_{\alpha}^{+}} \nc{\plb}{B_{\beta}^{+}}
\nc{\bim}[1]{#1}  \nc{\shaop}{\sha_{\Omega}^{+}}  \nc{\shao}{\sha_{\Omega}}
\nc{\bbim}[2]{#1 #2} \nc{\bbbim}[2]{#1,\, #2} \nc{\RBF}{{\rm RBF}}
\nc{\frbf}{F_{\RBF}} \nc{\shaf}{\ssha_{\tiny{\Omega}}} \nc{\sham}{\diamond_{\Omega}}
\nc{\dnx}{\Delta_n A} \nc{\dx}{\Delta A} \nc{\dgp}{{\rm deg_{P}}}
\nc{\dgt}{{\rm deg_{T}}} \nc{\dg}{{\rm deg}} \nc{\ida}{ID($A$)} \nc{\tu}{\tilde{u}} \nc{\tv}{\tilde{v}}
 \nc{\fbase}{\calb} \nc{\LF}{\mathrm{RF}} \nc{\FFA}{\mathrm{LF}} \nc{\irr}{\mathrm{Irr}}
 \nc{\result}{\bfk\mathrm{Irr}(S_n)}  \nc{\I}{I_{\mathrm{ID},n}^0}
 \nc{\nrs}{\calr_n^\star} \nc{\ii}{\mathrm{I}} \nc{\iii}{\mathrm{II}}
\nc{\intl}{{\rm int}}\nc{\ws}[1]{{#1}}\nc{\deleted}[1]{\delete{#1}}\nc{\plas}{placements\xspace}
\nc{\Id}{\mathrm{Id}} \nc{\Irr}{\mathrm{Irr}}
\nc{\tos}{totally ordered set } \nc{\nes}{nonempty set}
\nc{\baa}{\succ_\alpha}
\nc{\laa}{\prec_\alpha}
\nc{\bba}{\succ_\beta}
\nc{\lba}{\prec_\beta}
\nc{\bdt} {\succ_\delta}
\nc{\lia} {\prec_\iota}
\nc{\bka}{\succ_\kappa}
\nc{\lmu}{\prec_\mu}
\nc{\Po}{(P_\omega)_{\omega\in \Omega}}
\newlength\xch\newlength\dbj
\newif\ifqdd
\begin{document}

\title[The Cohomology of RCW Reynolds operators  and NS-pre-Lie algebras]{The Cohomology of relative cocycle weighted Reynolds operators and NS-pre-Lie algebras}
%

\author{Shuang-Jian Guo}
\address{School of Mathematics and Statistics, Guizhou University of Finance and Economics, Guiyang, Guizhou 550025, P.\,R. China}
\email{shuangjianguo@126.com}

\author{Yi Zhang$^{*}$}\footnotetext{*Corresponding author.}
\address{School of Mathematics and Statistics,
Nanjing University of Information Science \& Technology, Nanjing, Jiangsu 210044, P.\,R. China}
\email{zhangy2016@nuist.edu.cn}

\date{\today}

\nc{\wtrb}{cocycle weighted Rota-Baxter\xspace}
\nc{\wtrey}{cocycle weighted Reynolds\xspace}
\nc{\relwtrb}{relative \wtrb}
\nc{\relwtrey}{relative \wtrey}
\nc{\Relwtrey}{Relative \wtrey}

\begin{abstract}
Unifying various generalizations of the important notions of  Reynolds operators, the  relative cocycle weighted Reynolds  operators are studied. Here cocycle weighted means the weight of the operators is given by a 2-cocycle rather than by a scaler as in the classical case. We show that the operators and 2-cocycles uniquely determine each other. We further give a characterization of relative cocycle weighted Reynolds operators  in the context of pre-Lie algebras.   Using a method of Liu,  we construct an explicit graded Lie algebra whose Maurer-Cartan elements are given by a relative cocycle weighted Reynolds operator. This allows us to construct the cohomology for a relative cocycle weighted Reynolds operator. This cohomology can also be seen  as the cohomology of a certain pre-Lie algebra  with coefficients in a suitable representation. Then we consider formal deformations of relative cocycle weighted Reynolds operators from cohomological points of view.  Finally, we introduce the notation of  NS-pre-Lie algebras and show  NS-pre-Lie algebras naturally induce pre-Lie algebras and $L$-dendriform algebras.
\end{abstract}

\subjclass[2020]{
17A32, 
17B38, 
17B99 
}

\keywords{Reynolds operator;  Cohomology;  Formal deformation; NS-pre-Lie algebra.}

\maketitle

\tableofcontents

\vspace{-1cm}

\setcounter{section}{0}

\allowdisplaybreaks

\section{Introduction}

\subsection{Reynolds operators}
The origin of Reynolds operators may be found in the study of fluctuation theory in fluid dynamics which was first introduced by  Reynolds \cite{Rey95} in 1895. More precisely, a Reynolds operator is  a linear operator $P$ on an algebra $R$ of functions satisfying the the approximation identity
\begin{align*}
P(uv)=P(u)P(v)+P((u-P(u))(v-P(v))) \, \text{ for all }\, u, v\in R.
\end{align*}
In 1951,  Kamp$\mathrm{\acute{e}}$ de F$\mathrm{\acute{e}}$riet~\cite{KF}  coined the term Reynolds operator in order to study the operator as a mathematical subject in general for an extended period of time. Later, Dubreil-Jacotin~\cite{DJ57} rewrite the  approximation relation as follows:
\begin{align*}
P(u)P(v)=P(uP(v))+P(P(u)v)-P(P(u)P(v)) \, \text{ for all } \, u, v\in R,
\end{align*}
which is the  Reynolds identity we commonly use.
Since then, researchers found that Reynolds operators have a close  connection with combinatorics, geometry and operads, probability theory, algebras of finitely valued functions, operators on function spaces, and rational $G$-modules~\cite{A11, Chu05,  PN15, Rot64, ZtGG}.

\subsection{Rota-Baxter operators and generalizations }
An operator closely related to the Reynolds operator is the Rota-Baxter operator, which is defined by a  linear operator $P:R\rightarrow R$ that satisfies the Rota-Baxter equation
\begin{align*}
P(u)P(v)=P(uP(v)+P(u)v+\lambda uv) \, \text{ for }\, u, v\in R.
\end{align*}
The Rota-Baxter operator originated from the probability study of  Baxter~\cite{B60}, is in order to understand Spitzer's identity in fluctuation theory. Parallel to the fact that a differential algebra could be considered as an algebraic abstraction of differential equations, the Rota-Baxter algebra could be regarded as an algebraic framework of the integral analysis~\cite{Gub}.

The notion of a Rota-Baxter operator on a pre-Lie algebra was introduced by Hou and Bai~\cite{LHB07} for the study of  bialgebra theory of pre-Lie algebras and $L$-dendriform algebras. More precisely, let $(\mathfrak{g}, \cdot)$ be a pre-Lie algebra and $(V;  \mathcal{L}, \mathcal{R})$ be a representation of it. A linear map $K : V \rightarrow \mathfrak{g}$ is a Rota-Baxter operator on $\mathfrak{g}$ with respect to the representation on $V$ if it satisfies
\begin{align}\label{relative-rota-identity}
    Ku \cdot Kv = K ( \mathcal{L}_{Ku} v + \mathcal{R}_{Kv} u) ~ \text{ for } u, v \in V.
\end{align}
Such operators can be viewed as an analog of a Poisson structure.  Generally, Rota-Baxter operators can be defined on algebraic operads, which give rise to the splitting of operads \cite{BBGN13, PBG17}. See \cite{Gub} for more details.

The interaction between studies in pure mathematics and mathematical physics has long been a rich source of inspiration that benefited both fields. Involving a  closed 3-form on a  smooth manifold, \u{S}evera and Weinstein~\cite{SW01} proposed a twisted Poisson structure, which is described as a Dirac structure in suitable Courant algebroids. By analogy with twisted Poisson structures, Uchino~\cite{U08} proposed a new operator--twisted Rota-Baxter operators, which is a natural generalization of Rota-Baxter operators. To our surprise, the twisted Rota-Baxter operators induce the NS-algebra~\cite{L04} which is a twisted version of dendriform algebras.

 Quite recently the discovery of some new algebraic and geometric structures in physics and
mathematics has led to a renewed interest in the study of twisted or relative algebraic structures and their deformations and homotopy theory~\cite{Das,  DG21, HS21, HS212, LST, THS21, WZ21}. One of the main motivations comes from the deformations and  cohomologies of $\mathcal{O}$-Operators. In~\cite{TBGS19}, by Voronov's derived bracket,  Tang, Bai, Guo and Sheng construct a graded Lie algebra, whose Maurer-Cartan elements are Rota-Baxter operators on Lie algebras. Strongly motivated by this pioneering work, Das~\cite{Da20} introduced twisted Rota-Baxter operators on Lie algebras and considered NS-Lie algebras that are related to twisted Rota-Baxter operators. In the same year, two papers ~\cite{CHMM, HS21} independently introduced another generalization of the twisted Rota-Baxter operator, called the twisted Rota-Baxter operator (twisted $\mathcal{O}$-operator) on 3-Lie algebras,  which produces a 3-Lie algebra with a representation on it.
In~\cite{L20}, Liu showed  that certain twisting transformations on (quasi-)twilled
pre-Lie algbras can be characterized by the solutions of Maurer-Cartan equations of the associated differential graded Lie algebras ($L_{\infty}$-algebras). Furthermore, he showed that Rota-Baxter operators and twisted
Rota-Baxter operators are solutions of the Maurer-Cartan equations.

In this paper, we study a relative cocycle weighted Reynolds  operator$^{1}$ on pre-Lie algebras.\footnotetext{1. We thank professor Li Guo for suggesting this notation.}
Using a method of Liu~\cite{L20}, we construct an explicit graded Lie algebra whose Maurer-Cartan elements are given by relative cocycle weighted Reynolds  operators, which make it possible  to construct the cohomology for a relative cocycle weighted Reynolds  operator. We emphasize that this cohomology can also be seen  as the cohomology of a certain pre-Lie algebra induced by the relative cocycle weighted Reynolds  operator with coefficients in a suitable representation.
As an application, we study deformations of a twisted  Rota-Baxter operator.
We show that the infinitesimal in a formal deformation of a relative cocycle weighted Reynolds  operator is a $1$-cocycle in its cohomology.   The classical deformation theory of Gerstenhaber to relative cocycle weighted Reynolds operators are also discussed.
Finally, we introduce the concept of an NS-pre-Lie algebra. We prove that NS-pre-Lie algebras split pre-Lie algebras and serve as the underlying structure of a twisted  Rota-Baxter operator.
It should be pointed  out that  NS-pre-Lie algebras naturally induce $L$-dendriform algebras.
Further study on NS-pre-Lie algebras is postponed to a forthcoming article.

{\bf Structure of the Paper.}  In Section~\ref{sec:RCWR}, we first recall the concept of a \relwtrey operator on pre-Lie algebras. Then we give a new \relwtrey operator on pre-Lie algebras.

In Section~\ref{sec:MCC}, we construct an explicit graded Lie algebra whose Maurer-Cartan elements are given by \relwtrey operators. This allows us to construct the cohomology for a   \relwtrey operator $K$. This cohomology can also be seen  as the cohomology of a certain pre-Lie algebra induced by $K$ with coefficients in a suitable representation.

In Section~\ref{sec:deform},
we use the cohomology to study deformations of a \relwtrey operator.
We introduce Nijenhuis elements associated with a twisted  Rota-Baxter operator that are obtained from trivial linear deformations.
We also find a sufficient condition for the rigidity of a twisted Rota-Baxter operator in terms of Nijenhuis elements.

In Section~\ref{sec:NS-prelie},  we introduce the concept of  NS-pre-Lie algebras and find its relation with \relwtrey operators. We show  NS-pre-Lie algebras naturally induce pre-Lie algebras and $L$-dendriform algebras.

\smallskip

\noindent
{\bf Notation.}
Throughout this paper, all vector spaces, (multi)linear maps are over the field $\mathbb{C}$ of complex numbers and all the vector spaces are finite dimensional.

\section{Relative cocycle weighted  Reynolds  operators on pre-Lie algebras}\mlabel{sec:RCWR}

In this section, we first recall some basic notions and properties on pre-Lie algebras that will be used in this paper. We then recall the concept of a \relwtrey operator on pre-Lie algebras from~\cite{L20}. Finally, we give a new \relwtrey operator on pre-Lie algebras.

\subsection{Pre-Lie algebras and their Cohomology}

We start with the background of pre-Lie algebras and their cohomology that we refer the reader to~\cite{B06, L20} for more details.

\begin{defn}
A {\bf pre-Lie algebra} is a vector space $\mathfrak{g}$  together with a binary multiplication
$\cdot: \mathfrak{g}\otimes \mathfrak{g} \rightarrow \mathfrak{g}$ such that, for
 $x, y,z \in \mathfrak{g}$, the associator $(x, y, z) = (x\cdot y)\cdot z-x\cdot(y\cdot z)$
is symmetric in $x, y,$ i.e.
\begin{align*}
(x, y, z) = (y, x, z),\, \text{ or  equivalently},\, (x \cdot y) \cdot z - x \cdot (y \cdot z) = (y \cdot x) \cdot z - y \cdot (x \cdot z).
\end{align*}
\end{defn}

Let $(\mathfrak{g}, \cdot)$ be a pre-Lie algebra. Define a new  multiplication on $\mathfrak{g}$ by setting
\begin{align*}
[x, y]_{\mathfrak{g}} = x \cdot y - y \cdot x, \text{ for } x, y \in \mathfrak{g}.
\end{align*}
Then $(\mathfrak{g}, [,]_{\mathfrak{g}})$ is a Lie algebra, which is called the {\bf sub-adjacent Lie algebra} of $(\mathfrak{g}, \cdot)$ and we denote it by $\mathfrak{g}^c$.  Furthermore, $L: \mathfrak{g} \rightarrow gl(\mathfrak{g})$ with $x \mapsto L_x$, where $L_xy = x \cdot y$, for all $x, y \in \mathfrak{g}$, gives a module of the Lie algebra $\mathfrak{g}^c$ of $\mathfrak{g}$.

\begin{defn}\cite{B08}
Let $(\mathfrak{g}, \cdot)$ be a pre-Lie algebra and $V$ a vector space. Let $\mathcal{L}, \mathcal{R} : \mathfrak{g}\rightarrow gl(V)$ be
two linear maps with $x \mapsto \mathcal{L}_x$ and $x \mapsto \mathcal{R}_x$, respectively. Then the triple $(V; \mathcal{L}, \mathcal{R})$ is called a {\bf representation}
over $\mathfrak{g}$ if
\begin{align*}
 \mathcal{L}_x\mathcal{L}_yu - \mathcal{L}_{x\cdot y} u =& \mathcal{L}_y\mathcal{L}_xu - \mathcal{L}_{y\cdot x}u,\\
\mathcal{ L}_x\mathcal{R}_yu - \mathcal{R}_y\mathcal{L}_xu =& \mathcal{R}_{x\cdot y}u - \mathcal{R}_y\mathcal{R}_xu,
\end{align*}
for all  $x, y \in \mathfrak{g}, u \in V$.
\end{defn}

Let $(\mathfrak{g}, \cdot)$ be a pre-Lie algebra and  $(V; \mathcal{L}, \mathcal{R})$ be a representation of it. The  {\bf cohomology} of $\mathfrak{g}$ with coefficients in $V$ is the
cohomology of the cochain complex $\{ C^*(\mathfrak{g}, V), \partial \}$, where $C^n (\mathfrak{g}, V) = $Hom $ ( \mathfrak{g}^{\otimes n}, V ), n \geq 0$, and the
coboundary operator $\partial:C^n(\mathfrak{g}, V ) \rightarrow C^{n+1}(\mathfrak{g}, V )$ given by
\begin{eqnarray*}
&&(\partial f)(x_1, ... , x_{n+1})\\
&=&\sum^{n}_{i=1}(-1)^{i+1}\mathcal{L}_{x_i}f(x_1, ... , \hat{x}_i, ... , x_{n+1}) + \sum^{n}_{i=1}(-1)^{i+1}\mathcal{R}_{x_{n+1}}f(x_1, ... ,\hat{x}_i, ... ,  x_{n}, x_i)\\
&&-\sum^{n}_{i=1}(-1)^{i+1}f(x_1,..., \hat{x}_i, ... , x_n,  x_i\cdot x_{n+1})+\sum_{1\leq i< j\leq n} (-1)^{i+j}f([x_i, x_j]_\mathfrak{g}, x_1, ... , \hat{x}_i, ... , \hat{x}_{j}, ... , x_{n+1}),
\end{eqnarray*}
for $x_1, ..., x_{n+1}\in \mathfrak{g}$. The corresponding cohomology groups are denoted by $H^*(\mathfrak{g}, V).$

A permutation $\sigma \in \mathbb{S}_n$ is called an $(i, n-i)${\bf -unshuffle} if
\begin{align*}
\sigma(1) < \cdots < \sigma(i) \text{ and } \sigma(i + 1) < \cdots <
\sigma(n).
\end{align*}
If $i = 0$ or $i = n$, we assume $\sigma = \id$. The set of all $(i, n-i)$-unshuffles will be denoted by $\mathbb{S}_{(i,n-i)}$. By a similarly way, the notion of an $(i_1, ... , i_k)$-unshuffle and the set $\mathbb{S}_{(i_1, ..., i_k)}$ can also be defined.

Let $\mathfrak{g}$ be a vector space. We consider the graded vector space
 \begin{align*}
C^{\ast}(\mathfrak{g}, \mathfrak{g}) = \bigoplus_{n\geq1}C^n(\mathfrak{g}, \mathfrak{g}) =
\bigoplus_{n\geq1} \Hom (\wedge^{n-1}\mathfrak{g} \otimes \mathfrak{g}, \mathfrak{g}).
\end{align*}
It was shown in \cite{CL01} that $C^{\ast}(\mathfrak{g}, \mathfrak{g})$ equipped with the {\bf Matsushima-Nijenhuis bracket}
\begin{eqnarray*}
[P, Q]^{MN} := P \diamond Q - (-1)^{pq}Q \diamond P \tforall P \in C^{p+1}(\mathfrak{g}, \mathfrak{g}), Q \in C^{q+1}(\mathfrak{g}, \mathfrak{g}),
\end{eqnarray*}
is a graded Lie algebra(gLa), where $P \diamond Q \in C^{p+q+1}(\mathfrak{g}, \mathfrak{g})$ is given by
\begin{eqnarray*}
&&P \diamond Q(x_1, ... , x_{p+q+1})\\
&=& \sum_{\sigma\in \mathbb{S}_{(q, 1, p-1)}}\mathrm{sgn}(\sigma)P(Q(x_{\sigma(1)}, ... , x_{\sigma(q)}, x_{\sigma(q+1)}), x_{\sigma(q+2)}, ...  , x_{\sigma(p+q)}, x_{p+q+1})\\
&&+(-1)^{pq} \sum_{\sigma\in \mathbb{S}_{(p, q)}}\mathrm{sgn}(\sigma) P(x_{\sigma(1)}, ... , x_{\sigma(p)}, Q(x_{\sigma(p+1)}, ... , x_{\sigma(p+q)}, x_{p+q+1})).
\end{eqnarray*}
In particular, $\pi\in\Hom(\ot^2\mathfrak{g}, \mathfrak{g})$ defines a pre-Lie algebra if and only if $[\pi, \pi]^{MN}= 0$. If $\pi$ is a pre-Lie algebra, then $d_\pi( f ):= [\pi, f]^{MN}$ is a graded derivation of the gLa $(C^{\ast}(\mathfrak{g}, \mathfrak{g}), [-, -]^{MN})$ satisfying $d_\pi \circ d_\pi = 0$, which implies  that $(C^{\ast}(\mathfrak{g}, \mathfrak{g}), [-, -]^{MN}, d_\pi)$ is a dgLa.

\subsection{ Relative cocycle weighted (RCW) Reynolds  operators}

Let $(\mathfrak{g}, \cdot)$ be a pre-Lie algebra and $(V; \mathcal{L}, \mathcal{R})$ be a representation of it.
Suppose $H \in C^2 (\mathfrak{g},V)$ is a {\bf $2$-cocycle} in the  cochain complex, i.e., $H : \mathfrak{g} \otimes \mathfrak{g} \rightarrow  V$ is a  bilinear map satisfying
\begin{small}
\begin{eqnarray*}
\mathcal{L}_xH(y, z) - \mathcal{L}_yH(x, z) + \mathcal{R}_zH(y, x)-\mathcal{R}_zH(x, y) - H(y, x\cdot z) +H(x, y\cdot z)- H([x, y]_\mathfrak{_g}, z) =0
\end{eqnarray*}
for $x, y, z \in \mathfrak{g}.$
\end{small}
We now give the following notion generalizating Reynolds operators and their generalizations~\cite{GGL,ZtGG}. It was called an $H$-twisted  Rota-Baxter operator on pre-Lie algebras in~\cite{L20}. We adapt the new term since the term $H(Ku,Kv)$ resembles a Reynolds operator rather than a Rota-Baxter operator.
\begin{defn}\label{def:reypre}
Let $H$ be a $2$-cocycle. A linear map $K : V \rightarrow \mathfrak{g}$ is called a {\bf \relwtrey operator} if $K$ satisfies
\begin{eqnarray*}
Ku \cdot Kv=K(\mathcal{L}_{Ku}v+\mathcal{R}_{Kv}u+H(Ku, Kv)) \tforall u, v\in V.
\end{eqnarray*}
\end{defn}
The  Rota-Baxter operator on pre-Lie algebras studied in~\cite{LHB07} is a \relwtrey operator with weight $H = 0$.

\begin{exam}
Consider  3-dimensional pre-Lie algebra $(\mathfrak{g}, \cdot)$ given with
respect to a basis $\{ e_1, e_2, e_3\}$ whose nonzero products are given as follows:
\begin{eqnarray*}
e_3 \cdot e_3=e_2.
\end{eqnarray*}
Then  $K=\left(
\begin{array}{ccc}
 a_{11} & a_{12}  & a_{13}  \\
  a_{21} & a_{22} & a_{23} \\
   a_{31} & a_{32} & a_{33} \\
   \end{array}
   \right)$
is a \relwtrey operator on $(\mathfrak{g}, \cdot)$ with
respect to the regular representation if and only if
\begin{eqnarray*}
&&Ke_i \cdot Ke_j = K (K e_i \cdot e_j+ e_i \cdot Ke_j +H(Ke_i,Ke_j)), ~ \text{ for } i, j=1, 2, 3.
\end{eqnarray*}
For $i, j=1, 2, 3$, suppose that $H: \mathfrak{g}\ot \mathfrak{g}\rightarrow \mathfrak{g}$ defined by
\begin{eqnarray*}
 H(e_i, e_i)=\left\{
               \begin{array}{ll}
                 e_3, & \hbox{if $i=j=3$;} \\
                 0, & \hbox{otherwise.}
               \end{array}
             \right.
\end{eqnarray*}
A directly calculation shows  that $H$ is a $2$-cocycle.
We have
$$K e_1 \cdot Ke_1=(a_{11}e_1 + a_{21}e_2 + a_{31}e_3)\cdot( a_{11}e_1 + a_{21}e_2 + a_{31}e_3) = a^2_{31}e_2.$$
Moreover,
\begin{eqnarray*}
&&K (K e_1\cdot e_1 +e_1\cdot Ke_1+H(Ke_1, Ke_1))\\
&=&a^2_{31}K (e_3)\\
&=&a^2_{31}a_{13}e_1+ a^2_{31}a_{23}e_2+a^2_{31}a_{33}e_3.
\end{eqnarray*}
Then  it follows from $K e_1 \cdot Ke_1=K (K e_1\cdot e_1 +e_1\cdot Ke_1+H(Ke_1, Ke_1))$  that
\begin{eqnarray*}
&&a^2_{31}a_{13}=0,~~ a^2_{31}=a^2_{31}a_{23},~~ a^2_{31}a_{33}=0.
\end{eqnarray*}
By considering other choices of $e_i$ and $e_j$, we obtain
\begin{eqnarray*}
&&a^2_{32}a_{13}=0,~~ a^2_{32}=a^2_{32}a_{23},~~ a^2_{32}a_{33}=0,\\
&& a^2_{33}a_{13} + 2a_{33}a_{12}=0,~~a^2_{33}=a^2_{33}a_{23}+2a_{33}a_{22},~~a^3_{33}+2a_{33}a_{32}=0,\\
&& a_{31}a_{32}a_{13}=0,~~a_{31}a_{32}=a_{31}a_{32}a_{23},~~a_{31}a_{32}a_{33}=0,\\
&& a_{31}a_{33}a_{13}+a_{31}a_{12}=0, a_{31}a_{33}=  a_{31}a_{33}a_{23}+a_{31}a_{22},~~a_{31}a^2_{33}+a_{31}a_{32}=0,\\
&& a_{32}a_{33}a_{13}+a_{32}a_{12}=0, a_{32}a_{33}=  a_{32}a_{33}a_{23}+a_{32}a_{22},~~a_{32}a^2_{33}+a^2_{32}=0.
\end{eqnarray*}
Summarize the above discussion.
If $a_{31}=a_{32}=a_{33}=0$,  then any
$K=\left(
\begin{array}{ccc}
a_{11} & a_{12} & a_{13}  \\
a_{21} & a_{22} & a_{23} \\
0 & 0 & 0 \\
\end{array}
\right)$  is a  \relwtrey operator  on $(\mathfrak{g}, \cdot)$ with
respect to the regular representation.
\end{exam}

In \cite{L04}, Leroux introduced the notion of $TD$-operators on a unital associative algebra $A$ with unit 1. That is, the
linear map $ \gamma: A \rightarrow A$ is said to be a $TD$-operator if,
\begin{eqnarray*}
\gamma(x)\gamma(y) = \gamma(\gamma(x)y + x\gamma(y) - x\gamma(1)y) \tforall x, y\in A.
\end{eqnarray*}

Motivated by the concept of $TD$-operators, we introduce the notion of a $D$-Reynolds operator on pre-Lie algebras.

\begin{defn}
Let $(\mathfrak{g}, \cdot)$  be a unital pre-Lie algebra with unit $1$ and $D: \mathfrak{g} \rightarrow \mathfrak{g}$ be a linear map.
The linear map $K: \mathfrak{g} \rightarrow \mathfrak{g}$ is said to be a {\bf $D$-Reynolds operator} if,
\begin{eqnarray*}
K(x)\cdot K(y) = K(K(x)\cdot y+x\cdot K(y)- (K(x)\cdot D(1))\cdot K(y))   \tforall x, y\in \mathfrak{g}.
\end{eqnarray*}

\end{defn}
Motivated by the concept of  a Reynolds operator of weight $\lambda$ introduced by Zhang, Gao and  Guo~\cite{ZtGG},  the notion of a Reynolds operator of weight $\lambda$ on pre-Lie algebras can be proposed in the same way.
\begin{defn}
Let $(\mathfrak{g}, \cdot)$  be a unital pre-Lie algebra with unit $1$ and $\lambda\in \mathbb{C}$.
The linear map $K: \mathfrak{g} \rightarrow \mathfrak{g}$ is said to be a  Reynolds operator of weight $\lambda$ if,
\begin{eqnarray}\label{defn: Reynolds operator}
K(x)\cdot K(y) = K(K(x)\cdot y+x\cdot K(y)+ \lambda(K(x)\cdot K(y))   \tforall x, y\in \mathfrak{g}.
\end{eqnarray}
A pre-Lie algebra $(\mathfrak{g}, \cdot)$ with a Reynolds operator  $K$ of weight $\lambda$ is called a
Reynolds pre-Lie algebra of weight $\lambda$, denoted by $(\mathfrak{g}, \cdot, K)$.
\end{defn}

\begin{remark}
If $D(1)=-\lambda$, then  the $D$-Reynolds operator is a Reynolds operator of weight $\lambda$. Particularly, if $D(1)=1$, then  a $D$-Reynolds operator is a Reynolds operator.

\end{remark}
\begin{prop} \label{Prop: new pre-Lie algebra}
	Let $(\frakg, \cdot, K)$ be a Reynolds pre-Lie algebra of weight $\lambda$. Define a new binary operation as:
\begin{eqnarray}\label{defn: Reynolds pre-Lie algebra}
x\star  y:=x\cdot   K(y)+K(x)\cdot  y+\lambda (K(x)\cdot  K(y))
\end{eqnarray}
for any $x, y\in \frakg $. Then
\begin{itemize}

\item[(i)] $K (x)\cdot K( y)= K (x\star  y)$;

\item[(ii)]      $(\frakg ,\star )$ is a new  pre-Lie algebra;

    \item[(iii)] the triple  $(\frakg ,\star ,K)$ also forms a Reynolds pre-Lie algebra  of weight $\lambda$;

\item[(iv)] the map $K:(\frakg ,\star , K)\rightarrow (\frakg,\cdot, K)$ is a  morphism of Reynolds pre-Lie  algebras.
    \end{itemize}
	\end{prop}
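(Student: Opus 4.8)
The plan is to prove the four statements in the order (i), (iv), (iii), (ii), since (i) is the engine that drives everything else. Statement (i) is immediate: by the definition \eqref{defn: Reynolds pre-Lie algebra} of $\star$ and the linearity of $K$,
\begin{align*}
K(x\star y)=K\big(x\cdot K(y)+K(x)\cdot y+\lambda(K(x)\cdot K(y))\big),
\end{align*}
and the argument of $K$ here is exactly that of the Reynolds identity \eqref{defn: Reynolds operator}, so $K(x\star y)=K(x)\cdot K(y)$. Statement (iv) then follows at once, since a morphism of Reynolds pre-Lie algebras is a pre-Lie algebra homomorphism intertwining the two Reynolds operators: the homomorphism property $K(x\star y)=K(x)\cdot K(y)$ is precisely (i), while the intertwining relation $K\circ K=K\circ K$ holds trivially because the source $(\mathfrak{g},\star,K)$ and the target $(\mathfrak{g},\cdot,K)$ carry the same operator $K$.

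For statement (iii) I would verify the Reynolds identity of weight $\lambda$ for the pair $(\star,K)$ directly, using (i) and linearity. Expanding the left-hand side through \eqref{defn: Reynolds pre-Lie algebra} gives
\begin{align*}
K(x)\star K(y)=K^2(x)\cdot K(y)+K(x)\cdot K^2(y)+\lambda\,K^2(x)\cdot K^2(y),
\end{align*}
where $K^2=K\circ K$. On the right-hand side, linearity together with three applications of (i) (to $K(x)\star y$, to $x\star K(y)$, and to $K(x)\star K(y)$) turns each $K$ of a $\star$-product into a $\cdot$-product of $K$-values, yielding exactly the same expression. The two sides agree, so $K$ is again a Reynolds operator of weight $\lambda$ with respect to $\star$.

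Statement (ii) is the substantive step and the main obstacle. I would compute the $\star$-associator $(x\star y)\star z-x\star(y\star z)$ by expanding each iterated $\star$-product via \eqref{defn: Reynolds pre-Lie algebra} and replacing every occurrence of $K(x\star y)$ and $K(y\star z)$ using (i). After collecting the several $\lambda^0$- and $\lambda$-terms, the answer telescopes into $\cdot$-associators,
\begin{align*}
(x\star y)\star z-x\star(y\star z)=(K(x),K(y),z)+(K(x),y,K(z))+(x,K(y),K(z))+2\lambda\,(K(x),K(y),K(z)),
\end{align*}
where $(a,b,c)=(a\cdot b)\cdot c-a\cdot(b\cdot c)$ denotes the associator of $\cdot$. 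It then suffices to check symmetry under $x\leftrightarrow y$. The first and last terms are symmetric because $(a,b,c)$ is symmetric in its first two slots (the pre-Lie axiom for $\cdot$), and the same axiom gives $(K(x),y,K(z))=(y,K(x),K(z))$ and $(x,K(y),K(z))=(K(y),x,K(z))$, so the two middle terms are merely interchanged under $x\leftrightarrow y$. Hence the $\star$-associator is symmetric in $x,y$, and $(\mathfrak{g},\star)$ is pre-Lie. The only genuine difficulty lies in the bookkeeping of this expansion; once the displayed associator identity is in hand, the pre-Lie symmetry is a one-line consequence of the pre-Lie axiom for $\cdot$.
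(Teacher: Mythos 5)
Your proposal is correct and follows essentially the same route as the paper: (i) is read off from the Reynolds identity, (iii) and (iv) are immediate consequences of (i), and (ii) is a direct expansion of the $\star$-products using (i) together with the pre-Lie axiom for $\cdot$. Your only real departure is organizational: you telescope the $\star$-associator into $\cdot$-associators via the (correct) identity $(x\star y)\star z-x\star(y\star z)=(x,K(y),K(z))+(K(x),y,K(z))+(K(x),K(y),z)+2\lambda\,(K(x),K(y),K(z))$ and then invoke symmetry in the first two slots, whereas the paper expands the full four-term combination $(x\star y)\star z-x\star(y\star z)-(y\star x)\star z+y\star(x\star z)$ and cancels it to zero in one block --- the same computation, with yours being the cleaner bookkeeping.
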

\begin{proof}
(i) It follows directly from (\ref{defn: Reynolds operator}).

(ii) For $x, y, z\in \frakg$, by (\ref{defn: Reynolds operator}) and (\ref{defn: Reynolds pre-Lie algebra}), we have
\begin{eqnarray*}
&& (x\star  y)\star z-x\star (y\star  z)-(y\star  x)\star z+y\star (x\star  z)\\
&=&  K (x\star  y)\cdot z+(x\star  y)\cdot K(z)+\lambda(K(x\star  y)\cdot K(z))-K(x)\cdot (y\star  z)\\
&&-x\cdot K(y\star  z)-\lambda(K(x)\cdot K(y\star  z))-K (y\star  x)\cdot z-(y\star  x)\cdot K(z)\\
&&-\lambda(K(y\star  x)\cdot K(z))+K(y)\cdot (x\star  z)+y\cdot K(x\star  z)+\lambda(K(y)\cdot K(x\star  z))\\
&=&(K (x)\cdot K(y))\cdot z+(x\star  y)\cdot K(z)+\lambda((K (x)\cdot K(y))\cdot K(z))-K(x)\cdot (y\star  z)\\
&&-x\cdot (K (y)\cdot K(z))-\lambda(K(x)\cdot (K (y)\cdot K(z)))-(K (y)\cdot K(x))\cdot z-(y\star  x)\cdot K(z)\\
&&-\lambda((K (y)\cdot K(x))\cdot K(z))+K(y)\cdot (x\star  z)+y\cdot (K (x)\cdot K(z))+\lambda(K(y)\cdot (K (x)\cdot K(z)))\\
&=&(K (x)\cdot K(y))\cdot z+(x\cdot   K(y)+K(x)\cdot  y+\lambda (K(x)\cdot  K(y)))\cdot K(z)+\lambda((K (x)\cdot K(y))\cdot K(z))\\
&&-K(x)\cdot (y\cdot   K(z)+K(y)\cdot  z+\lambda (K(y)\cdot  K(z)))-x\cdot (K (y)\cdot K(z))-\lambda(K(x)\cdot (K (y)\cdot K(z)))\\
&&-(K (y)\cdot K(x))\cdot z-(y\cdot   K(x)+K(y)\cdot  x+\lambda (K(y)\cdot  K(x)))\cdot K(z)-\lambda((K (y)\cdot K(x))\cdot K(z))\\
&&+K(y)\cdot (x\cdot   K(z)+K(x)\cdot  z+\lambda (K(x)\cdot  K(z)))+y\cdot (K (x)\cdot K(z))+\lambda(K(y)\cdot (K (x)\cdot K(z)))\\
&=&0.
\end{eqnarray*}
Thus $(\frakg ,\star )$ is a   pre-Lie algebra.

(iii) For $x, y\in \frakg$, by  (\ref{defn: Reynolds pre-Lie algebra}), we have
\begin{eqnarray*}
 K(x)\star K(y)&=&  K(x)\cdot   K^2(y)+K^2(x)\cdot  K(y)+\lambda (K^2(x)\cdot  K^2(y))\\
 &=& K(x\star   K(y)+K(x)\star  y+\lambda (K(x)\star  K(y))),
\end{eqnarray*}
which implies that $K$ is a Reynolds operator of weight $\lambda$ on the pre-Lie algebra $(\frakg ,\star)$.

(iv) $K$ is a pre-Lie algebra homomorphism. Moreover, $K$ commutes with itself. Therefore,
$K$ is a Reynolds pre-Lie algebra homomorphism from $(\frakg ,\star, K)$ to $(\frakg ,\cdot, K)$.
\end{proof}

Next, we illustrate the relationship between Reynolds operators of weight $\lambda$  and derivations on a pre-Lie
algebra.

\begin{prop}
Let $(\frakg, \cdot, K)$ be a Reynolds pre-Lie algebra of weight $\lambda$. If $K$ is
invertible, then $(K^{-1}+\lambda \id): \frakg \rightarrow \frakg$ is a derivation on the pre-Lie algebra $\frakg$, where $\id$ is the identity
operator.
\end{prop}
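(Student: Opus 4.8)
The plan is to use the invertibility of $K$ to transform the defining Reynolds identity \eqref{defn: Reynolds operator} into a linear relation governing $K^{-1}$, and then to verify the Leibniz rule for $D := K^{-1} + \lambda\,\id$ by a direct substitution.

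First I would fix arbitrary elements $a, b \in \frakg$ and write them as $a = K(x)$, $b = K(y)$ with $x = K^{-1}(a)$ and $y = K^{-1}(b)$, which is legitimate precisely because $K$ is invertible. Substituting into \eqref{defn: Reynolds operator} yields
\begin{eqnarray*}
a \cdot b = K\big(K^{-1}(a) \cdot b + a \cdot K^{-1}(b) + \lambda (a \cdot b)\big).
\end{eqnarray*}
Applying $K^{-1}$ to both sides produces the key identity
\begin{eqnarray*}
K^{-1}(a \cdot b) = K^{-1}(a) \cdot b + a \cdot K^{-1}(b) + \lambda (a \cdot b),
\end{eqnarray*}
valid for all $a, b \in \frakg$. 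This is the only place the invertibility hypothesis is genuinely used: it lets us express every element as a $K$-image and then apply $K^{-1}$; without it one would obtain this relation merely on the image of $K$.

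The remaining step is purely computational. Expanding $D(a)\cdot b + a\cdot D(b)$ with $D = K^{-1} + \lambda\,\id$ gives $K^{-1}(a)\cdot b + a\cdot K^{-1}(b) + 2\lambda (a\cdot b)$, while $D(a\cdot b) = K^{-1}(a\cdot b) + \lambda (a\cdot b)$ reduces to exactly the same expression after one application of the key identity. Hence $D(a\cdot b) = D(a)\cdot b + a\cdot D(b)$, so $D = K^{-1}+\lambda\,\id$ is a derivation of the pre-Lie algebra $(\frakg, \cdot)$.

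There is no serious obstacle here; the only point requiring care is the bookkeeping of the weight terms. The single $\lambda(a\cdot b)$ coming from the $\lambda\,\id$ summand of $D$ must combine with the $\lambda(a\cdot b)$ furnished by the key identity so as to match the two $\lambda(a\cdot b)$ contributions generated on the right-hand side of the Leibniz rule.
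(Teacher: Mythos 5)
Your proof is correct and follows essentially the same route as the paper: both use invertibility to rewrite the Reynolds identity as $K^{-1}(a\cdot b) = K^{-1}(a)\cdot b + a\cdot K^{-1}(b) + \lambda(a\cdot b)$ and then verify the Leibniz rule for $K^{-1}+\lambda\,\id$ by matching the $\lambda$-terms. Your write-up is merely more explicit about the substitution $a = K(x)$, $b = K(y)$ and the bookkeeping of the two $\lambda(a\cdot b)$ contributions, which the paper leaves implicit.
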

\begin{proof}
Let $K : \frakg \rightarrow \frakg $ be an invertible Reynolds operator of weight $\lambda$. By (\ref{defn: Reynolds operator}), we have
\begin{eqnarray*}
K^{-1}(x\cdot y) = x\cdot K^{-1}(y)+K^{-1}(x)\cdot y+ \lambda(x\cdot y), \forall x, y \in \frakg
\end{eqnarray*}
which implies that, for all $x, y, z \in \frakg$,
\begin{eqnarray*}
(K^{-1}+\lambda \id)(x\cdot y) = x\cdot (K^{-1}+\lambda \id)(y)+(K^{-1}+\lambda \id)(x)\cdot y, \forall x, y \in \frakg
\end{eqnarray*}
This shows that $(K^{-1}+\lambda \id): \frakg \rightarrow \frakg$ is a derivation on the pre-Lie algebra $\frakg$.
\end{proof}
Conversely, we can derive a Reynolds operator of weight $\lambda$ on a pre-Lie algebra from a derivation.
\begin{prop}
Let $D : \frakg \rightarrow \frakg $ be a derivation on a pre-Lie algebra $(\frakg, \cdot)$. If $D-\lambda \id$ is
invertible, then $(D-\lambda \id)^{-1}: \frakg \rightarrow \frakg$ is a Reynolds operator of weight $\lambda$.
\end{prop}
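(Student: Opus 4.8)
The plan is to recognize this proposition as the exact reversal of the preceding one: there an invertible Reynolds operator $K$ of weight $\lambda$ produced the derivation $K^{-1}+\lambda\id$, and the engine of that proof was the identity
\begin{align*}
K^{-1}(x\cdot y)=x\cdot K^{-1}(y)+K^{-1}(x)\cdot y+\lambda(x\cdot y).
\end{align*}
So my strategy is to set $K:=(D-\lambda\id)^{-1}$, which is a well-defined linear bijection by the hypothesis that $D-\lambda\id$ is invertible, and to run that chain of implications backwards. The first observation is simply that $K^{-1}=D-\lambda\id$.

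First I would reduce the Reynolds identity (\ref{defn: Reynolds operator}) to a statement about $K^{-1}$. Since $K$ is a bijection, proving
\begin{align*}
K(x)\cdot K(y)=K\big(K(x)\cdot y+x\cdot K(y)+\lambda(K(x)\cdot K(y))\big)
\end{align*}
for all $x,y\in\frakg$ is equivalent, after applying $K^{-1}$ to both sides and setting $u=K(x)$, $v=K(y)$ (so that $x=K^{-1}(u)$, $y=K^{-1}(v)$), to the identity
\begin{align*}
K^{-1}(u\cdot v)=u\cdot K^{-1}(v)+K^{-1}(u)\cdot v+\lambda(u\cdot v)\tforall u,v\in\frakg.
\end{align*}
This change of variables is the only bookkeeping step that requires care, and it is precisely the reverse of the substitution used in the forward proposition.

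Finally I would verify this $K^{-1}$-identity by direct expansion. Substituting $K^{-1}=D-\lambda\id$, the left side becomes $D(u\cdot v)-\lambda(u\cdot v)$, which equals $u\cdot D(v)+D(u)\cdot v-\lambda(u\cdot v)$ because $D$ is a derivation; expanding the right side with the same substitution yields $u\cdot(D(v)-\lambda v)+(D(u)-\lambda u)\cdot v+\lambda(u\cdot v)$, which collapses to the same expression. The two sides therefore coincide, establishing the identity and hence the Reynolds identity for $K$. I do not expect any genuine obstacle here: once the equivalence via invertibility of $K$ is correctly set up, the verification is a one-line computation. The real content of the statement is the observation that the assignment $D=K^{-1}+\lambda\id$ of the previous proposition is a bijective correspondence between derivations with $D-\lambda\id$ invertible and invertible Reynolds operators of weight $\lambda$, and this proposition merely records its inverse direction.
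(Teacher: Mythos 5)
Your proposal is correct and takes essentially the same route as the paper: both proofs rest on rewriting the derivation property as the identity $(D-\lambda\id)(u\cdot v)=u\cdot(D-\lambda\id)(v)+(D-\lambda\id)(u)\cdot v+\lambda(u\cdot v)$ and then using invertibility to change variables and recover the Reynolds identity of weight $\lambda$ for the inverse map. The only difference is notational (you call the inverse $K$, the paper calls $D-\lambda\id$ itself $K$) and the order in which the two steps are presented.
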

\begin{proof}
Let $D : \frakg \rightarrow \frakg $ be a derivation on a pre-Lie algebra $(\frakg, \cdot)$, for all $u, v\in \frakg$, we
have
\begin{eqnarray*}
 (D-\lambda \id)(u\cdot v) = u\cdot (D-\lambda \id)(v)+(D-\lambda \id)(u)\cdot v+ \lambda(u\cdot v).
\end{eqnarray*}
For convenience, we denote $K = D - \lambda \id $. If $K$ is invertible, we put $Ku = x, Kv = y$,
we get
\begin{eqnarray*}
K^{-1} (x)\cdot K^{-1}(y) = K^{-1}(K^{-1}(x)\cdot y+x\cdot K^{-1}(y)+ \lambda(K^{-1}(x)\cdot K^{-1}(y))),
\end{eqnarray*}
which implies that $K^{-1}$ is a Reynolds operator of weight $\lambda$. The proof is finished.
\end{proof}

\begin{exam}
Let $(\mathfrak{g}, \cdot)$  be a unital pre-Lie algebra with unit $1$ and $D: \mathfrak{g} \rightarrow \mathfrak{g}$ be a linear map.
If the linear map $K: \mathfrak{g} \rightarrow \mathfrak{g}$ is  a $D$-Reynolds operator, take $H(x, y):=-(x\cdot D(1))\cdot y$, it is easy to check that $H$ is a $2$-cocycle. Then
the linear map $K: \mathfrak{g} \rightarrow \mathfrak{g}$ is a \relwtrey operator with
respect to the regular representation.
\end{exam}

\begin{exam}
Let $(\mathfrak{g}, \cdot)$  be a unital pre-Lie algebra with unit $1$ and $f:V\rightarrow \mathfrak{g}$ be a $\mathfrak{g}$-linear surjection.
We fix an element $e \in V$ such that $f(e)=1$.  Take $H(x, y):=-(x\cdot e)\cdot y$.  Then $f:V\rightarrow \mathfrak{g}$ is a \relwtrey operator.  In fact,
\begin{eqnarray*}
&& f(\mathcal{L}_{f(u)}v+\mathcal{R}_{f(v)}u+H(f(u), f(v)))\\
&=& f(u)\cdot f(v)+f(u)\cdot f(v)-f((f(u)\cdot e)\cdot f(v))\\
&=& f(u)\cdot f(v).
\end{eqnarray*}
For instance, $V=\mathfrak{g}\otimes \mathfrak{g}, f =\mu$ and $e=u \otimes u^{-1}$, where $\mu: \mathfrak{g}\otimes \mathfrak{g}\rightarrow \mathfrak{g}, \mu(x\otimes y)=x\cdot y$ is the multiplication
and $u$ is an invertible element.
\end{exam}

\begin{exam}\cite{L20}
Let $(V; \mathcal{L}, \mathcal{R})$ be a representation of a pre-Lie algebra
$(\mathfrak{g}, \cdot)$.  Suppose that $h \in C^1 (\mathfrak{g}, V)$ is an invertible $1$-cochain
in the  cochain complex of $\mathfrak{g}$ with coefficients in $V$. Take $H = - \partial h$. Then
\begin{eqnarray*}
H(Ku, Kv) = (- \partial h)(Ku, Kv) = - \mathcal{L}_{K(u)}v - \mathcal{R}_{K(v)}u + h(Ku\cdot Kv)  \tforall u, v\in V.
\end{eqnarray*}
This shows that $K = h^{-1}: V \rightarrow  \mathfrak{g}$ is a \relwtrey operator.
\end{exam}

\begin{exam}
Let $(\mathfrak{g}, \cdot)$ be a pre-Lie algebra and $N: \mathfrak{g} \rightarrow \mathfrak{g}$ be a Nijenhuis operator on it, that is, $N$ satisfies
\begin{eqnarray*}
Nx \cdot Ny=N(Nx \cdot y + x\cdot Ny - N(x \cdot y)) \tforall x, y \in \mathfrak{g}.
\end{eqnarray*}
In this case the vector space $\mathfrak{g}$ carries a new pre-Lie algebra structure with a deformed operation
\begin{eqnarray}\label{deformed-leib}
x \cdot_N y = Nx \cdot y + x\cdot Ny - N(x \cdot y) \tforall x, y \in \mathfrak{g}.
\end{eqnarray}
This deformed pre-Lie algebra $\mathfrak{g}_N = (\mathfrak{g}, \cdot_N)$ has a representation on $\mathfrak{g}$ by
\begin{align*}
\mathcal{L}_xu:=Nx \cdot u \,\text{ and }\, \mathcal{R}_xu:=u\cdot Nx  \tforall  x \in \mathfrak{g}_N, u \in \mathfrak{g}.
\end{align*}
With this representation, the map $H: (\mathfrak{g}_N)^{\otimes 2} \rightarrow \mathfrak{g},~H(x, y) = -N(x\cdot_N y)$ is a $2$-cocycle in the cohomology of $\mathfrak{g}_N$ with coefficients in $\mathfrak{g}$. Moreover the identity map $\id: \mathfrak{g} \rightarrow \mathfrak{g}_N$ is
a \relwtrey operator.
\end{exam}

 Suppose that $(V'; \mathcal{L}', \mathcal{R}')$ is a representation of the pre-Lie algebra $(\mathfrak{g}', \cdot')$  and $H' \in C^2(\mathfrak{g}', V')$ is a $2$-cocycle.


\begin{defn}
Let $K : V\rightarrow  \mathfrak{g}$, $K' : V'\rightarrow  \mathfrak{g}'$  be  two \relwtrey operators with weight $H$ and $H'$, respectively.
A {\bf morphism of \relwtrey operators} from $K$ to $K'$ consists of a pair $(\phi, \psi)$ of a pre-Lie algebra morphism $\phi: \mathfrak{g} \rightarrow \mathfrak{g}'$ and a linear map $\psi: V \rightarrow V'$ satisfying
\begin{align*}
\phi\circ K=K'\circ \psi, \,
\psi(\mathcal{L}_xu)=\mathcal{L}'_{\phi(x)}\psi(u),\, \psi(\mathcal{R}_xu)=\mathcal{R}'_{\phi(x)}\psi(u),\,
\psi\circ H = H'\circ(\phi\otimes \phi),
\end{align*}  for  all $x\in \mathfrak{g}, u \in V$.
\end{defn}

Let $H$ be a $2$-cocycle $H$ in the  cochain complex of $\mathfrak{g}$ with coefficients in $V$.
Then the direct sum $\mathfrak{g} \oplus V$ carries a pre-Lie algebra structure with the multiplication
\begin{eqnarray}\label{eq:semipro}
(x,u)\cdot_{H} (y,v)= (x\cdot y, \mathcal{L}_x v + \mathcal{R}_y u + H(x, y) ) \, \text{ for all } \, x, y\in \mathfrak{g}, u, v \in V.
\end{eqnarray}
Denote this $H$-twisted semidirect product pre-Lie algebra by $\mathfrak{g} \ltimes_H V$. Having this twisted semidirect product in hand, we now characterize  \relwtrey operators by their graph.

\begin{prop}\label{graph-twisted}
A linear map $K : V\rightarrow  \mathfrak{g}$ is a \relwtrey operator  if and only if its graph
$Gr(K ) = \{ (Ku,u)|~ u \in V\}$ is a subalgebra of the $H$-twisted semidirect product pre-Lie algebra $\mathfrak{g} \ltimes_H V$.
\end{prop}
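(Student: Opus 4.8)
The plan is to unwind the subalgebra condition for $Gr(K)$ and observe that it is literally the identity defining a \relwtrey operator, so the equivalence is essentially tautological once the twisted product is computed. The starting point is the multiplication~\eqref{eq:semipro} on $\mathfrak{g}\ltimes_H V$: for $u,v\in V$ I would compute
\begin{align*}
(Ku,u)\cdot_H (Kv,v) = \bigl(Ku\cdot Kv,\ \mathcal{L}_{Ku}v+\mathcal{R}_{Kv}u+H(Ku,Kv)\bigr).
\end{align*}
The crucial elementary observation is that a pair $(x,w)\in\mathfrak{g}\oplus V$ belongs to $Gr(K)$ if and only if $x=Kw$; thus in any such pair the $V$-component names the candidate preimage and the $\mathfrak{g}$-component records the constraint to be checked.

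For the forward implication I would assume $K$ satisfies the Reynolds identity of Definition~\ref{def:reypre}. Setting $w:=\mathcal{L}_{Ku}v+\mathcal{R}_{Kv}u+H(Ku,Kv)$, that identity gives $Ku\cdot Kv=Kw$, so the product above equals $(Kw,w)\in Gr(K)$; hence $Gr(K)$ is closed under $\cdot_H$. For the converse I would assume $Gr(K)$ is a subalgebra, so the same product lies in $Gr(K)$ and therefore equals $(Kw,w)$ for some $w\in V$. Matching the $V$-components forces $w=\mathcal{L}_{Ku}v+\mathcal{R}_{Kv}u+H(Ku,Kv)$, and matching the $\mathfrak{g}$-components then yields exactly $Ku\cdot Kv=K(\mathcal{L}_{Ku}v+\mathcal{R}_{Kv}u+H(Ku,Kv))$, the Reynolds identity. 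As $u,v$ are arbitrary, $K$ is a \relwtrey operator.

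I do not expect any genuine obstacle in this argument, since both directions reduce to reading off the two components of a single product. The only point worth stating explicitly is that $Gr(K)$ is automatically a linear subspace of $\mathfrak{g}\oplus V$, being the image of the linear map $u\mapsto(Ku,u)$; consequently, to verify that $Gr(K)$ is a subalgebra it suffices to check closure under the multiplication $\cdot_H$, which is precisely what the computation above establishes.
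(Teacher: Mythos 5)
Your proof is correct and follows essentially the same route as the paper, which simply states that the result ``follows from Definition~\ref{def:reypre} and Eq.~(\ref{eq:semipro})''; your write-up just makes explicit the one-line computation of $(Ku,u)\cdot_H(Kv,v)$ and the component-matching that the paper leaves to the reader.
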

\begin{proof}
It follows from Definition~\ref{def:reypre} and Eq.~(\ref{eq:semipro}).
\end{proof}

\begin{prop}\label{induced-leib}
Let $K : V\rightarrow  \mathfrak{g}$ be a \relwtrey operator. Then the vector space $V$ carries a pre-Lie algebra structure with the  multiplication
\begin{eqnarray*}
u \cdot_K v:= \mathcal{L}_{Ku} v + \mathcal{R}_{Kv} u + H(K u, Kv)\, \text{ for }\, u, v \in  V.
\end{eqnarray*}
\end{prop}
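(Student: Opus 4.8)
The plan is to avoid a direct (and lengthy) verification of the pre-Lie identity for $\cdot_K$ by transporting the pre-Lie structure from the $H$-twisted semidirect product $\mathfrak{g} \ltimes_H V$ along the graph of $K$. Recall that, because $H$ is a $2$-cocycle, $\mathfrak{g} \ltimes_H V$ with the multiplication \eqref{eq:semipro} is a pre-Lie algebra, and that by Proposition~\ref{graph-twisted} the graph $Gr(K) = \{(Ku, u)\mid u \in V\}$ is a subalgebra of it. Since a subalgebra of a pre-Lie algebra is again a pre-Lie algebra (the associator symmetry is universally quantified, hence inherited), it suffices to identify $(V, \cdot_K)$ with $Gr(K)$ as algebras.

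First I would introduce the linear map $\iota : V \to \mathfrak{g} \oplus V$, $\iota(u) = (Ku, u)$, which is injective with image exactly $Gr(K)$. The key computation is to expand $\iota(u) \cdot_H \iota(v)$ using \eqref{eq:semipro}:
\begin{align*}
\iota(u) \cdot_H \iota(v) = (Ku, u) \cdot_H (Kv, v) = \big( Ku \cdot Kv,\ \mathcal{L}_{Ku} v + \mathcal{R}_{Kv} u + H(Ku, Kv) \big).
\end{align*}
The second component is precisely $u \cdot_K v$, while the defining identity of a \relwtrey operator (Definition~\ref{def:reypre}) says exactly that the first component equals $K(\mathcal{L}_{Ku}v + \mathcal{R}_{Kv} u + H(Ku, Kv)) = K(u \cdot_K v)$. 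Hence
\begin{align*}
\iota(u) \cdot_H \iota(v) = \big( K(u \cdot_K v),\ u \cdot_K v \big) = \iota(u \cdot_K v),
\end{align*}
so $\iota$ intertwines $\cdot_K$ with the multiplication $\cdot_H$ of $\mathfrak{g} \ltimes_H V$.

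It then remains to pull the pre-Lie identity back along $\iota$. Writing $(a, b, c)_\ast$ for the associator of a multiplication $\ast$, multiplicativity of $\iota$ gives $\iota\big((u, v, w)_{\cdot_K}\big) = (\iota(u), \iota(v), \iota(w))_{\cdot_H}$ for all $u, v, w \in V$. Since $\mathfrak{g} \ltimes_H V$ is pre-Lie, its associator is symmetric in the first two arguments, so $(\iota(u), \iota(v), \iota(w))_{\cdot_H} = (\iota(v), \iota(u), \iota(w))_{\cdot_H}$, and therefore $\iota\big((u, v, w)_{\cdot_K}\big) = \iota\big((v, u, w)_{\cdot_K}\big)$. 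Because $\iota$ is injective, I conclude $(u, v, w)_{\cdot_K} = (v, u, w)_{\cdot_K}$, i.e. $\cdot_K$ is a pre-Lie multiplication.

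The only genuine content is the single line where the \relwtrey identity is invoked to show that the first component of $\iota(u) \cdot_H \iota(v)$ lands back in $K(V)$ with the right second component; everything else is transport of structure. A purely computational alternative would expand both sides of the associator symmetry directly, using the two representation axioms together with the $2$-cocycle equation for $H$ — but this is exactly the computation already encoded in the pre-Lie identity for $\mathfrak{g} \ltimes_H V$, so the graph argument is strictly cleaner. The main point to be careful about is that one genuinely needs the $2$-cocycle hypothesis: it enters precisely in making \eqref{eq:semipro} pre-Lie, which is what legitimizes the whole transport.
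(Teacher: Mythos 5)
Your proposal is correct and follows essentially the same route as the paper: the paper's proof also invokes Proposition~\ref{graph-twisted} together with the vector-space isomorphism $V \cong Gr(K)$, $u \mapsto (Ku,u)$, to transport the pre-Lie structure of $\mathfrak{g} \ltimes_H V$ onto $V$. Your write-up simply makes explicit the intertwining computation $\iota(u)\cdot_H \iota(v) = \iota(u\cdot_K v)$ and the pullback of the associator symmetry, which the paper leaves implicit.
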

\begin{proof}
It follows from Proposition~\ref{graph-twisted} and the fact that $Gr (K)$ is isomorphic to $V$ as a vector space.
\end{proof}


 We now proceed to construct a new \relwtrey operator.

\begin{prop}\label{prop-new-con}
Let $(V; \mathcal{L}, \mathcal{R})$ be a representation of a pre-Lie algebra $(\mathfrak{g}, \cdot)$. For any $2$-cocycle $H \in C^2(\mathfrak{g}, V)$ and $1$-cochain $h \in C^1 (\mathfrak{g}, V)$, the twisted semidirect product pre-Lie algebras $\mathfrak{g} \ltimes_H V$ and $\mathfrak{g} \ltimes_{H + \partial h} V$ are isomorphic.
\end{prop}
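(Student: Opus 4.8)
The plan is to exhibit an explicit isomorphism obtained by ``shifting along the $1$-cochain $h$''. Before constructing it, I would first record that $\mathfrak{g} \ltimes_{H + \partial h} V$ is genuinely a pre-Lie algebra: since $H$ is a $2$-cocycle and the coboundary operator satisfies $\partial \circ \partial = 0$, the bilinear map $H + \partial h$ is again a $2$-cocycle, so Eq.~(\ref{eq:semipro}) with weight $H + \partial h$ does define a pre-Lie multiplication. Thus both sides of the claimed isomorphism are legitimate pre-Lie algebras, and it remains to connect them.

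Next I would define the candidate map
$$\Phi : \mathfrak{g} \ltimes_{H+\partial h} V \longrightarrow \mathfrak{g} \ltimes_H V, \qquad \Phi(x, u) = (x,\, u + h(x)),$$
for $x \in \mathfrak{g},\, u \in V$. This $\Phi$ is visibly linear, and it is a bijection with inverse $\Phi^{-1}(x, u) = (x,\, u - h(x))$; hence the only substantive point is to check that $\Phi$ is multiplicative, i.e. that it intertwines $\cdot_{H + \partial h}$ with $\cdot_H$.

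The key step is this multiplicativity check, which I expect to collapse to the defining formula of $\partial h$. Computing the two sides, applying $\Phi$ to the product gives
$$\Phi\big((x,u)\cdot_{H+\partial h}(y,v)\big) = \big(x\cdot y,\ \mathcal{L}_x v + \mathcal{R}_y u + (H+\partial h)(x,y) + h(x\cdot y)\big),$$
while expanding the product of the images yields
$$\Phi(x,u)\cdot_H \Phi(y,v) = \big(x\cdot y,\ \mathcal{L}_x v + \mathcal{R}_y u + \mathcal{L}_x h(y) + \mathcal{R}_y h(x) + H(x,y)\big).$$
Equating the second components, the required identity is $(\partial h)(x,y) + h(x\cdot y) = \mathcal{L}_x h(y) + \mathcal{R}_y h(x)$, which is exactly the value $(\partial h)(x,y) = \mathcal{L}_x h(y) + \mathcal{R}_y h(x) - h(x\cdot y)$ of the coboundary of a $1$-cochain read off from the formula for $\partial$. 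Therefore $\Phi$ is an algebra isomorphism, proving the claim.

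The main (indeed the only) obstacle is guessing the correct shift and its direction; once $\Phi(x,u) = (x, u + h(x))$ is in hand, the verification is a one-line expansion that reduces to the definition of $\partial h$, with no genuine computation beyond that. I would close by remarking that, combined with Proposition~\ref{graph-twisted}, this isomorphism transports graphs to graphs and so furnishes a correspondence between \relwtrey operators of weight $H$ and those of weight $H + \partial h$, which is the intended payoff of the construction.
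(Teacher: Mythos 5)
Your proposal is correct and is essentially the paper's own proof: the paper defines $\Psi_h(x,u)=(x,\,u-h(x))$ from $\mathfrak{g}\ltimes_H V$ to $\mathfrak{g}\ltimes_{H+\partial h}V$ and verifies multiplicativity by the same one-line reduction to the formula $\partial h(x,y)=\mathcal{L}_x h(y)+\mathcal{R}_y h(x)-h(x\cdot y)$, so your $\Phi$ is just its inverse, an immaterial difference of direction. Your preliminary observation that $H+\partial h$ is again a $2$-cocycle (so the target is a genuine pre-Lie algebra) is a small point the paper leaves implicit, and is a welcome addition.
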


\begin{proof}
Define a map
\begin{align*}
\Psi_h: \mathfrak{g} \ltimes_H V \rightarrow \mathfrak{g} \ltimes_{H + \partial h} V, \quad  (x,u) \mapsto (x,u - h(x))  \tforall  (x,u) \in \mathfrak{g} \ltimes_H V
\end{align*}
 to be an isomorphism of the underlying vector spaces.  Further, we have
\begin{align*}
 \Psi_h((x,u)\cdot_{H} (y,v))
=&\ \Psi_h(x\cdot y,\mathcal{L}_xv+\mathcal{R}_yu+H(x, y))\\
=&\ (x\cdot y,\mathcal{L}_xv + \mathcal{R}_yu + H(x, y) - h(x\cdot y))\\
=&\ (x\cdot y,\mathcal{L}_xv + \mathcal{R}_yu + H(x, y) - \mathcal{L}_xh(y) - \mathcal{R}_yh(x) + \partial h(x, y) )\\
=&\ (x,u-h(x))\cdot_{H +\partial h} (y,v-h(y))\\
=&\ \Psi_h(x,u)\cdot_{H +\partial h} \Psi_h(y,v).
\end{align*}
Then $\Psi_h$ is an isomorphism of pre-Lie algebras.
\end{proof}

\begin{prop}
Let $K : V \rightarrow \mathfrak{g}$ be a \relwtrey operator and $h$ a 1-cochain in $C^1(\mathfrak{g}, V)$.
If the linear map $(\id_V - h \circ K) : V \rightarrow V$ is invertible, then the map $K \circ (\id_V - h \circ K)^{-1} : V \rightarrow \mathfrak{g}$ is a relative cocycle Reynolds operator of weight  $(H + \partial h)$.
\end{prop}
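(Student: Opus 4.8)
The plan is to deduce the result from the graph characterization of \relwtrey operators (Proposition~\ref{graph-twisted}) together with the isomorphism $\Psi_h$ constructed in Proposition~\ref{prop-new-con}, rather than by a head-on computation. First I would record that $H + \partial h$ is again a $2$-cocycle: since $H$ is a $2$-cocycle and $\partial \circ \partial = 0$, the coboundary $\partial h$ is automatically a $2$-cocycle, so the weight $H + \partial h$ makes sense and the notion of a \relwtrey operator of weight $H + \partial h$ is well defined.

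Write $\tilde{K} := K \circ (\id_V - h \circ K)^{-1}$. Because $K$ is a \relwtrey operator of weight $H$, Proposition~\ref{graph-twisted} tells us that its graph $Gr(K) = \{(Ku, u) \mid u \in V\}$ is a subalgebra of $\mathfrak{g} \ltimes_H V$. Applying the pre-Lie algebra isomorphism
$$\Psi_h : \mathfrak{g} \ltimes_H V \rightarrow \mathfrak{g} \ltimes_{H + \partial h} V, \quad (x, u) \mapsto (x, u - h(x)),$$
we obtain that $\Psi_h(Gr(K))$ is a subalgebra of $\mathfrak{g} \ltimes_{H + \partial h} V$.

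The key step is to identify this image with $Gr(\tilde{K})$. For $(Ku, u) \in Gr(K)$ we compute $\Psi_h(Ku, u) = (Ku, u - h(Ku)) = (Ku, (\id_V - h\circ K)(u))$. Here the invertibility hypothesis enters: setting $w := (\id_V - h \circ K)(u)$, the assignment $u \mapsto w$ is a bijection of $V$, and $Ku = K(\id_V - h\circ K)^{-1}(w) = \tilde{K}(w)$. Hence $\Psi_h(Gr(K)) = \{(\tilde{K}(w), w) \mid w \in V\} = Gr(\tilde{K})$. Since this is a subalgebra of $\mathfrak{g} \ltimes_{H + \partial h} V$, a second application of Proposition~\ref{graph-twisted}, now with weight $H + \partial h$, shows that $\tilde{K}$ is a \relwtrey operator of weight $H + \partial h$, as claimed.

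I do not expect a genuine obstacle here; the only point requiring care is the re-indexing of the graph, where the invertibility of $\id_V - h \circ K$ is exactly what guarantees that $\Psi_h(Gr(K))$ is itself the graph of a single-valued, everywhere-defined linear map. As a cross-check one may instead argue directly: writing $u = (\id_V - h\circ K)(a)$ and $v = (\id_V - h \circ K)(b)$, so that $\tilde{K}u = Ka$ and $\tilde{K}v = Kb$, the defining identity for $K$ together with the formula $\partial h(x,y) = \mathcal{L}_x h(y) + \mathcal{R}_y h(x) - h(x \cdot y)$ make the $h$-terms cancel and collapse both sides of the defining identity for $\tilde{K}$ to $Ka \cdot Kb$.
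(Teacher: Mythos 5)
Your proposal is correct and follows essentially the same route as the paper's own proof: both apply Proposition~\ref{graph-twisted} to $Gr(K)$, push it through the isomorphism $\Psi_h$ of Proposition~\ref{prop-new-con}, identify $\Psi_h(Gr(K))$ with the graph of $K \circ (\id_V - h\circ K)^{-1}$ via the invertibility hypothesis, and conclude with a second application of Proposition~\ref{graph-twisted} in weight $H+\partial h$. Your added remarks (that $H+\partial h$ is again a $2$-cocycle, and the explicit re-indexing of the graph) merely spell out details the paper leaves implicit.
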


\begin{proof}
Let $Gr(K)$ be a subalgebra of the $H$-twisted semidirect product  $\mathfrak{g} \ltimes_H V$.  Then by Proposition \ref{prop-new-con},  we have
\begin{align*}
\Psi_h (Gr(K))=\{ (K u,u-hK(u))|~u \in V\} \subset \mathfrak{g} \ltimes_{H + \partial h} V
\end{align*}
is a subalgebra. Since the map $(\id_V -h \circ K ): V\rightarrow V$ is invertible, we have $\Psi_h (Gr(K))$ is the graph of the linear map $K \circ (\id_V - h \circ K)^{-1}$. It follows from Proposition \ref{graph-twisted} that the map $K \circ (\id_V - h \circ K)^{-1}$ is a relative cocycle Reynolds  operator of weight $(H + \partial h)$.
\end{proof}

\begin{defn}
Let $K : V \rightarrow \mathfrak{g}$ be a \relwtrey operator. Suppose $B \in C^1(\mathfrak{g}, V)$ is a $1$-cocycle in the cochain complex of $\mathfrak{g}$ with coefficients in $V$. Then $B$ is  {\bf $K$-admissible} if the linear map $(\id_V + B \circ K ) : V \rightarrow V$ is invertible.
\end{defn}

\begin{prop}
Let $K : V \rightarrow \mathfrak{g}$ be a \relwtrey operator and $B \in C^1 (\mathfrak{g}, V)$  a $K$-admissible $1$-cocycle. Then the map $K \circ (\id_V + B \circ K )^{-1}: V\rightarrow g$ is a \relwtrey  operator.
\end{prop}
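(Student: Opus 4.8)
The plan is to derive this directly from the immediately preceding proposition, the only new ingredient being the cocycle hypothesis on $B$. First I would set $h := -B \in C^1(\mathfrak{g}, V)$, so that $\id_V - h \circ K = \id_V + B \circ K$. The $K$-admissibility of $B$ is precisely the requirement that this map be invertible, so the hypotheses of the previous proposition are met for this choice of $h$. That proposition then asserts that $K \circ (\id_V - h \circ K)^{-1} = K \circ (\id_V + B \circ K)^{-1}$ is a \relwtrey operator whose weight is the $2$-cochain $H + \partial h = H - \partial B$.

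The second (and essentially only) step is to use that $B$ is a $1$-cocycle, i.e. $\partial B = 0$. Hence the new weight $H - \partial B$ is equal to $H$, and $K \circ (\id_V + B \circ K)^{-1}$ is therefore a \relwtrey operator with the \emph{same} $2$-cocycle $H$ as its weight. This is exactly the assertion. The conceptual content is that the weight of the gauge-transformed operator shifts by the exact coboundary $\partial h$, so demanding that the transforming cochain $h = -B$ be a cocycle is precisely what keeps the weight fixed at $H$; there is no real obstacle beyond matching the sign convention $h = -B$ between the two statements.

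If one prefers a self-contained argument rather than invoking the previous proposition, I would reproduce its graph-theoretic proof with $h = -B$. Since $Gr(K)$ is a subalgebra of $\mathfrak{g} \ltimes_H V$ by Proposition~\ref{graph-twisted}, applying the isomorphism $\Psi_{-B}$ of Proposition~\ref{prop-new-con} carries it to the subalgebra $\{ (Ku,\, u + B(Ku)) \mid u \in V \}$ of $\mathfrak{g} \ltimes_{H + \partial(-B)} V$. Because $\partial B = 0$, the target is literally $\mathfrak{g} \ltimes_H V$, so no change of weight occurs. By invertibility of $\id_V + B \circ K$, this subalgebra is the graph of the map $K \circ (\id_V + B \circ K)^{-1}$ (writing $v = (\id_V + B\circ K)(u)$), and Proposition~\ref{graph-twisted} then identifies that map as a \relwtrey operator, completing the proof.
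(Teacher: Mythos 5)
Your proposal is correct, and it takes a genuinely shorter route than the paper. The paper proves this proposition by a direct computation: it introduces the deformed graph $\tau_B(Gr(K)) = \{(Ku,\, u + B\circ K(u)) \mid u \in V\} \subset \mathfrak{g}\ltimes_H V$, multiplies two of its elements under $\cdot_H$, and uses the $1$-cocycle identity $\mathcal{L}_{Ku}(B(Kv)) + \mathcal{R}_{Kv}(B(Ku)) = B(Ku\cdot Kv)$ inside that computation to show the product stays in $\tau_B(Gr(K))$; it then concludes via the graph characterization (Proposition~\ref{graph-twisted}) once invertibility of $\id_V + B\circ K$ identifies $\tau_B(Gr(K))$ as the graph of $K\circ(\id_V + B\circ K)^{-1}$. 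You instead deduce the statement as an immediate corollary of the preceding gauge/weight-shift proposition by setting $h = -B$: admissibility gives invertibility of $\id_V - h\circ K = \id_V + B\circ K$, and linearity of $\partial$ together with $\partial B = 0$ forces the shifted weight $H + \partial h = H - \partial B$ to equal $H$. This is sound --- your sign bookkeeping for $\Psi_{-B}$ is right, and your unwound version recovers exactly the paper's set $\tau_B(Gr(K))$, with the subalgebra property now free because $\Psi_{-B}$ is an algebra isomorphism rather than re-verified by hand. What your route buys is economy and a conceptual explanation the paper leaves implicit: the cocycle condition on $B$ is precisely the condition that the gauge transformation does not shift the weight. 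What the paper's route buys is a self-contained verification that displays concretely where the $1$-cocycle identity enters the semidirect-product multiplication; both arguments rest on the same final appeal to Proposition~\ref{graph-twisted}.
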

\begin{proof}
Consider the deformed subspace
\begin{eqnarray*}
\tau_B (Gr(K)):=\{ (K u, u + B\circ K(u))|~u \in V\} \subset \mathfrak{g} \ltimes_{H} V.
\end{eqnarray*}
We have
\begin{align*}
&\ (Ku,u+B\circ K(u))\cdot_H (Kv,v+B\circ K(v))\\
=&\ (Ku \cdot Kv, \mathcal{L}_{Ku} (v+B\circ K(v)) + \mathcal{R}_{Kv} (u+B\circ K(u)) + H(K u, Kv))\\
=&\ (Ku \cdot Kv, \mathcal{L}_{Ku} (v)+ \mathcal{R}_{Kv} (u)+ H(K u, Kv)+\mathcal{L}_{Ku} (B\circ K(v))+\mathcal{R}_{Kv} (B\circ K(u)))\\
=&\ (Ku \cdot Kv, \mathcal{L}_{Ku} (v)+ \mathcal{R}_{Kv} (u)+ H(K u, Kv)+B(Ku \cdot Kv))\quad \text{(by $B$ being a $1$-cocycle)}\\
=&\ (K(\mathcal{L}_{Ku}v+\mathcal{R}_{Kv}u+H(Ku, Kv)), \mathcal{L}_{Ku} (v)+ \mathcal{R}_{Kv} (u)+ H(K u, Kv)+B(Ku \cdot Kv))
\in  \tau_B (Gr(K))
\end{align*}
Then $\tau_B (Gr (K)) \subset \mathfrak{g} \ltimes_{H} V$  is a subalgebra. Moreover, if the bundle map the map $(\id_V + B \circ K )$ is invertible,
then $\tau_B (Gr (K))$ is the graph of the map $K \circ (\id_V + B \circ K )^{-1}$. Thus  the result follows from Proposition \ref{graph-twisted}.
\end{proof}

The \relwtrey operator in above proposition is called the gauge
transformation of $K$ associated with $B$. We denote it by $K_B$.

\begin{prop}
Let $K$ be a \relwtrey operator and $B$ be a $K$-admissible $1$-cocycle. Then
the pre-Lie algebras on $V$ induced by $K$ and $K_B$ are isomorphic.
\end{prop}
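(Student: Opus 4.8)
The plan is to exhibit the invertible map $\theta := \id_V + B\circ K : V \to V$ as the desired isomorphism from $(V,\cdot_K)$ to $(V,\cdot_{K_B})$, and to obtain it through the graph picture rather than by a direct term-by-term computation. The point is that every ingredient has already been assembled in Propositions~\ref{graph-twisted}, \ref{induced-leib} and \ref{prop-new-con}, so the argument becomes a short diagram chase once the right automorphism is identified.

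First I would observe that the linear map $\tau_B : \mathfrak{g}\ltimes_H V \to \mathfrak{g}\ltimes_H V$, $(x,u)\mapsto (x, u + B(x))$, used to build $K_B$, is exactly the map $\Psi_{-B}$ of Proposition~\ref{prop-new-con} taken with $h=-B$. Since $B$ is a $1$-cocycle we have $\partial B = 0$, hence $H + \partial(-B) = H$, so $\Psi_{-B}=\tau_B$ is an \emph{automorphism} of the single pre-Lie algebra $\mathfrak{g}\ltimes_H V$, not merely an isomorphism onto a different twisted semidirect product. This is the step where the cocycle hypothesis on $B$ is genuinely used, and it is the main obstacle: everything downstream relies on $\tau_B$ preserving the multiplication $\cdot_H$, which in turn rests on the weight being unchanged because $\partial B=0$.

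Next I would recall, from the construction of $K_B$, that $\tau_B$ carries the subalgebra $Gr(K)=\{(Ku,u)\mid u\in V\}$ onto $Gr(K_B)=\{(K_B w, w)\mid w\in V\}$. Indeed, using $K_B\circ\theta = K$ one computes $\tau_B(Ku,u)=(Ku,\, u+B(Ku))=(Ku,\theta(u))=(K_B(\theta(u)),\theta(u))$, so $\tau_B$ restricts to a pre-Lie isomorphism $Gr(K)\to Gr(K_B)$.

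Finally I would assemble the isomorphism. The inclusions $\iota_K: u\mapsto (Ku,u)$ and $\iota_{K_B}: w\mapsto(K_B w, w)$ are pre-Lie isomorphisms of $(V,\cdot_K)$ and $(V,\cdot_{K_B})$ onto $Gr(K)$ and $Gr(K_B)$ respectively; this is the content of Propositions~\ref{graph-twisted} and~\ref{induced-leib}, and follows from the Reynolds identity exactly as in the proof of Proposition~\ref{induced-leib}. The computation in the previous paragraph reads precisely $\tau_B\circ\iota_K = \iota_{K_B}\circ\theta$, so $\theta = \iota_{K_B}^{-1}\circ\tau_B\circ\iota_K$ is a composite of pre-Lie isomorphisms. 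Hence $\theta=\id_V+B\circ K$ is the required isomorphism $(V,\cdot_K)\to(V,\cdot_{K_B})$, with no need to verify the pre-Lie morphism identity by hand.
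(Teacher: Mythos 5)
Your proof is correct, but it takes a genuinely different route from the paper's. The paper verifies the morphism identity by a direct computation: writing $\theta = \id_V + B\circ K$, it expands $\theta(u)\cdot_{K_B}\theta(v)$, uses $K_B\circ\theta=K$ to reduce this to $\mathcal{L}_{Ku}\theta(v)+\mathcal{R}_{Kv}\theta(u)+H(Ku,Kv)$, then applies the $1$-cocycle identity $\mathcal{L}_{Ku}B(Kv)+\mathcal{R}_{Kv}B(Ku)=B(Ku\cdot Kv)$ and the Reynolds identity to collapse everything to $\theta(u\cdot_K v)$. You instead lift the whole problem to the twisted semidirect product: the observation that $\tau_B=\Psi_{-B}$ is an \emph{automorphism} of $\mathfrak{g}\ltimes_H V$ (because $\partial B=0$ leaves the weight unchanged) packages the cocycle computation once and for all inside Proposition~\ref{prop-new-con}, and the intertwining relation $\tau_B\circ\iota_K=\iota_{K_B}\circ\theta$ then exhibits $\theta$ as a composite of pre-Lie isomorphisms. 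Both arguments use the cocycle hypothesis at the same underlying point — yours hides it in $\Psi_{-B}$ preserving $\cdot_H$, the paper's displays it explicitly — and both produce the same map $\theta$. What your approach buys: it is computation-free at this stage, it explains conceptually that gauge transformation is conjugation of graphs by an ambient automorphism, and as a byproduct it re-proves the preceding proposition (that $\tau_B(Gr(K))$ is a subalgebra, hence $K_B$ is a \relwtrey operator), which the paper also established by hand. What the paper's approach buys: it is self-contained and makes no appeal to the fact that the graph embeddings are isomorphisms onto their images. On that last point, you correctly note it is implicit rather than stated: Proposition~\ref{induced-leib} only asserts that $V$ carries the structure $\cdot_K$, and the statement you actually need, namely $\iota_K(u)\cdot_H\iota_K(v)=\iota_K(u\cdot_K v)$, is the one-line Reynolds-identity computation $(Ku,u)\cdot_H(Kv,v)=(K(u\cdot_K v),\,u\cdot_K v)$; it would strengthen your write-up to display it, since it is the only place the Reynolds identity for $K$ and for $K_B$ enters your argument.
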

\begin{proof}
 We now consider the linear isomorphism $(\id_V + B \circ K) : V \rightarrow V$. Further, for any $u, v \in V$, we have
\begin{align*}
&\ (\id_V + B \circ K )(u)\cdot_{K_B} (\id_V + B \circ K )(v)\\
=&\ \mathcal{L}_{K(u)} (\id_V + B \circ K )(v) +\mathcal{R}_{K (v)}(\id_V + B \circ K )(u) + H(K u, Kv)\\
=&\  \mathcal{L}_{K(u)}v+ \mathcal{R}_{K(v)}u+ \mathcal{L}_{K(u)} (B \circ K(v)) + \mathcal{R}_{K (v)} (B \circ K(u)) + H(K u, Kv)\\
=&\ \mathcal{L}_{K(u)}v + \mathcal{R}_{K (v)}u + B(Ku \cdot Kv) + H(K u, Kv)\\
=&\ u \cdot_K v+B\circ K(u \cdot_K v)\\
=&\ (\id_V + B \circ K)(u \cdot_K v),
\end{align*}
which implies that $(\id_V + B \circ K) : (V, \cdot_K) \rightarrow (V, \cdot_{K_B})$ is a pre-Lie algebra isomorphism.
\end{proof}

\section{  Maurer-Cartan characterization of RCW Reynolds  operators and  Cohomology}\mlabel{sec:MCC}

In this section, we construct a graded Lie algebra for which the solutions of the Maurer-Cartan equation are precisely  the \relwtrey operators. This allows us to construct a cohomology for a \relwtrey operator $K$. This cohomology can also be seen  as  the  cohomology of the pre-Lie algebra $(V, \cdot_K )$ constructed in Proposition \ref{induced-leib} with coefficients in a suitable representation on $\mathfrak{g}$.

\subsection{$L_{\infty}$-algebras and Maurer-Cartan
elements}
The notion of an $L_{\infty}$-algebras was introduced  by Stasheff in \cite{S92}.   Let $V$ be a graded vector space and $TV$ be its tensor algebra over $V$. We define the graded
skew-symmetric algebra over $V$ by
\begin{eqnarray*}
\wedge V := TV/\langle u \otimes v- (-1)^{|u||v|}v \otimes u\rangle,
\end{eqnarray*}
where $|u|$ and $|v|$ denote the degree of homogeneous elements $u, v \in V$. For any homogeneous
elements $v_1, v_2, ..., v_k$ and a permutation $\sigma \in \mathbb{S}_k$, the Koszul signs $\chi(\sigma)$ is defined by
\begin{eqnarray*}
x_{\sigma(1)} \wedge x_{\sigma(2)} \wedge \cdots \wedge x_{\sigma(k)} =\chi(\sigma)x_1 \wedge x_2 \wedge \cdots \wedge x_k.
\end{eqnarray*}
\begin{defn}
An {\bf $L_{\infty}$-algebra} is a graded vector space $\mathfrak{g}$ equipped with a collection of linear
maps $l_k : \wedge^k \mathfrak{g} \rightarrow \mathfrak{g}$ of degree $2-k$ for $k \geq 1$, such that for all $n \geq 1$
\begin{eqnarray*}
\sum_{i+j=n+1}(-1)^i\sum_{\sigma\in \mathbb{S}_{(i, n-i)}} \chi(\sigma)l_{j}(l_i(x_{\sigma(1)}, ... , x_{\sigma(i)}), x_{\sigma(i+1)}, ... , x_{\sigma(n)})=0.
\end{eqnarray*}
where $v_1, v_2, ..., v_n$ are homogeneous elements in $\mathfrak{g}$.
\end{defn}
\begin{defn}
Let $(\mathfrak{g}, \{l_k\}_{k=1}^{\infty})$ be an $L_{\infty}$-algebra.
An element $\alpha \in \mathfrak{g}^1$ is said to be a {\bf Maurer-Cartan element}  if it satisfies
\begin{eqnarray*}
l_1(\alpha)+\frac{1}{2!} l_2(\alpha, \alpha)-\frac{1}{3!} l_3(\alpha, \alpha, \alpha)-\cdots=0.
\end{eqnarray*}
\end{defn}


\subsection{The Cohomology of   RCW Reynolds operators  and  Maurer-Cartan characterizations}
In this subsection, we construct a representation of the pre-Lie algebra $(V, \cdot_{K})$ and give the cohomology of   \relwtrey operators on the pre-Lie algebra $(V, \cdot_K)$.

\begin{prop}\mlabel{prop:rep}
Let $K : V \rightarrow \mathfrak{g}$ be a \relwtrey operator. Define maps $\overline{\mathcal{L}}, \overline{\mathcal{R}} : V \rightarrow gl (\mathfrak{g})$ by
\begin{small}
\begin{align*}
\overline{\mathcal{L}}_ux = Ku \cdot x - K (\mathcal{R}_xu) - KH(Ku, x),~~~ \overline{\mathcal{R}}_ux = x \cdot Ku - K(\mathcal{L}_xu)-KH(x, Ku),
\end{align*}
\end{small}
for $ u\in V$ and $ x\in \mathfrak{g}$. Then $(\mathfrak{g};  \overline{\mathcal{L}}, \overline{\mathcal{R}})$ is a representation of the pre-Lie algebra $(V, \cdot_{K})$.
\end{prop}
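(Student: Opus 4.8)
The plan is to avoid a direct, term-by-term verification of the two representation axioms and instead deduce them conceptually from the pre-Lie structure already available on the twisted semidirect product $\mathfrak{g}\ltimes_H V$. The starting point is Proposition~\ref{graph-twisted}: the graph $Gr(K)=\{(Ku,u)\mid u\in V\}$ is a subalgebra of $\mathfrak{g}\ltimes_H V$, and under the identification $u\mapsto(Ku,u)$ it is isomorphic to $(V,\cdot_K)$, since the Reynolds identity gives
\begin{align*}
(Ku,u)\cdot_H(Kv,v)=(Ku\cdot Kv,\,u\cdot_K v)=(K(u\cdot_K v),\,u\cdot_K v).
\end{align*}
Moreover $W:=\mathfrak{g}\oplus 0$ is a vector-space complement of $Gr(K)$ in $\mathfrak{g}\ltimes_H V$, isomorphic to $\mathfrak{g}$ via $x\mapsto(x,0)$. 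I will write $p$ for the projection onto $W$ along $Gr(K)$ and $q=\id-p$ for the projection onto $Gr(K)$.

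First I would identify $\overline{\mathcal{L}}$ and $\overline{\mathcal{R}}$ as the projected actions of $Gr(K)$ on $W$. Using Eq.~(\ref{eq:semipro}), a direct computation gives
\begin{align*}
(Ku,u)\cdot_H(x,0)&=(Ku\cdot x,\,\mathcal{R}_x u+H(Ku,x)),\\
(x,0)\cdot_H(Ku,u)&=(x\cdot Ku,\,\mathcal{L}_x u+H(x,Ku)),
\end{align*}
and, rewriting each right-hand side in the form $(K(\,\cdot\,),\,\cdot\,)+(\,\cdot\,,0)$, the $W$-components are exactly $\overline{\mathcal{L}}_u x=Ku\cdot x-K(\mathcal{R}_x u)-KH(Ku,x)$ and $\overline{\mathcal{R}}_u x=x\cdot Ku-K(\mathcal{L}_x u)-KH(x,Ku)$. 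Thus, after the identifications $(V,\cdot_K)\cong Gr(K)$ and $\mathfrak{g}\cong W$, we have $\overline{\mathcal{L}}_u x=p\bigl((Ku,u)\cdot_H(x,0)\bigr)$ and $\overline{\mathcal{R}}_u x=p\bigl((x,0)\cdot_H(Ku,u)\bigr)$.

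The heart of the argument is the general principle that, for any pre-Lie algebra $P$ with a subalgebra $S$ and a vector-space complement $W$, the projected maps $\mathcal{L}^W_s w:=p(s\cdot w)$ and $\mathcal{R}^W_s w:=p(w\cdot s)$ make $W$ a representation of $S$. I would prove this by applying the regular-representation identities of $P$ (equivalently, the symmetry of the pre-Lie associator in its first two arguments) to elements $s,t\in S$, $w\in W$, and then applying $p$. The key point is that, because $S$ is a subalgebra, any product landing in $S$ is annihilated by $p$, so terms such as $p\bigl(s\cdot q(t\cdot w)\bigr)$ and $p\bigl(q(s\cdot w)\cdot t\bigr)$ vanish, while the surviving terms collapse precisely to the composites $\mathcal{L}^W_s\mathcal{L}^W_t w$, $\mathcal{L}^W_{s\cdot t}w$, $\mathcal{R}^W_t\mathcal{R}^W_s w$, and so on. Matching the two sides of each associator identity reproduces exactly the two defining axioms of a representation. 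Specializing $P=\mathfrak{g}\ltimes_H V$, $S=Gr(K)$, $W=\mathfrak{g}\oplus 0$, and invoking the identifications above, this yields that $(\mathfrak{g};\overline{\mathcal{L}},\overline{\mathcal{R}})$ is a representation of $(V,\cdot_K)$.

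I expect the main obstacle to be bookkeeping rather than any genuine difficulty: in the general projection lemma one must track carefully which intermediate products lie in $S$ (and are thus killed by $p$) and which lie in $W$, and check that no cross term is accidentally dropped. A purely computational alternative---substituting the explicit formulas for $\overline{\mathcal{L}},\overline{\mathcal{R}},\cdot_K$ and $H$ into the two axioms and simplifying with the Reynolds identity $Ku\cdot Kv=K(u\cdot_K v)$, the representation axioms for $(V;\mathcal{L},\mathcal{R})$, the pre-Lie identity in $\mathfrak{g}$, and the $2$-cocycle condition on $H$---is available but markedly longer, and there the chief risk is misapplying the $2$-cocycle identity among the many $H$-terms. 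For this reason the conceptual route through the twisted semidirect product is preferable.
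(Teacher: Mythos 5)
Your proof is correct, and it takes a genuinely different route from the paper's. The paper proves the proposition by direct computation: it expands $\overline{\mathcal{L}}_u\overline{\mathcal{L}}_v x-\overline{\mathcal{L}}_{u\cdot_K v}x$ and $\overline{\mathcal{L}}_u\overline{\mathcal{R}}_v x-\overline{\mathcal{R}}_v\overline{\mathcal{L}}_u x$ term by term and regroups them using the Reynolds identity and the $2$-cocycle condition on $H$. You instead localize all structure-specific input in facts the paper has already established before this point---that $\mathfrak{g}\ltimes_H V$ is a pre-Lie algebra (this is where the cocycle condition is consumed), that $Gr(K)$ is a subalgebra (Proposition \ref{graph-twisted}, where the Reynolds identity is consumed), and that $Gr(K)\cong (V,\cdot_K)$---and reduce everything to a soft general lemma: a subalgebra $S$ of a pre-Lie algebra $P$ acts on any vector-space complement $W$ (equivalently on $P/S$) by projected left and right multiplications. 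That lemma is sound: both representation axioms are instances of the pre-Lie identity of $P$ with one or two arguments taken from $S$, those identities hold exactly in $P$ (not merely modulo $S$), and composing projected actions agrees with projecting iterated products because the cross terms $s\cdot q(t\cdot w)$, $q(s\cdot w)\cdot t$, $s\cdot q(w\cdot t)$, $q(w\cdot s)\cdot t$ all lie in $S$ and are annihilated by $p$. Your identification of the operators also checks out: with $p(x,u)=(x-Ku,0)$ one gets $p\bigl((Ku,u)\cdot_H(x,0)\bigr)=(\overline{\mathcal{L}}_ux,0)$ and $p\bigl((x,0)\cdot_H(Ku,u)\bigr)=(\overline{\mathcal{R}}_ux,0)$. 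What your route buys: the Reynolds and cocycle identities each enter exactly once and invisibly, the formulas for $\overline{\mathcal{L}},\overline{\mathcal{R}}$ are explained as the natural $Gr(K)$-module structure on the quotient rather than posited, and the argument generalizes verbatim to any subalgebra-plus-complement situation; it also parallels the paper's own derivation of Proposition \ref{induced-leib} from Proposition \ref{graph-twisted}. What the paper's computation buys: it is self-contained at the level of formulas and does not rest on the assertion (stated but not proved in the paper) that $\cdot_H$ satisfies the pre-Lie identity, a dependence your argument inherits.
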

\begin{proof}  For $u, v \in V$ and $x \in \mathfrak{g}$ , we have

\begin{eqnarray*}
&&  \mathcal{\overline{L}}_u\mathcal{\overline{L}}_vx - \mathcal{\overline{L}}_{u\cdot_K v} x \\
&=& \mathcal{\overline{L}}_u(Kv \cdot x - K(\mathcal{R}_xv) - KH(Kv, x)) - (K(u\cdot_K v)\cdot x - K(\mathcal{R}_x(u\cdot_K v)) - KH(K(u\cdot_K v), x))\\
&=&Ku \cdot (Kv \cdot x) - Ku \cdot K(\mathcal{R}_xv) - Ku \cdot KH(Kv, x) - K (\mathcal{R}_{Kv \cdot x}u)
+K (\mathcal{R}_{K\mathcal{R}_xv}u) \\
&&+ K (\mathcal{R}_{KH( Kv, x)}u) - KH(Ku, Kv \cdot x)  + KH(Ku,  K(\mathcal{R}_xv)) + KH(Ku, KH( Kv, x))\\
&& - (K(u)\cdot K(v))\cdot x +K(\mathcal{R}_x(u\cdot_K v)) + KH(K(u)\cdot K(v), x))\\
&=&Kv \cdot (Ku \cdot x) - Kv \cdot K(\mathcal{R}_xu) - Kv \cdot KH(Ku, x) - K (\mathcal{R}_{Ku \cdot x}v)
+K (\mathcal{R}_{K\mathcal{R}_xu}v) \\
&&+ K (\mathcal{R}_{KH( Ku, x)}v) - KH(Kv, Ku \cdot x)  + KH(Kv,  K(\mathcal{R}_xu)) + KH(Kv, KH( Ku, x))\\
&& - (K(v)\cdot K(u))\cdot x +K(\mathcal{R}_x(v\cdot_K u)) + KH(K(v)\cdot K(u), x))\quad (\text{by $H$ being a $2$-cocycle.})\\
&=&\mathcal{\overline{L}}_v\mathcal{\overline{L}}_ux - \mathcal{\overline{L}}_{v\cdot_K u}x.
\end{eqnarray*}
Then
 \begin{eqnarray*}
 \mathcal{\overline{L}}_u\mathcal{\overline{L}}_vx - \mathcal{\overline{L}}_{u\cdot_K v} x = \mathcal{\overline{L}}_v\mathcal{\overline{L}}_ux - \mathcal{\overline{L}}_{v\cdot_K u}x.
 \end{eqnarray*}
We also have
\begin{eqnarray*}
&&  \mathcal{ \overline{L}}_u\mathcal{\overline{R}}_vx - \mathcal{\overline{R}}_v\mathcal{\overline{L}}_ux \\
&=& \mathcal{ \overline{L}}_u(x \cdot Kv - K(\mathcal{L}_xv)-KH(x, Kv))-\mathcal{\overline{R}}_v(Ku \cdot x - K(\mathcal{R}_xu) - KH( Ku, x))\\
&=&Ku \cdot(x \cdot Kv) - Ku \cdot K(\mathcal{L}_xv) - Ku \cdot KH(x, Kv) - K(\mathcal{R}_{x \cdot Kv}u) +K (\mathcal{R}_{K\mathcal{L}_xv}u) \\
&&+ K(\mathcal{R}_{KH(x, Kv)}u) - KH(Ku, x \cdot Kv) + KH(Ku,K(\mathcal{L}_xv)) + KH(Ku,KH(x, Kv))\\
&& - (Ku \cdot x)\cdot Kv +  K(\mathcal{R}_xu)\cdot Kv + KH( Ku, x)\cdot Kv + K (\mathcal{L}_{Ku\cdot x}v) - K(\mathcal{L}_{K\mathcal{R}_xu}v) \\
&&- K (\mathcal{L}_{KH( Ku, x)}v) + KH(Ku \cdot x, Kv) - KH(K(\mathcal{R}_xu), Kv) - KH( KH( Ku, x), Kv)\\
&=&x \cdot (Ku \cdot Kv) - K ( \mathcal{L}_x\mathcal{L}_{Ku}v) -  K (\mathcal{L}_x \mathcal{R}_{Kv} u) - K\mathcal{L}_x H(Ku, Kv))  - KH(x, K(u\cdot_{K}v))\\
&& - (x\cdot Ku)\cdot Kv +  K(\mathcal{L}_xu)\cdot Kv + KH(x,  Ku)\cdot Kv + K (\mathcal{L}_{x\cdot Ku}v) - K(\mathcal{L}_{K\mathcal{L}_xu}v) \\
&&- K (\mathcal{L}_{KH( x, Ku)}v) + KH(x\cdot Ku, Kv) - KH(K(\mathcal{L}_xu), Kv) - KH( KH(  x, Ku), Kv)\\
&=&x \cdot K(u \cdot_K v) - K(\mathcal{L}_x(u\cdot_Kv)) - KH(x, K(u\cdot_{K}v))\\
&& - (x\cdot Ku)\cdot Kv +  K(\mathcal{L}_xu)\cdot Kv + KH(x,  Ku)\cdot Kv + K (\mathcal{L}_{x\cdot Ku}v) - K(\mathcal{L}_{K\mathcal{L}_xu}v) \\
&&- K (\mathcal{L}_{KH( x, Ku)}v) + KH(x\cdot Ku, Kv) - KH(K(\mathcal{L}_xu), Kv) - KH( KH(  x, Ku), Kv)\\
&=& \mathcal{\overline{R}}_{u\cdot v}x - \mathcal{\overline{R}}_v\mathcal{\overline{R}}_ux,
\end{eqnarray*}
which implies that
\begin{eqnarray*}
\mathcal{ \overline{L}}_u\mathcal{\overline{R}}_vx - \mathcal{\overline{R}}_v\mathcal{\overline{L}}_ux = \mathcal{\overline{R}}_{u\cdot v}x - \mathcal{\overline{R}}_v\mathcal{\overline{R}}_ux.
 \end{eqnarray*}
 Thus $(\mathfrak{g};  \overline{\mathcal{L}}, \overline{\mathcal{R}})$ is a representation of the pre-Lie algebra $(V, \cdot_{K})$.
\end{proof}

 We will now consider the cohomology of the pre-Lie algebra $(V, \cdot_K)$ with coefficients in the representation $(\mathfrak{g};  \overline{\mathcal{L}}, \overline{\mathcal{R}})$. More precisely, we define
\begin{eqnarray*}
C^n(V, \mathfrak{g}):= \mbox{Hom}( \wedge^{n-1}V\otimes V, \mathfrak{g}),~ \text{ for } n\geq 0.
\end{eqnarray*}
and the differential $\partial_K: C^n(V, \mathfrak{g}) \rightarrow  C^{n+1}(V, \mathfrak{g})$ by
\begin{align*}
&(\partial_Kf)(u_1, ..., u_{n+1})\\
=& \sum_{i=1}^{n}(-1)^{i+1}Ku_i \cdot f(u_1, ..., \hat{u}_i, ..., u_{n+1}) - \sum_{i=1}^{n} (-1)^{i+1} K (\mathcal{R}_{f(u_1, ..., \hat{u}_i, ..., u_{n+1})} u_i)\\
&-\sum_{i=1}^{n}(-1)^{i+1}KH( Ku_i, f(u_1, ..., \hat{u}_i, ..., u_{n+1}))+(-1)^{n+1}f(u_1, ..., u_{n}) \cdot Ku_{n+1}\\
& +(-1)^{n} K (\mathcal{L}_{f(u_1, ..., u_{n})}u_{n+1}) +(-1)^{n}KH( f(u_1, ...,\hat{u}_i, ...,  u_{n}, u_i), Ku_{n+1})\\
&-\sum^{n}_{i=1}(-1)^{i+1}f(u_1,..., \hat{u}_i, ..., u_n,  \mathcal{L}_{Ku_i}u_{n+1} +\mathcal{L}_{Ku_{n+1}}u_i+H(Ku_i, Ku_{n+1}))\\
&+\sum_{1\leq i< j\leq n+1}(-1)^if(\mathcal{L}_{Ku_i}u_j +\mathcal{R}_{Ku_j}u_i+H(Ku_i, Ku_j),  u_1, ..., \hat{u}_i,..., \hat{u}_{j}, ..., u_{n+1})\\
&-\sum_{1\leq i< j\leq n+1}(-1)^if(\mathcal{L}_{Ku_j}u_i +\mathcal{R}_{Ku_i}u_j+H(Ku_j, Ku_i),  u_1, ..., \hat{u}_i,..., \hat{u}_{j}, ..., u_{n+1}),
\end{align*}
for $f\in C^n(V, \mathfrak{g})$ and $u_1, ..., u_{n+1} \in V$. By the definition of $\partial_K$ and Proposition~\ref{prop:rep},  $\partial_K$ is  a special case of $\partial$,  Thus  $\partial_K^2=0$.
Denote by
\begin{align*}
    Z^n (V, \mathfrak{g}) = \{ f \in C^n (V, \mathfrak{g}) ~|~ \partial_K f = 0 \}\, \text{ and }\,
    B^n (V, \mathfrak{g}) = \{ \partial_K g ~|~ g \in C^{n-1}(V, \mathfrak{g}) \}.
\end{align*}
The corresponding cohomology groups
\begin{align*}
    H^n (V, \mathfrak{g}) := \frac{  Z^n (V, \mathfrak{g})  }{B^n (V, \mathfrak{g})}, \text{ for } n \geq 0
\end{align*}
are called the cohomology of the \relwtrey operator  $K$.

Let $(\mathfrak{g}, \cdot)$ be a pre-Lie algebra  and $(V; \mathcal{L}, \mathcal{R})$ be a representation of it.
Denote the Lie bracket on $\mathfrak{g}$ by the map $\mu : \wedge^2 \mathfrak{g} \rightarrow \mathfrak{g}$.
Then the graded vector space $\bigoplus_{n\geq 0}$ Hom$(\wedge^{n-1}V\otimes V, \mathfrak{g})$ carries a graded Lie bracket given by
\begin{eqnarray*}
\llbracket P, Q\rrbracket:= (-1)^{p-1} [[\mu + \mathcal{L}+\mathcal{R}, P]^{MN},Q]^{MN},
\end{eqnarray*}
for $P \in$ Hom$(\wedge^{p-1} V\otimes V, \mathfrak{g}), Q \in$ Hom$(\wedge^{q-1} V\otimes V, \mathfrak{g})$. Explicitly,
\begin{eqnarray*}
&&\llbracket P, Q\rrbracket (u_1, . . . , u_{p+q})\\
 &=&-\sum_{\sigma\in \mathbb{S}_{(q, 1, p-2)}} (-1)^{\sigma} P(\mathcal{L}_{Q(u_{\sigma(1)}, . . . , u_{\sigma(q)}, u_{\sigma(q)})}u_{\sigma(q+1)}, u_{\sigma(q+2)}, . . . , u_{\sigma(p+q-1)}, u_{p+q})\\
 &&+(-1)^{pq}\sum_{\sigma\in \mathbb{S}_{(p-1,q-2, 1)}} (-1)^{\sigma} P(u_{\sigma(1)}, u_{\sigma(2)}, . . . , u_{\sigma(p-1)},\mathcal{ R}_{Q(u_{\sigma(p+1)}, u_{\sigma(p+2)}, . . . , u_{\sigma(p+q-1)}, u_{p+q})}u_{\sigma(p)} )\\
&& +(-1)^{pq}\Bigg(\sum_{\sigma\in \mathbb{S}_{(p, 1, q-2)}} (-1)^{\sigma} Q(\mathcal{L}_{P(u_{\sigma(1)}, . . . , u_{\sigma(p)})}u_{\sigma(p+1)}, u_{\sigma(p+2)}, . . . , u_{\sigma(p+q-1)}, u_{p+q})\\
&& -(-1)^{pq}\sum_{\sigma\in \mathbb{S}_{(q-1, q,  1)}} (-1)^{\sigma} Q(u_{\sigma(1)}, u_{\sigma(2)}, . . . , u_{\sigma(q-1)},\mathcal{ R}_{P(u_{\sigma(q+1)}, u_{\sigma(q+2)}, . . . , u_{\sigma(q+p-1)}, u_{p+q})}u_{\sigma(q)} ) \Bigg)\\
&& -(-1)^{pq} \Bigg(\sum_{\sigma\in \mathbb{S}_{(p, q-1)}} (-1)^{\sigma} P(u_{\sigma(1)}, . . . , u_{\sigma(p)})\cdot Q(u_{\sigma(p+1)}, . . . , u_{\sigma(p+q-1)}, u_{p+q})\\
&&-(-1)^{pq}\sum_{\sigma\in \mathbb{S}_{(q, p-1)}} (-1)^{\sigma} Q(u_{\sigma(1)}, . . . , u_{\sigma(q)})\cdot P(u_{\sigma(q+1)}, . . . , u_{\sigma(p+q-1)}, u_{p+q})\Bigg).
\end{eqnarray*}

Hence, for any $K \in $ Hom$(V, \mathfrak{g})$,
\begin{eqnarray}\label{Maurer-Cartan}
\llbracket K, K\rrbracket(u, v)=2 (K u \cdot K v-K(\mathcal{L}_{K(u)}v +\mathcal{R}_{T (v)} u)), \mbox{ for}~ u, v \in V.
\end{eqnarray}
This shows that Maurer-Cartan elements on this graded Lie algebra are Rota-Baxter operators.

Let $H$ be a 2-cocycle in the cohomology of $\mathfrak{g}$ with coefficients in $V$.  We introduce a ternary degree -1 bracket $\llbracket-,-,-\rrbracket$ on the graded space
$\bigoplus_{n \geq0}$ Hom$(\wedge^{n-1} V\otimes V, \mathfrak{g})$ as follows. For $P \in$ Hom$(\wedge^{p-1} V\otimes V, \mathfrak{g})$, $Q \in $ Hom$(\wedge^{q-1} V\otimes V, \mathfrak{g})$, and $R \in$ Hom$(\wedge^{r-1} V\otimes V, \mathfrak{g})$, we define $\llbracket P, Q, R\rrbracket\in $ Hom$(\wedge^{p+q+r-2} V\otimes V, \mathfrak{g})$ by
\begin{footnotesize}
\begin{align}\label{ternary-brkt}
&\ \llbracket P,Q,R\rrbracket(u_1, . . . , u_{p+q+r-2}\otimes u_{p+q+r-1})\nonumber\\
=&\ \sum_{\sigma\in \mathbb{S}_{(q-1,1, r-1,1, p-2)}}(-1)^{rq+p-1} (-1)^{\sigma} P\Big(H(Q(u_{\sigma(1)}, . . . u_{\sigma(q-1)}\otimes u_{\sigma(q)}), R(u_{\sigma(q+1)}, . . . , u_{\sigma(q+r-1)}\otimes u_{\sigma(q+r)})), \nonumber\\
&\ u_{\sigma(q+r+1)}, ...,u_{\sigma(p+q+r-2)}\otimes u_{p+q+r-1}\Big)\nonumber\\
&\ + \sum_{\sigma\in \mathbb{S}_{(r-1,1, q-1,1, p-2)}}(-1)^{p} (-1)^{\sigma}P\Big(H(R(u_{\sigma(1)}, . . . ,u_{\sigma(r-1)}\otimes  u_{\sigma(r)}),
 Q(u_{\sigma(r+1)}, . . .u_{\sigma(q+r-1)} \otimes u_{\sigma(q+r)})), \nonumber\\
&\ u_{\sigma(q+r+1)}, ...,u_{\sigma(p+q+r-2)}\otimes u_{p+q+r-1}\Big)\nonumber\\
&\ + \sum_{\sigma\in \mathbb{S}_{(p-1,q-1,1, r-1)}}(-1)^{(r+1)(p+q+1)+pq+p+1}
 (-1)^{\sigma} P\Big(u_{\sigma(1)}, ...,u_{\sigma(p-1)}\otimes H(Q(u_{\sigma(p)}, . . . , u_{\sigma(p+q-2)}\otimes u_{\sigma(p+q-1)}), \nonumber\\
&\ R(u_{\sigma(p+q)}, . . . , u_{\sigma(p+q+r-2)}\otimes u_{p+q+r-1}))\Big)\nonumber\\
&\ +\sum_{\sigma\in \mathbb{S}_{(r-1, 1, p-1,1,  q)}} (-1)^{pq+q+1} (-1)^{\sigma} Q\Big(H(R(u_{\sigma(1)}, . . . ,u_{\sigma(r-1)}\otimes u_{\sigma(r)}), P(u_{\sigma(r+1)}, . . . , u_{\sigma(p+r-1)}\otimes u_{\sigma(p+r)})), \nonumber\\
&\ u_{\sigma(p+r+1)}, ...,u_{\sigma(p+q+r-2)} \otimes u_{p+q+r-1}\Big)\nonumber\\
&\ + \sum_{\sigma\in \mathbb{S}_{(p-1,1,  r-1, 1, q-2)}}(-1)^{rp+pq+q}(-1)^{\sigma} Q\Big(H(P(u_{\sigma(1)}, . . . ,u_{\sigma(p-1)} \otimes u_{\sigma(p)}), \nonumber\\
&\ R(u_{\sigma(p+1)}, . . . ,u_{\sigma(p+r-1)} \otimes u_{\sigma(p+r)})),  u_{\sigma(p+r+1)}, ...,u_{\sigma(p+q+r-2)}\otimes u_{p+q+r-1}\Big)\nonumber\\
&\ +\sum_{\sigma\in \mathbb{S}_{(q-1, p, r-1)}}(-1)^{(r-1)(p+q-1)+q}
 (-1)^{\sigma} Q\Big(u_{\sigma(1)}, . . . ,u_{\sigma(q-1)}\otimes H(P(u_{\sigma(q)}, . . . , u_{\sigma(p+q-2)}\otimes u_{\sigma(p+q-1)}), \nonumber\\
&\ R(u_{\sigma(p+q)}, . . . , u_{\sigma(p+q+r-2)}\otimes u_{p+q+r-1}))\Big)\nonumber\\
&\ +\sum_{\sigma\in \mathbb{S}_{(q-1, 1,  p-1, 1, r-2)}}(-1)^{(r-1)(p+q-1)++p+q-1} (-1)^{\sigma}R\Big(H(Q(u_{\sigma(1)}, . . . , u_{\sigma(q-1)}\otimes u_{\sigma(q)}), P(u_{\sigma(q+1)}, . . . , u_{\sigma(p+q-1)}\otimes u_{\sigma(p+q)})), \nonumber\\
&\ u_{\sigma(p+q+1)}, ...,u_{\sigma(p+q+r-2)} \otimes u_{p+q+r-1}\Big)\nonumber\\
&\ +\sum_{\sigma\in \mathbb{S}_{(p-1, 1,  q-1, 1, r-2)}}(-1)^{p(q-1)+(r-1)(p+q-1)+q} (-1)^{\sigma}R\Big(H(P(u_{\sigma(1)}, . . . , u_{\sigma(p-1)}\otimes u_{\sigma(p)}),
Q(u_{\sigma(p+1)}, . . . , u_{\sigma(p+q-1)}\otimes u_{\sigma(p+q)})),\nonumber\\
&\ u_{\sigma(p+q+1)}, ...,u_{\sigma(p+q+r-2)} \otimes u_{p+q+r-1}\Big)\nonumber\\
&\ +\sum_{\sigma\in \mathbb{S}_{(r-1, q-1,1, p-1)}}(-1)^{p+q+1}(-1)^{\sigma}
 R\Big(u_{\sigma(1)}, . . . , u_{\sigma(r-1)}\otimes H(Q(u_{\sigma(r)}, . . . , u_{\sigma(r+q-2)}\otimes u_{\sigma(r+q-1)}),\nonumber\\
&\ P(u_{\sigma(r+q)}, . . . , u_{\sigma(p+q+r-2)}\otimes u_{p+q+r-1}))\Big)\nonumber\\
&\ + \sum_{\sigma\in \mathbb{S}_{(r-1, p-1,1, q-1)}}
 (-1)^{pq+p+q}(-1)^{\sigma}R\Big(u_{\sigma(1)}, . . . , u_{\sigma(r-1)}\otimes H(P(u_{\sigma(r)}, . . . , u_{\sigma(r+p-2)} \otimes u_{\sigma(r+p-1)}),\nonumber\\
&\ Q(u_{\sigma(r+p)}, . . . , u_{\sigma(p+q+r-2)}\otimes u_{p+q+r-1}))\Big).
\end{align}
\end{footnotesize}
The binary bracket $\llbracket -, -\rrbracket$ and the
ternary bracket $\llbracket -, -, -\rrbracket$ are compatible in the sense of $L_{\infty}$-algebra. This follows as $H$ is a 2-cocycle. In
summary, we obtain the following.

\begin{theorem}
Let $(\mathfrak{g}, \cdot)$ be a pre-Lie algebra  and $(V; \mathcal{L}, \mathcal{R})$ be a representation of it,  and $H$ be a 2-cocycle in the cohomology of $\mathfrak{g}$ with
coefficients in $V$. Then the graded vector space $\bigoplus _{n\geq 0}$ Hom$(\wedge^{n-1} V\otimes V, \mathfrak{g})$ is an $L_{\infty}$-algebra with
\begin{eqnarray*}
l_1=0,~~~ l_2=\llbracket -, -\rrbracket,~~~ l_3=\llbracket -,-,-\rrbracket
\end{eqnarray*}
and higher maps are trivial. A linear map $K : V\rightarrow \mathfrak{g}$ is a \relwtrey  operator if and only if
$K \in$ Hom $(V, \mathfrak{g})$ is a Maurer-Cartan element in the above $L_{\infty}$-algebra $(\bigoplus _{n\geq 0}$ Hom$(\wedge^{n-1} V\otimes V, \mathfrak{g}), \llbracket-, -\rrbracket,  \llbracket-,-,-\rrbracket)$.
\end{theorem}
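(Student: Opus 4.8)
The plan is to realize the stated $L_{\infty}$-algebra as an instance of the higher derived bracket construction (Voronov, in the form used by Liu~\cite{L20}), applied to the graded Lie algebra $\mathfrak{L} = (C^{\ast}(\mathfrak{g}\oplus V,\mathfrak{g}\oplus V), [-,-]^{MN})$ attached to the underlying vector space $\mathfrak{g}\oplus V$. The essential structural input is a decomposition of $\mathfrak{L}$: I would single out the graded subspace $\mathfrak{a} = \bigoplus_{n\geq 0}\Hom(\wedge^{n-1}V\otimes V,\mathfrak{g})$ of cochains whose inputs lie in $V$ and whose output lies in $\mathfrak{g}$, together with the projection $\mathrm{pr}:\mathfrak{L}\to\mathfrak{a}$. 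First I would check that $\mathfrak{a}$ is an \emph{abelian} subalgebra: for two elements of $\mathfrak{a}$ the Matsushima--Nijenhuis bracket requires feeding a $\mathfrak{g}$-valued output into a $V$-input (or conversely), which is impossible, so $[\mathfrak{a},\mathfrak{a}]^{MN}=0$. This type-matching device, bookkeeping which slots accept $\mathfrak{g}$-inputs versus $V$-inputs and which maps are $\mathfrak{g}$- versus $V$-valued, is the tool I would use throughout.

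Next I would assemble the Maurer--Cartan datum $\Theta := \Phi + H \in C^{2}(\mathfrak{g}\oplus V,\mathfrak{g}\oplus V)$, where $\Phi := \mu + \mathcal{L} + \mathcal{R}$ is the multiplication element of the semidirect product pre-Lie algebra $\mathfrak{g}\ltimes_0 V$ and $H$ is the weight cocycle. I would verify $[\Theta,\Theta]^{MN}=0$ by expanding it as $[\Phi,\Phi]^{MN} + 2[\Phi,H]^{MN} + [H,H]^{MN}$. The first term vanishes because $\mathfrak{g}\ltimes_0 V$ is a genuine pre-Lie algebra, whose structure element squares to zero under $[-,-]^{MN}$; the last term vanishes by the same type reason as above, since $H$ is $V$-valued and cannot be composed with itself; and the cross term $[\Phi,H]^{MN}$ is, up to sign, precisely the pre-Lie coboundary $\partial H$, so it vanishes exactly when $H$ is a $2$-cocycle. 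Identifying the cocycle condition with $[\Phi,H]^{MN}=0$ is the conceptual heart of the argument, and the sign reconciliation in matching $[\Phi,H]^{MN}$ to $\partial H$ is where I expect the main work to lie.

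With $\Theta$ in hand, the general construction endows $\mathfrak{a}$ with the $L_{\infty}$-structure $l_k(a_1,\dots,a_k) = \mathrm{pr}[\cdots[[\Theta,a_1]^{MN},a_2]^{MN},\dots,a_k]^{MN}$, so it remains to identify these brackets. Since $\Theta$ enters each $l_k$ exactly once, bilinearity splits $l_k$ into a term built from $\Phi$ and a term built from $H$, and I would discard all but two of them by slot-counting: the $H$-term in $l_1$ and in $l_2$ vanishes, because a single $H$ needs two $\mathfrak{g}$-inputs that cannot be supplied by fewer than two of the $\mathfrak{g}$-valued maps $a_i$; while the $\Phi$-term in $l_1$ and in $l_3$ vanishes, because there is no consistent placement of the leftover $\mathfrak{g}$-valued map (the root and $\Phi$'s slots saturate at most two of the $a_i$). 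This forces $l_1 = 0$, $l_2 = \mathrm{pr}[[\Phi,-]^{MN},-]^{MN} = \llbracket-,-\rrbracket$, $l_3 = \mathrm{pr}[[[H,-]^{MN},-]^{MN},-]^{MN} = \llbracket-,-,-\rrbracket$, and $l_{\geq 4}=0$ (for $k\geq 4$ both the $\Phi$- and $H$-terms leave at least one $\mathfrak{g}$-valued output unplaced). Matching the surviving derived brackets term-by-term with the explicit formula for $\llbracket-,-\rrbracket$ and with~\eqref{ternary-brkt} is routine but sign-heavy.

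Finally, for the Maurer--Cartan characterization I would specialize to $K\in\Hom(V,\mathfrak{g})=\mathfrak{a}^{1}$, where the Maurer--Cartan equation collapses to $\tfrac12 l_2(K,K) - \tfrac{1}{3!}l_3(K,K,K)=0$, as $l_1=0$ and $l_{\geq 4}=0$. By~\eqref{Maurer-Cartan}, $\tfrac12 l_2(K,K)(u,v) = Ku\cdot Kv - K(\mathcal{L}_{Ku}v + \mathcal{R}_{Kv}u)$, while on arity-one arguments all unshuffle sums in~\eqref{ternary-brkt} collapse and a direct evaluation yields $\tfrac{1}{3!}l_3(K,K,K)(u,v) = KH(Ku,Kv)$. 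Combining the two shows the Maurer--Cartan equation is exactly $Ku\cdot Kv = K(\mathcal{L}_{Ku}v + \mathcal{R}_{Kv}u + H(Ku,Kv))$, i.e.\ the condition of Definition~\ref{def:reypre}, which completes the equivalence. I expect the only genuine obstacles to be the two sign computations flagged above; the remainder of the proof is structural and driven entirely by the type-matching argument.
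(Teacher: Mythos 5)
Your proposal is correct and follows essentially the same route as the paper: the paper's binary bracket is by definition the derived bracket $(-1)^{p-1}[[\mu+\mathcal{L}+\mathcal{R},P]^{MN},Q]^{MN}$, its ternary bracket is the corresponding derived bracket built from the cocycle $H$, and its terse justification that the two brackets are ``compatible in the sense of $L_{\infty}$-algebra \dots\ as $H$ is a 2-cocycle'' is exactly your verification that $\Theta=\Phi+H$ squares to zero in the Matsushima--Nijenhuis graded Lie algebra of $\mathfrak{g}\oplus V$ (the paper's own remark confirms it is following Liu's derived-bracket construction). Your concluding Maurer--Cartan computation for $K\in\mathrm{Hom}(V,\mathfrak{g})$ coincides with the paper's proof of the second part, so your write-up simply supplies the type-matching and Voronov-data details that the paper leaves implicit.
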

\begin{proof}
 The first part follows from the previous discussions. For the second part, we observe that for any $K\in $ Hom$(V, \mathfrak{g})$,
\begin{eqnarray}\label{ternary-element}
\llbracket K, K, K \rrbracket (u, v) = 6 K (H(K u, K v)).
\end{eqnarray}
Hence from (\ref{Maurer-Cartan}) and (\ref{ternary-element}), we get
\begin{eqnarray*}
(\frac{1}{2}\llbracket K, K\rrbracket+\frac{1}{6}\llbracket K, K, K \rrbracket) (u, v)=(K u \cdot K v-K(\mathcal{L}_{K(u)}v +\mathcal{R}_{T (v)} u))+K (H(K u, K v)).
\end{eqnarray*}
This shows that $K$ is a \relwtrey operator if and only if $K$ is a Maurer-Cartan element in the
$L_{\infty}$-algebra.
\end{proof}
\begin{remark}
In \cite{L20},  Liu considered  a graded Lie algebra associated to a pre-Lie algebra and a bimodule over it. A \relwtrey operator can be characterized by certain solutions of the corresponding Maurer-Cartan equation.  Here we follow the result of Liu and the derived bracket construction of Chapoton and Livernet \cite{CL01} to construct an explicit graded Lie algebra whose Maurer-Cartan elements are precisely \relwtrey operators.
\end{remark}

Let $K$ be a \relwtrey operator. Since $K$ is a Maurer-Cartan element of
the graded Lie algebra $(C^{\ast}(V, g), \llbracket-, -\rrbracket, \llbracket-, -, -\rrbracket)$, it follows from the
graded Jacobi identity that the map:
\begin{eqnarray*}
d_K :C^n(V, g) \rightarrow C^{n+1}(V, g), d_K (f) = \llbracket K, f\rrbracket+\frac{1}{2}\llbracket K, K, f\rrbracket,~~~f\in C^{n}(V, g)
\end{eqnarray*}
is a graded derivation of the graded Lie algebra $(C^{\ast}(V, g), \llbracket-, -\rrbracket, \llbracket-, -, -\rrbracket)$ satisfying
$d_K \circ d_K = 0$.
\begin{prop}
Let $K$ be a \relwtrey operator. Then $\bigoplus_{n\geq 0}$ Hom$(\wedge^{n-1} V\otimes V, \mathfrak{g})$ carries an $L_{\infty}$-algebra with structure maps
\begin{eqnarray*}
l_1(P)=d_K(P),~~\llbracket P, Q\rrbracket_K= \llbracket P, Q\rrbracket+\llbracket K, P, Q\rrbracket,~~\llbracket P, Q,  R\rrbracket_K=\llbracket P, Q,  R\rrbracket
\end{eqnarray*}
and trivial higher brackets. We call this as the {\bf twisted $L_{\infty}$-algebra}  by $K$.
\end{prop}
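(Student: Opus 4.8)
The plan is to recognize the stated structure as the \emph{twist} of the $L_\infty$-algebra $\big(\bigoplus_{n\geq 0}\mathrm{Hom}(\wedge^{n-1}V\otimes V,\mathfrak{g}),\ \llbracket-,-\rrbracket,\ \llbracket-,-,-\rrbracket\big)$ from the preceding theorem by its Maurer--Cartan element $K$, and then to invoke the standard principle that the twist of an $L_\infty$-algebra by a Maurer--Cartan element is again an $L_\infty$-algebra. Recall that for an $L_\infty$-algebra $(\mathcal{L},\{l_k\}_{k\geq 1})$ with a degree-$1$ Maurer--Cartan element $\alpha$, the twisted maps are (with the sign conventions fixed above absorbed into the brackets)
\[
l_k^{\alpha}(x_1,\dots,x_k)=\sum_{n\geq 0}\frac{1}{n!}\,l_{k+n}(\underbrace{\alpha,\dots,\alpha}_{n},x_1,\dots,x_k),
\]
and these satisfy the $L_\infty$-relations precisely because $\alpha$ solves the Maurer--Cartan equation; this is the general fact I would cite.

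The second step is to substitute our concrete data $l_1=0$, $l_2=\llbracket-,-\rrbracket$, $l_3=\llbracket-,-,-\rrbracket$, and $l_k=0$ for $k\geq 4$, together with $\alpha=K$, into the displayed formula. Since only $l_2$ and $l_3$ are nonzero, each twisted map is a finite sum. For $k=1$ the $n=0$ term vanishes and only $n=1,2$ survive, giving $l_1^{K}(P)=\llbracket K,P\rrbracket+\tfrac12\llbracket K,K,P\rrbracket=d_K(P)$; for $k=2$ only $n=0,1$ survive, giving $\llbracket P,Q\rrbracket_K=\llbracket P,Q\rrbracket+\llbracket K,P,Q\rrbracket$; for $k=3$ only $n=0$ survives, giving $\llbracket P,Q,R\rrbracket_K=\llbracket P,Q,R\rrbracket$; and for $k\geq 4$ every term carries some $l_{k+n}$ with $k+n\geq 4$, so $l_k^{K}=0$. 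This reproduces the asserted structure maps exactly, so it remains only to confirm that the general twisting theorem applies.

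The essential input --- and the step I would treat most carefully --- is that $K$ is a genuine Maurer--Cartan element of the untwisted $L_\infty$-algebra, which is exactly the content of the preceding theorem via Eq.~(\ref{Maurer-Cartan}) and Eq.~(\ref{ternary-element}): it says $\tfrac12\llbracket K,K\rrbracket+\tfrac16\llbracket K,K,K\rrbracket=0$ is equivalent to $K$ being a \relwtrey operator. Granting this, the general theorem yields all $L_\infty$-relations for $\{l_k^{K}\}$. If instead one wants a self-contained argument, the main obstacle becomes the direct verification of these identities: one expands each relation, collects terms according to how many copies of $K$ occur, and checks that the outcome follows from the $L_\infty$-identities of the original brackets together with the Maurer--Cartan equation for $K$. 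The only genuinely delicate part is the bookkeeping of Koszul signs across the nested unshuffle sums appearing in the explicit formula for $\llbracket-,-,-\rrbracket$; everything else is a finite, mechanical expansion. A useful consistency check is that $d_K\circ d_K=0$ and the graded-derivation property of $d_K$, already recorded just before the statement, are exactly the low-arity instances of these relations.
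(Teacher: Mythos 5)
Your proposal is correct and coincides with the paper's approach: the paper states this proposition without a separate proof, relying precisely on the standard fact that twisting an $L_{\infty}$-algebra by a Maurer--Cartan element (here $K$, whose Maurer--Cartan property is the content of the preceding theorem) again yields an $L_{\infty}$-algebra, which is exactly the argument you spell out. Your explicit substitution of $l_1=0$, $l_2=\llbracket -,-\rrbracket$, $l_3=\llbracket -,-,-\rrbracket$ into the twisting formula --- recovering $d_K$, $\llbracket -,-\rrbracket_K$, $\llbracket -,-,-\rrbracket_K$, and the vanishing of all higher brackets --- is in fact more detailed than what the paper records.
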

\begin{theorem}
Let $K$ be a \relwtrey operator. For any linear map $K': V \rightarrow \mathfrak{g}$, the sum
$K + K'$ is a \relwtrey operator if and only if $K'$ is a Maurer-Cartan element in the above
twisted $L_{\infty}$-algebra.
\end{theorem}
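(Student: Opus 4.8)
The plan is to reduce the claim to the Maurer-Cartan correspondence already at hand. By the preceding theorem, a linear map is a \relwtrey operator precisely when it is a Maurer-Cartan element of the $L_\infty$-algebra $(\bigoplus_{n\geq 0}\Hom(\wedge^{n-1}V\otimes V,\mathfrak{g}),\, l_1=0,\, l_2=\llbracket-,-\rrbracket,\, l_3=\llbracket-,-,-\rrbracket)$. Hence $K+K'$ is a \relwtrey operator if and only if
\begin{eqnarray*}
\tfrac12\llbracket K+K',\, K+K'\rrbracket + \tfrac16\llbracket K+K',\, K+K',\, K+K'\rrbracket = 0,
\end{eqnarray*}
and it suffices to identify this equation with the Maurer-Cartan equation for $K'$ in the twisted $L_\infty$-algebra.

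First I would expand the left-hand side by multilinearity. As $K$ and $K'$ both lie in degree $1$, the graded symmetry of $l_2$ and $l_3$ makes both brackets symmetric in their $K,K'$ slots, so the expansion regroups with the usual multinomial coefficients. Collecting the terms according to the number of occurrences of $K'$, I would obtain
\begin{eqnarray*}
&&\Big(\tfrac12\llbracket K,K\rrbracket + \tfrac16\llbracket K,K,K\rrbracket\Big)
+ \Big(\llbracket K,K'\rrbracket + \tfrac12\llbracket K,K,K'\rrbracket\Big)\\
&&\quad + \Big(\tfrac12\llbracket K',K'\rrbracket + \tfrac12\llbracket K,K',K'\rrbracket\Big)
+ \tfrac16\llbracket K',K',K'\rrbracket.
\end{eqnarray*}

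Next I would invoke that $K$ is itself a \relwtrey operator, hence a Maurer-Cartan element of the untwisted algebra, so the first bracketed group vanishes. The remaining three groups match, term for term, the expression
\begin{eqnarray*}
d_K(K') + \tfrac12\llbracket K',K'\rrbracket_K + \tfrac16\llbracket K',K',K'\rrbracket_K,
\end{eqnarray*}
because $d_K(K')=\llbracket K,K'\rrbracket+\tfrac12\llbracket K,K,K'\rrbracket$, $\llbracket K',K'\rrbracket_K=\llbracket K',K'\rrbracket+\llbracket K,K',K'\rrbracket$, and $\llbracket K',K',K'\rrbracket_K=\llbracket K',K',K'\rrbracket$ by the definition of the twisted structure maps. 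This last expression is exactly the Maurer-Cartan equation of $K'$ in the twisted $L_\infty$-algebra, which settles both directions simultaneously.

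The argument is otherwise routine multilinear bookkeeping, and the only delicate point is tracking the graded-symmetry (Koszul) signs so that the multinomial coefficients $\tfrac12$ and $\tfrac16$ align coefficient-by-coefficient with the twisted brackets. Conceptually there is no real obstacle: the statement is an instance of the general fact that twisting an $L_\infty$-algebra by a Maurer-Cartan element $\alpha$ (here $\alpha=K$) yields an $L_\infty$-algebra whose Maurer-Cartan elements $\beta$ are exactly those with $\alpha+\beta$ Maurer-Cartan in the original algebra; indeed the maps $l_1=d_K$, $\llbracket-,-\rrbracket_K$, $\llbracket-,-,-\rrbracket_K$ are precisely the standard twists $l_k^{K}(-)=\sum_{j\geq0}\tfrac{1}{j!}\,l_{k+j}(K^{\otimes j},-)$ of the original brackets.
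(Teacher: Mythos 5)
Your proof is correct and follows essentially the same route as the paper's: expand $\tfrac12\llbracket K+K',K+K'\rrbracket+\tfrac16\llbracket K+K',K+K',K+K'\rrbracket$ by multilinearity and graded symmetry, kill the pure-$K$ group using that $K$ is itself a Maurer--Cartan element, and identify the remainder with $d_K(K')+\tfrac12\llbracket K',K'\rrbracket_K+\tfrac16\llbracket K',K',K'\rrbracket_K$, i.e.\ the Maurer--Cartan equation in the twisted structure. If anything your write-up is slightly cleaner, since the paper's displayed expansion silently omits the terms $\tfrac12\llbracket K,K\rrbracket+\tfrac16\llbracket K,K,K\rrbracket$ (and contains minor typos there), whereas you state explicitly that this group vanishes because $K$ satisfies the Maurer--Cartan equation.
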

\begin{proof}  We have
\begin{eqnarray*}
&& \frac{1}{2}\llbracket K+K', K+K'\rrbracket+\frac{1}{6}\llbracket K+K', K+K', K+K' \rrbracket\\
&=& \frac{1}{2}\llbracket K, K'\rrbracket+\frac{1}{2}\llbracket K', K\rrbracket+\frac{1}{2}\llbracket K', K'\rrbracket+\frac{1}{6}\llbracket K, K, K' \rrbracket\\
&& +\frac{1}{6}\llbracket K, K', K\rrbracket+\frac{1}{6}\llbracket K+K', K, K\rrbracket+\frac{1}{6}[[K, K', K' \rrbracket\\
&& +\frac{1}{6}\llbracket K', K, K' \rrbracket+\frac{1}{6}\llbracket K', K', K \rrbracket+\frac{1}{6}\llbracket K', K', K' \rrbracket\\
&=&\llbracket K, K'\rrbracket+\frac{1}{2}\llbracket K, K, K'\rrbracket+\frac{1}{2}(\llbracket K', K'\rrbracket+\llbracket K, K', K'\rrbracket)+\frac{1}{6}\llbracket K', K', K' \rrbracket\\
&=& d_K(K')+\frac{1}{2}\llbracket K', K'\rrbracket_K+\frac{1}{6}\llbracket K', K', K' \rrbracket_K.
\end{eqnarray*}
This completes the proof.
\end{proof}

\begin{prop}
Let $K$ be a \relwtrey operator. Then for any $f\in Hom(\wedge^{n-1}V\otimes V, \mathfrak{g})$, we have
\begin{eqnarray*}
d_K f=(-1)^{n-1}\partial_K f.
\end{eqnarray*}
\end{prop}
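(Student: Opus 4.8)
The statement asserts that the coboundary $d_K$ produced by the twisted $L_{\infty}$-structure agrees, up to the degree-dependent sign $(-1)^{n-1}$, with the explicit operator $\partial_K$, which was already identified as the pre-Lie coboundary $\partial$ of $(V,\cdot_K)$ with coefficients in the representation $(\mathfrak{g};\overline{\mathcal{L}},\overline{\mathcal{R}})$ of Proposition~\ref{prop:rep}. The plan is a direct computation: expand $d_Kf=\llbracket K,f\rrbracket+\tfrac12\llbracket K,K,f\rrbracket$ by substituting $P=K$ into the explicit formula for $\llbracket-,-\rrbracket$ and $P=Q=K$ into formula \eqref{ternary-brkt} for $\llbracket-,-,-\rrbracket$, and then match the resulting multilinear expression term-by-term with the definition of $\partial_K$. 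The organizing principle is that $\llbracket-,-\rrbracket$ is assembled only from $\mu,\mathcal{L},\mathcal{R}$, whereas every summand of $\llbracket-,-,-\rrbracket$ carries exactly one factor of $H$; consequently the two halves of $d_Kf$ reproduce the $H$-free and the $H$-linear parts of $\partial_Kf$ separately, and it suffices to verify these two matchings independently.

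First I would treat the binary term. Taking $P=K$ forces $p=1$ and $q=n$, so every unshuffle block of length $p-1=0$ (and the formally empty blocks of length $p-2$) degenerates and the sums reduce to ordinary $(n-1,1)$- and $(1,n-1)$-unshuffles of $u_1,\dots,u_n$ together with the distinguished last slot $u_{n+1}$. The surviving families then produce precisely $Ku_i\cdot f(u_1,\dots,\hat u_i,\dots,u_{n+1})$ and $f(u_1,\dots,u_n)\cdot Ku_{n+1}$ (from the $\mu$-part, i.e. the pre-Lie product on $\mathfrak{g}$), the terms $-K(\mathcal{R}_{f(\dots)}u_i)$ and $-K(\mathcal{L}_{f(\dots)}u_{n+1})$ (from $P=K$ absorbing an $\mathcal{L}$- or $\mathcal{R}$-value), and the insertions $f(\dots,\mathcal{L}_{Ku_i}u_{n+1}+\mathcal{L}_{Ku_{n+1}}u_i,\dots)$ and $f(\mathcal{L}_{Ku_i}u_j+\mathcal{R}_{Ku_j}u_i,\dots)$ with the opposite ordering subtracted (from $Q=f$ absorbing an endomorphism built from $P=K$). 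These are exactly the $H$-free parts of the terms $\overline{\mathcal{L}}_{u_i}f$, $\overline{\mathcal{R}}_{u_{n+1}}f$, $f(\dots,u_i\cdot_Ku_{n+1})$ and $f([u_i,u_j]_{\cdot_K},\dots)$ appearing in $\partial_Kf$, the $\sum_{i<j}$ antisymmetrization reflecting that the $\mu$-slot of the derived bracket feeds in the sub-adjacent Lie bracket. Matching the Koszul signs $(-1)^{\sigma}$ against the explicit $(-1)^{i+1},(-1)^{n},(-1)^{i+j}$ of $\partial_K$ yields the global factor $(-1)^{n-1}$.

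Next I would treat the ternary term. With $P=Q=K$ we have $p=q=1$ and $r=n$, so in \eqref{ternary-brkt} all blocks of length $q-1=p-1=0$ collapse and the ten families reduce to a few genuinely distinct ones; the prefactor $\tfrac12$ absorbs the transposition $P\leftrightarrow Q$, which is trivial here since both copies equal $K$. The remaining summands reproduce the three kinds of $H$-contributions of $\partial_Kf$: the terms $-KH(Ku_i,f(\dots,\hat u_i,\dots))$ and $KH(f(\dots),Ku_{n+1})$ arising from the $-KH(Ku_i,-)$ and $-KH(-,Ku_{n+1})$ pieces of $\overline{\mathcal{L}},\overline{\mathcal{R}}$, together with the insertions $f(\dots,H(Ku_i,Ku_{n+1}),\dots)$ and $f(H(Ku_i,Ku_j)-H(Ku_j,Ku_i),\dots)$ arising from the $H$-pieces of $\cdot_K$ and of $[-,-]_{\cdot_K}$. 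Under $p=q=1$ the various sign exponents of \eqref{ternary-brkt} collapse to the same $(-1)^{n-1}$ found in the binary case, so that $\tfrac12\llbracket K,K,f\rrbracket=(-1)^{n-1}\,(H\text{-part of }\partial_Kf)$. Adding the two contributions gives $d_Kf=(-1)^{n-1}\partial_Kf$.

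The only substantive difficulty is the sign bookkeeping: one must check that, once the unshuffle sets degenerate, the products of $(-1)^{\sigma}$ with the remaining convention factors in the binary bracket, and the longer sign exponents of \eqref{ternary-brkt} in the ternary bracket, all consolidate into the single degree factor $(-1)^{n-1}$ uniformly across every family of terms. This is routine but lengthy, and proceeds most cleanly by fixing the combinatorial type of each surviving summand (which of $u_i$, $u_j$, or $u_{n+1}$ occupies the distinguished slots) and verifying the sign in each type. The consistency of the whole matching is underwritten by $H$ being a $2$-cocycle, which is what makes $(\mathfrak{g};\overline{\mathcal{L}},\overline{\mathcal{R}})$ a representation and $(V,\cdot_K)$ a pre-Lie algebra, so that $\partial_K^2=0$ is compatible with $d_K\circ d_K=0$ coming from the $L_{\infty}$-structure.
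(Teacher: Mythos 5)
Your proposal is correct and follows essentially the same route as the paper: substitute $P=K$ into the binary bracket and $P=Q=K$ into the ternary bracket \eqref{ternary-brkt}, observe that $\llbracket K,f\rrbracket$ reproduces exactly the $H$-free terms of $\partial_K f$ and $\tfrac12\llbracket K,K,f\rrbracket$ reproduces exactly the $H$-dependent terms (the paper's computation of $\llbracket K,K,f\rrbracket$ carries the explicit factor $2$ that your $\tfrac12$ absorbs), each with the common sign $(-1)^{n-1}$, and add. The paper's proof is precisely this term-by-term matching, so there is nothing to bridge.
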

\begin{proof}
 For all $u_1, u_2, ..., u_{n+1} \in  V$, we have
\begin{eqnarray*}
&& \llbracket K,f \rrbracket (u_1, ..., u_{n+1})\\
&=& (-1)^{n-1}\bigg\{ \sum_{i=1}^{n}(-1)^{i+1}Ku_i \cdot f(u_1, ..., \hat{u}_i, ..., u_{n+1}) - \sum_{i=1}^{n} (-1)^{i+1} K (\mathcal{R}_{f(u_1, ..., \hat{u}_i, ..., u_{n+1})} u_i)\\
&&+(-1)^{n+1}f(u_1, ..., u_{n}) \cdot Ku_{n+1} +(-1)^{n} K (\mathcal{L}_{f(u_1, ..., u_{n})}u_{n+1})\\
&&-\sum^{n}_{i=1}(-1)^{i+1}f(u_1,..., \hat{u}_i, ..., u_n,  \mathcal{L}_{Ku_i}u_{n+1} +\mathcal{L}_{Ku_n+1}u_i)\\
&&+\sum_{1\leq i< j\leq n+1}(-1)^if(\mathcal{L}_{Ku_i}u_j,  u_1, ..., \hat{u}_i,..., \hat{u}_{j}, ..., u_{n+1})\\
&&-\sum_{1\leq i< j\leq n+1}(-1)^if(\mathcal{L}_{Ku_j}u_i,  u_1, ..., \hat{u}_i,..., \hat{u}_{j}, ..., u_{n+1}) \bigg\}.
\end{eqnarray*}
 Next from the expression (\ref{ternary-brkt}), we see that
\begin{eqnarray*}
&& \llbracket K, K, f \rrbracket (u_1, ..., u_{n+1})\\
&=&  (-1)^{n-1}~ 2\bigg\{-\sum_{i=1}^{n}(-1)^{i+1}KH( Ku_i, f(u_1, ..., \hat{u}_i, ..., u_{n+1}))\\
&&  +(-1)^{n}KH( f(u_1, ...,\hat{u}_i, ...,  u_{n}, u_i), Ku_{n+1})\\
&&-\sum^{n}_{i=1}(-1)^{i+1}f(u_1,..., \hat{u}_i, ..., u_n, H(Ku_i, Ku_{n+1}))\\
&&+\sum_{1\leq i< j\leq n+1}(-1)^if(H(Ku_i, Ku_j),  u_1, ..., \hat{u}_i,..., \hat{u}_{j}, ..., u_{n+1})\\
&&-\sum_{1\leq i< j\leq n+1}(-1)^if(H(Ku_j, Ku_i),  u_1, ..., \hat{u}_i,..., \hat{u}_{j}, ..., u_{n+1})\bigg\}.
\end{eqnarray*}
Hence, $d_K f=\llbracket K,f \rrbracket+\frac{1}{2}\llbracket K, K, f \rrbracket =(-1)^{n-1}\partial_K f$. This completes the proof.
\end{proof}

\section{ Deformations of \relwtrey operators}\label{sec:deform}

In this section, we first apply the  deformation theory of Gerstenhaber to \relwtrey operators. We then introduce the notation of Nijenhuis elements associated with a \relwtrey operator  arising from trivial linear deformations.
Finally, we give a sufficient condition for rigidity in terms of Nijenhuis elements.

\subsection{Linear deformations}
We first recall some basic concepts and facts about linear deformations that will be used in this paper.

Let $(\mathfrak{g}, \cdot)$ be a pre-Lie algebra, $(V;  \mathcal{L}, \mathcal{R})$ be its representation, and $H \in C^2 (\mathfrak{g}, V)$ be a $2$-cocycle in the  cochain complex. Let $K : V \rightarrow \mathfrak{g}$ be a \relwtrey operator.
A linear map $K_1 : V \rightarrow \mathfrak{g}$ is said to {\bf generate a linear deformation} of the \relwtrey operator $K$ if  the sum $K_t = K +t K_1, t \in \mathbb{C}$ is still a \relwtrey operator. Then we call $K_t = K +t K_1$ is  a {\bf linear deformation} of $K$.

Assome that $K_1$ generates a linear deformation of $K$. Then
\begin{align*}
    K_t u \cdot K_t v = K_t \big(   \mathcal{L}_{ K_t u} v + \mathcal{R}_{K_t v} u + H (K_t u, K_tv) \big), ~ \text{for } u, v \in V,
\end{align*}
which is equivalent to the following conditions
\begin{align}\label{1-cocycle}
Ku \cdot K_1v + K_1u \cdot K v =~& K_1(\mathcal{L}_{K u}v + \mathcal{R}_{K v}u+H(Ku, Kv))\nonumber\\
&+K (\mathcal{L}_{K_1u}v + \mathcal{R}_{K_1v}u+H(K_1u, Kv)+H(Ku, K_1v)),
\end{align}
\begin{align}
K_1u \cdot K_1v
 =K_1(L_{K_1u}v ~&+ R_{K_1v}u+H(Ku, K_1v)+H(K_1u, Kv))+KH(K_1u, K_1v),\\
K_1(H(K_1(u), K_1(v)))=~&0.
\end{align}
It follows from  (\ref{1-cocycle})  that $K_1$ is a $1$-cocycle in the cohomology of $K$. Then $K_1$ induces an element in $H^1_K (V, \mathfrak{g}).$

Two linear deformations $K_t = K +tK_1$  and $K'_t = K +tK'_1$ of $K$ are  {\bf equivalent} if there exists an element $x \in \mathfrak{g}$ such that, for each $t$,
\begin{align*}
\Big(\phi_t = \id_\mathfrak{g} +  t(L_x-R_x),~ \psi_t =\id_V + t(\mathcal{L}_{x}-\mathcal{R}_{x}+H(x, K-))\Big)
\end{align*}
is a morphism of \relwtrey operators from $K_t$ to $K'_t$.

Before proving the theorem, we first make some preliminary observations.
We observe  that $\phi_t = \id_\mathfrak{g} +  t (L_x-R_x)$ is a pre-Lie algebra morphism of $(\mathfrak{g}, \cdot)$ is equivalent to
\begin{eqnarray}\label{alg-map-imply}
(x \cdot y-y \cdot x)\cdot (x\cdot z-z \cdot x) =(y\cdot z)\cdot x=0 ~ \text{ for } y, z\in \mathfrak{g}.
\end{eqnarray}
Further, the conditions $\psi_t(\mathcal{L}_{y}u) = \mathcal{L}_{\phi_t(y)} \psi_t(u)$ and $\psi_t(\mathcal{R}_{y}u) = \mathcal{R}_{\phi_t(y)} \psi_t(u)$, for $y \in \mathfrak{g}, u \in V$ are  equivalent to
\begin{eqnarray}\label{left-action-imply}
\left\{
\begin{aligned}
H(x, K(\mathcal{L}_{y}u)) = \mathcal{L}_{y}H(x, Ku), \\
\mathcal{L}_{x\cdot y-y\cdot x} (\mathcal{L}_{x}u-\mathcal{R}_{x}u+H(x, Ku))= 0,
\end{aligned}
\right. \end{eqnarray}
\begin{eqnarray}\label{right-action-imply} \left\{
\begin{aligned}
H(x, K(\mathcal{R}_{y}u)) = \mathcal{R}_{y}H(x, Ku),\\
\mathcal{R}_{x\cdot y-y\cdot x} (\mathcal{L}_{x}u-\mathcal{R}_{x}u+H(x, Ku))= 0,
\end{aligned}
\right. \end{eqnarray}
respectively.
Similarly, the conditions $\psi_t \circ H = H \circ (\phi_t \otimes \phi_t)$ and $\phi_t \circ K_t = K'_t \circ \psi_t$ are  equivalent to
\begin{eqnarray}\label{h-comp-imply}
\left\{
\begin{aligned}
&\mathcal{L}_{x}H(y, z)-\mathcal{R}_{x}H(y, z)+H(x, KH(y, z)) = H(x\cdot y-y\cdot x, z)+H(y, x \cdot z-z\cdot x),\\
&H(x\cdot y-y\cdot x, x \cdot z-z\cdot x)= 0,
\end{aligned}
\right. \end{eqnarray}
\begin{eqnarray}\label{diffe}
 \left\{
\begin{aligned}
& K_1(u)+x \cdot Ku-Ku \cdot x=K(\mathcal{L}_{x}u-\mathcal{R}_{x}u+H(x, Ku)) + K'_1(u), \\
&x \cdot K_1u-K_1u \cdot x = K'_1 (\mathcal{L}_{x}u-\mathcal{R}_{x}u+H(x, Ku)),
\end{aligned}
\right.
\end{eqnarray}
respectively.

\begin{theorem}
Let $K$ be a \relwtrey operator.
If two linear deformations
$K_t = K +tK_1$  and $K'_t = K +tK'_1$ of $K$ are equivalent, then $K_1$ and $K'_1$ are in the same
cohomology class of $H^1_K (V, \mathfrak{g})$.
\end{theorem}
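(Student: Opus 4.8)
The plan is to deduce the statement directly from the definition of equivalence together with the degree-$0$ part of the differential $\partial_K$. By hypothesis there is an element $x \in \mathfrak{g}$ for which the pair $\big(\phi_t = \id_\mathfrak{g} + t(L_x - R_x),\ \psi_t = \id_V + t(\mathcal{L}_x - \mathcal{R}_x + H(x, K-))\big)$ is a morphism of \relwtrey operators from $K_t$ to $K'_t$ for every $t$. Expanding the four defining conditions of such a morphism and collecting the coefficient of $t$ produces the identities $(\ref{alg-map-imply})$--$(\ref{diffe})$; for the present statement only the first equation of $(\ref{diffe})$, which is the linear-in-$t$ part of $\phi_t \circ K_t = K'_t \circ \psi_t$, is needed. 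So first I would simply record that equation and rewrite it as
\begin{align*}
(K_1 - K'_1)(u) = K\big(\mathcal{L}_x u - \mathcal{R}_x u + H(x, Ku)\big) - x \cdot Ku + Ku \cdot x \tforall u \in V.
\end{align*}

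The core step is to recognize this right-hand side as a coboundary. I would compute the degree-$0$ differential $\partial_K : C^0(V, \mathfrak{g}) = \mathfrak{g} \to C^1(V, \mathfrak{g}) = \Hom(V, \mathfrak{g})$ on the element $x$, using the representation $(\mathfrak{g}; \overline{\mathcal{L}}, \overline{\mathcal{R}})$ of $(V, \cdot_K)$ from Proposition~$\ref{prop:rep}$ for the untwisted part and the $H$-dependent term coming from the ternary bracket, obtaining
\begin{align*}
(\partial_K x)(u) = \overline{\mathcal{L}}_u x - \overline{\mathcal{R}}_u x + KH(Ku, x).
\end{align*}
Substituting $\overline{\mathcal{L}}_u x = Ku \cdot x - K(\mathcal{R}_x u) - KH(Ku, x)$ and $\overline{\mathcal{R}}_u x = x \cdot Ku - K(\mathcal{L}_x u) - KH(x, Ku)$ and simplifying, this collapses to exactly the displayed expression for $(K_1 - K'_1)(u)$. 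Hence $K_1 - K'_1 = \partial_K x \in B^1(V, \mathfrak{g})$.

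Finally I would conclude: since $K_1$ and $K'_1$ are $1$-cocycles by $(\ref{1-cocycle})$ and their difference is the coboundary $\partial_K x$, they determine the same class in $H^1_K(V, \mathfrak{g})$, which is the assertion. I expect the only delicate point to be the second step, namely pinning down the degree-$0$ map $\partial_K$ precisely: because $C^0$ sits at the $\wedge^{-1}$ edge of the complex, its differential is cleanest to extract from the twisted $L_\infty$-structure via $d_K(-) = \llbracket K, -\rrbracket + \tfrac12 \llbracket K, K, -\rrbracket$, with the binary bracket $\llbracket K, -\rrbracket$ producing the $\overline{\mathcal{L}}_u x - \overline{\mathcal{R}}_u x$ piece and the ternary, $H$-dependent bracket $\llbracket K, K, -\rrbracket$ producing the remaining $KH(Ku, x)$ term. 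Keeping track of the signs there (and of the sign relating $d_K$ and $\partial_K$, which is irrelevant to the cohomology class) is the main bookkeeping obstacle; everything else is a direct substitution.
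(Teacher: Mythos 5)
Your proof is correct and takes essentially the same route as the paper: the paper's entire proof is the observation that the first identity in (\ref{diffe}) rearranges to $K_1(u)-K'_1(u)=\partial_K(x)(u)$, exactly your core step. Your explicit identification of the degree-zero differential, $(\partial_K x)(u)=\overline{\mathcal{L}}_u x-\overline{\mathcal{R}}_u x+KH(Ku,x)$, which indeed expands to $K(\mathcal{L}_x u-\mathcal{R}_x u+H(x,Ku))-x\cdot Ku+Ku\cdot x$, merely makes explicit a computation the paper leaves implicit.
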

\begin{proof}
By the first identity in (\ref{diffe}), we have  $K_1(u)-K'_1(u) = \partial_{K}(x)(u)$.
\end{proof}

\begin{remark}
If $K_t$ is further equivalent to the undeformed deformation $K'_t=K$, we call the linear deformation $K_t = K +tK_1$ of a \relwtrey operator $K$ is  {\bf trivial}.
\end{remark}

\subsection{Formal deformations }

Let $\mathbb{C}[[t]]$ be the ring of power series in one variable $t$. For any $\mathbb{C}$-linear space $V$,  let $V [[t]]$ denote the vector space of formal power series in $t$ with coefficients in $V$. If $(\mathfrak{g}, \cdot)$ is a pre-Lie algebra over $\mathbb{C}$, then by $\mathbb{C}[[t]]$-bilinearity, we can extend the pre-Lie bracket on $\mathfrak{g} [[t]]$ . Furthermore, if $(V; \mathcal{L}, \mathcal{R})$ is a representation of the pre-Lie algebra $(\mathfrak{g}, \cdot)$, then there is a representation $(V [[t]];  \mathcal{L}, \mathcal{R})$ of the pre-Lie algebra $\mathfrak{g}[[t]]$. Here, $\mathcal{L}$ and $\mathcal{R}$ are also extended by $\mathbb{C}[[t]]$-bilinearity.
By the same way, the $2$-cocycle $H$ can be extended to a $2$-cocycle on the pre-Lie algebra $\mathfrak{g}[[t]]$ with coefficients in $V[[t]]$. Without loss of confusion, we still denote this $2$-cocycle by $H$.

Let $K : V \rightarrow \mathfrak{g}$ be a  relative cocycle weighted Reynolds
operator and $H$ a $2$-cocycle.
Define the  power series
\begin{eqnarray*}
K_t:=\sum_{i=0}^{+\infty}K_it^{i},~ \text{ for } K_i\in \mbox{Hom} (V, \mathfrak{g}) ~ \text{ with } K_0 = K.
\end{eqnarray*}
Extending $K_t$ to a linear map from $V [[t]]$ to
$\mathfrak{g}[[t]]$ by $\mathbb{C}[[t]]$-linearity, which is still denoted by $K_t$.

\begin{defn}
A {\bf formal deformation} of $K$ is the formal sum $K_t=\sum_{i=0}^{+\infty}K_it^{i}$ with $K_0 = K$ that satisfying
\begin{align}\label{deform-eqn}
&&K_tu\cdot K_tv=K_t \big( \mathcal{L}_{K_tu}v+\mathcal{R}_{K_tv}u+H(K_t(u), K_t(v)) \big) \tforall u, v \in V.
\end{align}
\end{defn}
Note that $K_t$ is a \relwtrey operator on the pre-Lie algebra $\mathfrak{g}[[t]]$ with respect to the representation $V[[t]]$ and the $2$-cocycle $H$.
If $K_t=\sum_{i=0}^{+\infty}K_it^{i}$ is a formal deformation of a \relwtrey operator $K$ on a pre-Lie algebra $(\mathfrak{g}, \cdot)$  with respect to a representation $(V; \mathcal{L}, \mathcal{R})$ and the $2$-cocycle $H$, then $\cdot_{K_t}$ given by
\begin{eqnarray*}
u \cdot_{K_t} v:=\sum_{i=0}^{+\infty} \left( \mathcal{L}_{K_iu}v+\mathcal{R}_{K_iv}u+ \sum_{j+k = i} H(K_ju, K_k v) \right)t^{i} \tforall u, v\in V,
\end{eqnarray*}
is a formal deformation of the associated pre-Lie algebra $(V,   \cdot_{K})$.

By  (\ref{deform-eqn}) and comparing the coefficients of various powers of $t$, we get
\begin{align*}
 \sum_{i+j=n\geq 0}(K_i u \cdot K_j v )= \sum_{i+j=n\geq 0}K_i(\mathcal{L}_{K_ju}v+\mathcal{R}_{K_jv}u)+\sum_{i+j+k=n\geq 0}K_iH(K_j(u), K_k(v)) \tforall u, v\in V.
\end{align*}
When $n = 0$,  $K$ is a \relwtrey operator. When $n = 1$, we have
\begin{align*}
 Ku \cdot K_1v+K_1u\cdot  Kv =~& K_1(\mathcal{L}_{Ku}v+\mathcal{R}_{Kv}u+H(Ku, Kv))\\
 &+K(\mathcal{L}_{K_1u}v+\mathcal{R}_{K_1v}u+H(K_1(u), Kv)+H(K(u), K_1v)),
\end{align*}
which is equivalent to $(\partial_K (K_1))(u, v) = 0$.

\begin{prop}\label{linear-term-cocycle}
Let $K_t=\sum_{i=0}^{+\infty}K_it^{i}$ be a formal deformation of a \relwtrey operator $K$. Then $K_1$ is a $1$-cocycle in the cohomology of the \relwtrey operator
$K$, that is, $\partial_{K} (K_1)=0$.
\end{prop}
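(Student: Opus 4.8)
The plan is to read off the coefficient of $t^1$ in the formal deformation equation (\ref{deform-eqn}) and to identify the resulting identity with the statement $(\partial_K K_1)(u,v)=0$. First I would substitute $K_t=\sum_{i\ge 0}K_it^i$ into both sides of (\ref{deform-eqn}) and expand, using that the product $\cdot$, the maps $\mathcal{L},\mathcal{R}$, and the $2$-cocycle $H$ have all been extended $\mathbb{C}[[t]]$-bilinearly. Comparing the coefficients of $t^1$ — the computation already displayed just before the statement — yields, for all $u,v\in V$,
\begin{align*}
Ku \cdot K_1v+K_1u\cdot  Kv =~& K_1(\mathcal{L}_{Ku}v+\mathcal{R}_{Kv}u+H(Ku, Kv))\\
&+K(\mathcal{L}_{K_1u}v+\mathcal{R}_{K_1v}u+H(K_1u, Kv)+H(Ku, K_1v)).
\end{align*}

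The second step is to recognize the difference of the two sides of this identity as $(\partial_K K_1)(u,v)$. Since $\partial_K$ is the pre-Lie coboundary of the induced algebra $(V,\cdot_K)$ of Proposition \ref{induced-leib} with coefficients in the representation $(\mathfrak{g};\overline{\mathcal{L}},\overline{\mathcal{R}})$ of Proposition \ref{prop:rep}, its degree-one instance is
\begin{align*}
(\partial_K K_1)(u,v)=\overline{\mathcal{L}}_{u}K_1(v)+\overline{\mathcal{R}}_{v}K_1(u)-K_1(u\cdot_K v).
\end{align*}
Substituting the definitions $\overline{\mathcal{L}}_u x = Ku\cdot x - K(\mathcal{R}_x u)-KH(Ku,x)$ and $\overline{\mathcal{R}}_v x = x\cdot Kv - K(\mathcal{L}_x v)-KH(x,Kv)$ together with $u\cdot_K v=\mathcal{L}_{Ku}v+\mathcal{R}_{Kv}u+H(Ku,Kv)$, the three groups $\overline{\mathcal{L}}_{u}K_1(v)$, $\overline{\mathcal{R}}_{v}K_1(u)$, and $-K_1(u\cdot_K v)$ reassemble (up to reordering the summands inside the single $K(\cdots)$ term) into exactly the left-hand side minus the right-hand side of the displayed $t^1$-identity. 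Hence $(\partial_K K_1)(u,v)=0$ for all $u,v\in V$, that is $\partial_K K_1=0$, so $K_1$ is a $1$-cocycle in the cohomology of $K$.

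There is no genuine obstacle here: the entire content is the degree-one coefficient extraction of (\ref{deform-eqn}), which has already been carried out, combined with the identification of $\partial_K$ in degree one furnished by Propositions \ref{prop:rep} and \ref{induced-leib}. The only step requiring care is the bookkeeping when matching each term of $\overline{\mathcal{L}}_{u}K_1(v)+\overline{\mathcal{R}}_{v}K_1(u)-K_1(u\cdot_K v)$ against a term of the $t^1$-coefficient equation; this is purely mechanical and uses nothing beyond the definitions. In particular it does not invoke the full $2$-cocycle condition on $H$, which is needed only to guarantee $\partial_K^2=0$ and plays no role in the present degree-one argument.
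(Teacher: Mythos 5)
Your proposal is correct and is essentially the paper's own argument: the paper also extracts the coefficient of $t^1$ from the deformation equation and asserts that the resulting identity is equivalent to $(\partial_K K_1)(u,v)=0$, with your explicit matching of terms against $\overline{\mathcal{L}}_{u}K_1(v)+\overline{\mathcal{R}}_{v}K_1(u)-K_1(u\cdot_K v)$ simply spelling out the verification the paper leaves implicit. Your closing remark that the $2$-cocycle condition on $H$ is not used in this degree-one computation (only in making $\partial_K$ a differential) is also accurate.
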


\begin{defn}
The $1$-cocycle $K_1$ is called the {\bf infinitesimal} of the formal deformation $K_t = \sum_{i=0}^{+\infty}K_it^{i}$.
\end{defn}

We proceed to define an equivalence between two formal deformations of a \relwtrey operator.

\begin{defn}
Two formal deformations $K_t =\sum_{i=0}^{+\infty}K_it^{i}$ and $K'_t =\sum_{i=0}^{+\infty}K'_it^{i}$ of a \relwtrey operator $K$ are  {\bf equivalent} if there exist an element $x \in \mathfrak{g}$, linear maps
$\phi_i \in gl(\mathfrak{g})$ and $\psi_i \in gl(V)$, $i \geq 2$ such that the pair
\begin{align*}
\left(\phi_t =\id_\mathfrak{g} + t (L_x-R_x) + \sum_{i=2}^{+\infty}\phi_i t^{i},~ \psi_t=\id_V + t(\mathcal{L}_{x}-\mathcal{R}_{x}+H(x, K-)) + \sum_{i=2}^{+\infty}\psi_i t^{i} \right)
\end{align*}
is a morphism of \relwtrey operators from $K_t$ to $K'_t$.
\end{defn}
By comparing the coefficients of $t$ from
both sides of the identity $\phi_t \circ K_t = K'_t\circ \psi_t$, we obtain
\begin{eqnarray*}
&&K_1(u)-K'_1(u)=K(\mathcal{L}_{x}u-\mathcal{R}_{x}u+H(x, Ku))-x \cdot Ku+Ku \cdot x=\partial_{K}(x)(u) \tforall u \in V.
\end{eqnarray*}

As a consequence, we have

\begin{theorem}
The infinitesimal of a formal deformation of a \relwtrey operator $K$ is a $1$-cocycle in the cohomology of $K$.
The corresponding cohomology class depends only on the equivalence class of the deformation of $K$.
\end{theorem}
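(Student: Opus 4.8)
The plan is to prove the two assertions in turn; both reduce to extracting a single power of $t$ from an identity of formal power series, in the same spirit as the computations preceding the statement.

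For the first assertion I would simply invoke Proposition~\ref{linear-term-cocycle}: isolating the coefficient of $t^1$ in the deformation equation~(\ref{deform-eqn}) produces precisely the relation
\[
 (\partial_K K_1)(u,v)=0 \tforall u,v\in V,
\]
so that the infinitesimal $K_1$ lies in $Z^1(V,\mathfrak{g})$ and determines a class $[K_1]\in H^1(V,\mathfrak{g})$.

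For the second assertion, let $K_t$ and $K'_t$ be equivalent formal deformations of $K$, witnessed by an element $x\in\mathfrak{g}$ and a morphism $(\phi_t,\psi_t)$ of \relwtrey operators from $K_t$ to $K'_t$ whose leading terms are $\phi_t=\id_\mathfrak{g}+t(L_x-R_x)+O(t^2)$ and $\psi_t=\id_V+t(\mathcal{L}_x-\mathcal{R}_x+H(x,K-))+O(t^2)$. The key input is the intertwining relation $\phi_t\circ K_t=K'_t\circ\psi_t$. First I would expand both sides and match coefficients: the $t^0$ term merely restates $K=K$, while the $t^1$ term gives, for all $u\in V$,
\[
 x\cdot Ku-Ku\cdot x+K_1(u)=K(\mathcal{L}_x u-\mathcal{R}_x u+H(x,Ku))+K'_1(u).
\]
Rearranging this identity yields $K_1(u)-K'_1(u)=K(\mathcal{L}_x u-\mathcal{R}_x u+H(x,Ku))-x\cdot Ku+Ku\cdot x$, which is the expression appearing in the display immediately before the theorem.

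The only step that is not pure bookkeeping, and hence the point I would isolate as the heart of the argument, is the recognition of this right-hand side as the coboundary $\partial_K(x)(u)$ of the $0$-cochain $x\in C^0(V,\mathfrak{g})=\mathfrak{g}$. This is exactly the identification carried out in the linear-deformation analysis (the first line of~(\ref{diffe})), whose coboundary terms are governed by the representation $(\mathfrak{g};\overline{\mathcal{L}},\overline{\mathcal{R}})$ of Proposition~\ref{prop:rep}; granting it, one concludes $K_1-K'_1=\partial_K(x)\in B^1(V,\mathfrak{g})$. Hence $[K_1]=[K'_1]$ in $H^1(V,\mathfrak{g})$, which together with the first assertion shows that the class of the infinitesimal is an invariant of the equivalence class of the deformation, completing the proof.
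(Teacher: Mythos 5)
Your proposal is correct and follows essentially the same route as the paper: the first assertion is exactly Proposition~\ref{linear-term-cocycle} (the coefficient of $t$ in the deformation equation), and the second is obtained, as in the paper, by extracting the coefficient of $t$ from $\phi_t\circ K_t=K'_t\circ\psi_t$ to get $K_1(u)-K'_1(u)=K(\mathcal{L}_x u-\mathcal{R}_x u+H(x,Ku))-x\cdot Ku+Ku\cdot x=\partial_K(x)(u)$, so that $[K_1]=[K'_1]$ in $H^1_K(V,\mathfrak{g})$.
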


\begin{defn}
A \relwtrey operator $K$ is said to be {\bf rigid} if any formal deformation of $K$ is
equivalent to the undeformed deformation $K'_t = K$ .
\end{defn}

At the end of this section, we give a sufficient condition for the rigidity of a \relwtrey operator in terms of Nijenhuis elements.


\begin{defn}
 Let $K$ be a \relwtrey operator. An element $x\in \mathfrak{g}$ is called a {\bf Nijenhuis element} associated with $K$ if $x$ satisfies
\begin{eqnarray*}
x \cdot \overline{\mathcal{R}}_{u}(x)- \overline{\mathcal{R}}_{u}(x)\cdot x=0, ~\text{ for } u \in V
\end{eqnarray*}
and ~(\ref{alg-map-imply})--(\ref{h-comp-imply}) hold.
\end{defn}

We denote the set of Nijenhuis elements associated with $K$  by Nij$(K)$. Note  that a trivial deformation induces a Nijenhuis element.
\begin{theorem}
Let $K$ be a \relwtrey operator. If $Z^1_K (V, \mathfrak{g}) = \partial_K (\mathrm{Nij}(K ))$, then $K$ is rigid.
\end{theorem}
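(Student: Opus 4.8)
The plan is to show that an arbitrary formal deformation $K_t=\sum_{i\geq 0}K_it^{i}$ of $K$ (with $K_0=K$) can be gauged away order by order to the trivial deformation $K'_t=K$. First I would invoke Proposition~\ref{linear-term-cocycle} so that the infinitesimal $K_1$ lies in $Z^1_K(V,\mathfrak{g})$. By the standing hypothesis $Z^1_K(V,\mathfrak{g})=\partial_K(\mathrm{Nij}(K))$, there is a Nijenhuis element $x\in\mathrm{Nij}(K)$ with $K_1=\partial_K(x)$. The role of the Nijenhuis conditions (\ref{alg-map-imply})--(\ref{h-comp-imply}) together with $x\cdot\overline{\mathcal{R}}_u(x)-\overline{\mathcal{R}}_u(x)\cdot x=0$ is precisely that the pair $\phi_t=\id_\mathfrak{g}+t(L_x-R_x)$, $\psi_t=\id_V+t(\mathcal{L}_x-\mathcal{R}_x+H(x,K-))$ is a morphism of \relwtrey operators from $K_t$ to the deformation $K'_t:=\phi_t\circ K_t\circ\psi_t^{-1}$. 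Indeed, once $\phi_t$ is a pre-Lie morphism and $\psi_t$ intertwines $\mathcal{L},\mathcal{R}$ and $H$, a short computation writing $a=\psi_tu$, $b=\psi_tv$ shows that $K'_t$ automatically satisfies the deformation equation. Comparing linear terms in $\phi_t\circ K_t=K'_t\circ\psi_t$ as in (\ref{diffe}) gives $K'_1=K_1-\partial_K(x)=0$, so $K_t$ is equivalent to a deformation whose expansion starts at order $t^2$.

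Then I would iterate. Suppose, after finitely many such gauge transformations, $K_t$ has been replaced by an equivalent deformation $\widetilde K_t=K+\widetilde K_nt^{n}+O(t^{n+1})$ starting at some order $n\geq2$. Extracting the coefficient of $t^{n}$ from the deformation equation (\ref{deform-eqn}) and using $\widetilde K_1=\cdots=\widetilde K_{n-1}=0$, I would find that only the terms with a single nonzero index survive, yielding exactly the order-one identity (\ref{1-cocycle}) for $\widetilde K_n$ in place of $K_1$; hence $\widetilde K_n\in Z^1_K(V,\mathfrak{g})$. Applying the hypothesis again, $\widetilde K_n=\partial_K(x_n)$ for some $x_n\in\mathrm{Nij}(K)$, and the gauge $\phi_t=\id_\mathfrak{g}+t^{n}(L_{x_n}-R_{x_n})$, $\psi_t=\id_V+t^{n}(\mathcal{L}_{x_n}-\mathcal{R}_{x_n}+H(x_n,K-))$ is again a morphism (the Nijenhuis identities are homogeneous, so replacing $t$ by $t^{n}$ merely shifts their order of appearance, the cross terms living in degree $t^{2n}$ and vanishing as before). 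This gauge fixes all coefficients below degree $n$ and sends $\widetilde K_n\mapsto\widetilde K_n-\partial_K(x_n)=0$, producing an equivalent deformation starting at order $n+1$.

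Finally I would assemble these gauges. Since the $n$-th gauge equals $\id+O(t^{n})$, for each fixed power $t^{m}$ only the gauges with $n\leq m$ affect the corresponding coefficient; hence the infinite composition converges in the $t$-adic topology to a genuine formal morphism $(\phi_t,\psi_t)$ with $\phi_t=\id_\mathfrak{g}+O(t)$ and $\psi_t=\id_V+O(t)$, realizing an equivalence between the original $K_t$ and the undeformed deformation $K'_t=K$. This is exactly the rigidity of $K$.

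I expect the main obstacle to be the bookkeeping in the inductive step: verifying that the coefficient of $t^{n}$ in (\ref{deform-eqn}) collapses to the cocycle identity for $\widetilde K_n$, and checking that the rescaled gauge $(\id_\mathfrak{g}+t^{n}(L_{x_n}-R_{x_n}),\,\id_V+t^{n}(\mathcal{L}_{x_n}-\mathcal{R}_{x_n}+H(x_n,K-)))$ still satisfies all of (\ref{alg-map-imply})--(\ref{h-comp-imply}) and the extra Nijenhuis relation, in particular that the order-$t^{2n}$ contributions (the analogues of $(x\cdot y-y\cdot x)\cdot(x\cdot z-z\cdot x)$ and of $\mathrm{ad}_{x_n}$ applied to the $\overline{\mathcal{R}}$-terms) vanish. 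Everything else, including the $t$-adic convergence of the composition, is routine.
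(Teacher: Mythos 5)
Your proposal is correct and takes essentially the same route as the paper: write the infinitesimal as $\partial_K$ of a Nijenhuis element (the paper's sign convention $K_1=-\partial_K(x)$ versus your $K_1=\partial_K(x)$ is immaterial since $Z^1_K(V,\mathfrak{g})$ is a linear subspace), gauge it away with $\phi_t=\id_\mathfrak{g}+t(L_x-R_x)$, $\psi_t=\id_V+t(\mathcal{L}_x-\mathcal{R}_x+H(x,K-))$, and iterate. The only difference is that the paper compresses your entire inductive bookkeeping --- the cocycle identity for the leading coefficient of a deformation starting at order $n$, the rescaled $t^{n}$-gauges, and the $t$-adic convergence of the composed equivalences --- into the phrase ``applying the same process repeatedly,'' so your version simply supplies the details the paper leaves implicit.
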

\begin{proof}
Suppose that $K_t=\sum_{i=0}^{+\infty}K_it^{i}$ is a formal deformation of $K$.
By Proposition \ref{linear-term-cocycle},  $K_1$ is a $1$-cocycle in the
cohomology of $K$, that is, $K_1 \in Z^1_K (V, \mathfrak{g})$. Then by the hypothesis, there is a Nijenhuis element $x \in \mathrm{Nij}(K)$
such that $- \partial_K(x)=K_1$. Set
\begin{eqnarray*}
&& \phi_t =\id_\mathfrak{g} + t(L_x-R_x) ~~ \text{ and } ~~\psi_t = \id_V + t(\mathcal{L}_x-\mathcal{R}_x+H(x, K-)),
\end{eqnarray*}
and define $K'_t= \phi_t \circ K_t \circ \psi_t^{-1}$. Then $K'_t$ is a formal deformation equivalent to $K_t$. For $u \in V$, we have
\begin{align*}
 K'_t(u)=~&(\id_\mathfrak{g} + t(L_x-R_x))(K_t(u - t \mathcal{L}_{x}u+\mathcal{R}_{x}u- tH(x,Ku) + \text{ power of } t^{ \geq 2}))\\
=~& K(u) + t (K_1 u - K \mathcal{L}_{x} u+K\mathcal{R}_{x}u - KH (x, Ku) + x\cdot Ku-Ku\cdot x)+ \text{ power of } t^{\geq 2}. \\
=~& K(u) + t^2 K'_2 (u) + \cdots \qquad (\text{as } K_1 = - \partial_K (x)).
\end{align*}
Then the coefficient of $t$ in the expression of $K_t'$ is trivial. Applying the same process repeatedly, we get that $K_t$ is equivalent to $K$ and so
$K$ is rigid.
\end{proof}
\section{ NS-pre-Lie algebras}\label{sec:NS-prelie}

In this section, we introduce the concept of NS-pre-Lie algebras as the underlying structure of \relwtrey operators. We then study some properties of NS-pre-Lie algebras and give some examples.
We show  NS-pre-Lie algebras naturally induce pre-Lie algebras.


\begin{defn}\label{defn-ns}
An {\bf NS-pre-Lie algebra} is a quadruple $(A,  \triangleright, \triangleleft,  \circ )$ consisting of a  vector space $A$ together with three bilinear operations $\triangleright, \triangleleft, \circ: A \otimes A \rightarrow  A$   satisfying, for all $x, y,z \in A$,
\begin{eqnarray*}
&&(A1)\quad  (x \ast y)\triangleright z - x \triangleright (y\triangleright z)= (y \ast x)\triangleright z - y \triangleright (x\triangleright z), \\
&&(A2)\quad x\triangleright (y \triangleleft z ) - (x\triangleright y)\triangleleft z=y\triangleleft (x\ast z) -(y\triangleleft x)\triangleleft z,\\
&&(A3)\quad   (x \ast  y) \circ z - x \circ  (y\ast z)+(x\circ y)\triangleleft z-x\triangleright(y\circ z) \\
&&\quad\quad =  (y \ast  x) \circ z - y \circ  (x\ast z)+(y\circ x)\triangleleft z-y\triangleright(x\circ z),
\end{eqnarray*}
where $x \ast y := x \triangleright y + x\triangleleft y + x \circ y$.
\end{defn}

\begin{remark}
\begin{enumerate}
\item Any $L$-dendriform algebras introduced in \cite{BLN10} is an NS-pre-Lie algebra by taking $\circ$ to be trivial.
\item Any NS-algebra $(A, \succ, \prec, \curlyvee)$ is an NS-pre-Lie algebra by taking
$x\triangleright  y = x  \succ y, x \triangleleft y = x \prec y,  x\circ y=x \curlyvee y$.
\end{enumerate}
\end{remark}

The following result shows that NS-pre-Lie algebras can split pre-Lie  algebras.

\begin{prop}
Let $(A, \triangleright, \triangleleft, \circ )$ be an NS-pre-Lie algebra. Then the vector space $A$ with the bilinear operation
\begin{align*}
\ast : A \otimes A \rightarrow A, ~x\otimes y\mapsto x \ast y:= x \triangleright y + x\triangleleft y + x \circ y
\end{align*}
is a pre-Lie algebra.
\end{prop}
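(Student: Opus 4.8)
The plan is to verify the pre-Lie (left-symmetry) identity for $\ast$ directly, namely that the associator $(x\ast y)\ast z - x\ast(y\ast z)$ is symmetric under $x\leftrightarrow y$. Write $\mathrm{As}(x,y,z) := (x\ast y)\ast z - x\ast(y\ast z)$ and set $D := \mathrm{As}(x,y,z) - \mathrm{As}(y,x,z)$; the goal is to show $D=0$. Since $\ast = \triangleright + \triangleleft + \circ$ is linear in each slot, I would expand $D$ by replacing each occurrence of $\ast$ by the sum of the three operations, while keeping the subexpressions $(x\ast y)\triangleright z$, $(x\ast y)\circ z$ and $x\circ(y\ast z)$ partly unexpanded, so that the resulting elementary terms already match the shapes occurring in the axioms $(A1)$, $(A2)$, $(A3)$.

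The key observation is that this expanded difference $D$ splits into three groups of terms, each governed by exactly one axiom. First, $(x\ast y)\triangleright z - x\triangleright(y\triangleright z)$ together with the negative of its $x\leftrightarrow y$ swap is precisely the two sides of $(A1)$, hence vanishes. Second, the collection $(x\ast y)\circ z - x\circ(y\ast z) + (x\circ y)\triangleleft z - x\triangleright(y\circ z)$ — where the piece $(x\circ y)\triangleleft z$ is harvested from the expansion of $(x\ast y)\triangleleft z$ — together with the negative of its swap is exactly the two sides of $(A3)$, hence vanishes. The remaining terms, all carrying an outermost or inner $\triangleleft$, namely $(x\triangleright y)\triangleleft z$, $(x\triangleleft y)\triangleleft z$, $x\triangleright(y\triangleleft z)$, $x\triangleleft(y\ast z)$ and their $x\leftrightarrow y$ swaps, form a residual $R$ which I would show equals $(A2)^{\mathrm{sw}} - (A2)$, where $(A2)^{\mathrm{sw}}$ denotes $(A2)$ with $x$ and $y$ interchanged, after expanding $y\triangleleft(x\ast z)$ via $\ast = \triangleright + \triangleleft + \circ$. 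As both $(A2)$ and its swap hold, $R=0$, and therefore $D=0$, which is the desired left-symmetry of the associator of $\ast$.

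The main obstacle is purely the bookkeeping: the operation $\triangleleft$ does not organize the associator into a self-contained group by outermost operation, so one must (i) split $(x\ast y)\triangleleft z$ into its three summands and route the $(x\circ y)\triangleleft z$ summand to the $(A3)$-group while keeping $(x\triangleright y)\triangleleft z$ and $(x\triangleleft y)\triangleleft z$ in the $(A2)$-group, and (ii) invoke $(A2)$ in both its original and $x\leftrightarrow y$-swapped forms, since $(A2)$ — unlike $(A1)$ and $(A3)$ — is not itself an $x\leftrightarrow y$ symmetric statement but an identity pairing an $x$-headed term with a $y$-headed one. Once the residual terms are matched one-for-one against $(A2)^{\mathrm{sw}} - (A2)$, the proof is complete; no structure of the NS-pre-Lie algebra beyond the three defining axioms is needed.
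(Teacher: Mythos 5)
Your proof is correct, but it does not coincide with the paper's own argument --- in fact it is more careful than the paper's. The paper's proof is a single line: it asserts that summing the left-hand sides of (A1)--(A3) yields the associator $(x\ast y)\ast z - x\ast(y\ast z)$, and summing the right-hand sides yields $(y\ast x)\ast z - y\ast(x\ast z)$. Taken literally, that assertion is false: the sum of the left-hand sides contains $x\triangleright(y\triangleleft z)$ and $(x\triangleright y)\triangleleft z$ with signs opposite to those required in the associator, and it contains no trace of $(x\triangleleft y)\triangleleft z$ or of $x\triangleleft(y\ast z)$ at all. The root cause is exactly the asymmetry you isolate: (A1) and (A3) are statements of invariance under $x\leftrightarrow y$, whereas (A2) pairs an $x$-headed expression with a $y$-headed one, so a naive summation of the axioms cannot reproduce the associator. (The one-line summation argument does work verbatim for associative NS-algebras, where the axioms' left-hand terms genuinely add up to $(x\ast y)\ast z$; the paper appears to have transplanted that argument into the pre-Lie setting without adjustment.) Your bookkeeping --- splitting $(x\ast y)\triangleleft z$ into its three summands, routing $(x\circ y)\triangleleft z$ to the (A3)-group, and cancelling the residual $\triangleleft$-terms against (A2) together with its $x\leftrightarrow y$ swap --- is precisely what is needed to close this gap, and your observation that (A2) must be invoked twice (in original and swapped form) is the essential point the paper's proof glosses over. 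So: the proposition is true, your proof of it is valid, and it repairs rather than reproduces the argument given in the paper.
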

\begin{proof} On one hand, by summing the left hand sides of the identities (A1)-(A3), we have $(x \ast y) \ast z-x \ast (y \ast z)$. On the other hand, by summing the right hand sides of the identities (A1)-(A3), we get $(y \ast x) \ast z- y \ast (x \ast z)$. This completes the proof.
\end{proof}

We call  $(A, \ast)$  the subadjacent pre-Lie algebra of $(A,  \triangleright, \triangleleft, \circ)$ and $(A, \triangleright, \triangleleft, \circ)$ is called a compatible NS-pre-Lie algebra structure on $(A, \ast).$

\begin{prop}\label{prop-ns}
Let $(\mathfrak{g}, \cdot)$ be a pre-Lie algebra and $N: \mathfrak{g} \rightarrow \mathfrak{g}$ be a Nijenhuis operator on it. Define the bilinear operations
\begin{eqnarray*}
x \triangleright y = N(x)\cdot y,~~ x \triangleleft y = x\cdot N(y) ~~ \text{ and } ~~ x \circ y = -N(x\cdot y)~\text{ for }~ x, y\in \mathfrak{g}.
\end{eqnarray*}
Then $(\mathfrak{g}, \triangleright, \triangleleft, \circ)$   is an NS-pre-Lie algebra.
\end{prop}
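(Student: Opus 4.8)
The plan is to verify the three defining identities (A1)--(A3) directly. The two tools are the Nijenhuis identity
$$N(x)\cdot N(y)=N\big(N(x)\cdot y+x\cdot N(y)-N(x\cdot y)\big)\tforall x,y\in\mathfrak{g},$$
and the pre-Lie symmetry of the associator $(a,b,c):=(a\cdot b)\cdot c-a\cdot(b\cdot c)$, namely $(a,b,c)=(b,a,c)$. First I would record that the total operation is
$$x\ast y=x\triangleright y+x\triangleleft y+x\circ y=N(x)\cdot y+x\cdot N(y)-N(x\cdot y),$$
which is exactly the deformed product $x\cdot_N y$ of the earlier Nijenhuis Example, so the subadjacent pre-Lie algebra is $\mathfrak{g}_N$ and, crucially, the Nijenhuis identity can be read as $N(x\ast y)=N(x)\cdot N(y)$. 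This reformulation is the simplification used repeatedly below.

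For (A1), substituting the definitions gives $(x\ast y)\triangleright z=N(x\ast y)\cdot z=(N(x)\cdot N(y))\cdot z$ and $x\triangleright(y\triangleright z)=N(x)\cdot(N(y)\cdot z)$, so the left-hand side is the associator $(N(x),N(y),z)$ while the right-hand side is $(N(y),N(x),z)$; these agree by pre-Lie symmetry. For (A2), the left-hand side collapses to $-(N(x),y,N(z))$ and the right-hand side, after using $N(x\ast z)=N(x)\cdot N(z)$ on the term $y\triangleleft(x\ast z)$, collapses to $-(y,N(x),N(z))$, and again equality is the symmetry of the associator in its first two slots. Both of these are short and mechanical.

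The real work is (A3). Writing $E(x,y,z)$ for its left-hand side and inserting the definitions, one obtains
$$E(x,y,z)=-N\big((x\ast y)\cdot z\big)+N\big(x\cdot(y\ast z)\big)-N(x\cdot y)\cdot N(z)+N(x)\cdot N(y\cdot z),$$
and the claim is $E(x,y,z)=E(y,x,z)$. Expanding $x\ast y$ and $y\ast z$ puts the first two summands under a single outer $N$, but the last two summands are honest products in $\mathfrak{g}$. The decisive step is to apply the Nijenhuis identity once more to each of these two products, writing $N(x)\cdot N(y\cdot z)=N\big(N(x)\cdot(y\cdot z)+x\cdot N(y\cdot z)-N(x\cdot(y\cdot z))\big)$ and likewise for $N(x\cdot y)\cdot N(z)$, so that the whole difference becomes $E(x,y,z)-E(y,x,z)=N(\Gamma)$ for one explicit element $\Gamma\in\mathfrak{g}$. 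I would then show $\Gamma=0$: its summands carrying an inner $N$ either cancel in pairs or assemble into $N\big((x,y,z)-(y,x,z)\big)=0$, while the remaining summands regroup into the three associator differences $-(N(x),y,z)+(y,N(x),z)$, $-(x,N(y),z)+(N(y),x,z)$ and $-(x,y,N(z))+(y,x,N(z))$, each vanishing by pre-Lie symmetry.

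The main obstacle is entirely the bookkeeping in (A3): one must apply the Nijenhuis identity to the two stray product terms in just the right orientation so that everything lands inside a single $N$, and then track signs carefully while reassembling the roughly twenty leftover terms into associators. I do not expect a genuine shortcut via the preceding Proposition, since the derivation that the subadjacent operation of an NS-pre-Lie algebra is pre-Lie uses only (A1) and (A2) (the $\circ$-carrying remainder being symmetric for formal, relabeling reasons). Thus (A3) is logically independent of that Proposition together with (A1)--(A2), and the direct verification above cannot be avoided.
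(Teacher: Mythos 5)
Your verification is correct, and for (A1) and (A2) it coincides with the paper's own proof: both arguments read the Nijenhuis identity as $N(x\ast y)=N(x)\cdot N(y)$ and collapse the two sides of each axiom to pre-Lie associators, $(N(x),N(y),z)$ versus $(N(y),N(x),z)$ for (A1), and $(N(x),y,N(z))$ versus $(y,N(x),N(z))$ for (A2). The divergence is in (A3). The paper makes the same opening move you do, folding the stray products $N(x\cdot y)\cdot N(z)$ and $N(x)\cdot N(y\cdot z)$ under a single $N$, so that (A3) reduces to the $x\leftrightarrow y$ symmetry of the mixed expression $(x\cdot_N y)\cdot z+(x\cdot y)\cdot_N z-x\cdot(y\cdot_N z)-x\cdot_N(y\cdot z)$; but it then \emph{cites} \cite{WS19} for this symmetry, phrased as the statement that $\cdot$ and $\cdot_N$ are compatible pre-Lie products (their sum is again pre-Lie). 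You instead prove the compatibility inline, and your bookkeeping is right: expanding $\cdot_N$, the inner-$N$ terms cancel in pairs or assemble into $N\big((x,y,z)-(y,x,z)\big)=0$, and the remaining terms are exactly the three associator differences you list, each zero by pre-Lie symmetry. Your route buys self-containedness at the cost of that expansion; the paper's is shorter by citation. In fact your version is arguably cleaner than what is printed, since the paper's displayed equation at that step is a misprint: it merely restates that $\cdot_N$ is pre-Lie rather than the mixed compatibility that is actually needed.

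One claim in your closing paragraph is wrong, although it does not affect the validity of your proof: the shortcut you dismiss does exist. The subadjacent-pre-Lie Proposition does not follow from (A1)--(A2) alone, nor is its proof a naive term-by-term summation; writing $L_i$ and $R_i$ for the two sides of (A$i$), one has the formal identity
\begin{equation*}
(x\ast y)\ast z-x\ast(y\ast z)\;=\;L_1(x,y,z)+L_3(x,y,z)-L_2(x,y,z)-R_2(y,x,z),
\end{equation*}
valid for arbitrary bilinear $\triangleright,\triangleleft,\circ$. Antisymmetrizing in $x,y$ shows that once (A1) and (A2) hold, the pre-Lie property of $\ast$ is \emph{equivalent} to (A3). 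Since in the present situation $\ast=\cdot_N$ is already known to be pre-Lie (the Nijenhuis example in Section 2 of the paper, from \cite{WS19}), your verifications of (A1) and (A2) would have yielded (A3) with no further computation, sparing the entire bookkeeping step.
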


\begin{proof}  For any $x, y, z\in \mathfrak{g}$, we have
\begin{align*}
    ~&(x \ast y)\triangleright z-x\triangleright(y\triangleright z)\\
    =~& N (x \ast y)\cdot z-x\triangleright (N (y) \cdot  z)\\
    =~& (N(x)\cdot N(y))\cdot z -N(x)\cdot( N (y) \cdot z)\\
    =~& (N(y)\cdot N(x))\cdot z -N(y)\cdot( N (x) \cdot z)\\
    =~&(y \ast x)\triangleright z-y\triangleright(x\triangleright z).
\end{align*}
Then the identity (A1) of Definition \ref{defn-ns} holds. We also have
\begin{align*}
   ~&  y\triangleleft (x\ast z) -(y\triangleleft x)\triangleleft z\\
    =~&y\cdot N(x\ast z)  - (y\cdot N(x))\triangleleft z\\
    =~& y \cdot (N(x)\cdot N(z))-(y\cdot N(x))\cdot N(z) \\
     =~& N(x) \cdot (y\cdot N(z))-(N(x)\cdot y)\cdot N(z)\\
    =~& x\triangleright (y \triangleleft z ) - (x\triangleright y)\triangleleft z.
\end{align*}
Then the identity (A2) holds. To prove the identity (A3), we first recall from \cite{WS19} that the given  pre-Lie algebra structure $\cdot$ and the deformed  pre-Lie algebra structure $\cdot_N$ given in (\ref{deformed-leib}) are compatible in sense that their sum also defines a  pre-Lie algebra structure on $\mathfrak{g}$. This is equivalent to
\begin{align}\label{comp-equiv}
     (x \cdot_N y) \cdot_N z-x \cdot_N (y \cdot_N z) = (y \cdot_N x) \cdot_N z- y \cdot_N (x \cdot_N z),
\end{align}
for $x, y, z \in \mathfrak{g}$. Then the identity (A3) of Definition \ref{defn-ns}  holds by (\ref{comp-equiv}). Thus $(\mathfrak{g}, \triangleright, \triangleleft, \circ)$ is an NS-pre-Lie algebra.
\end{proof}

\begin{exam}
Let $(\mathfrak{g}, \cdot)$ be a 2-dimensional pre-Lie algebra with a basis $\{e_1, e_2\}$ whose non-zero
products are given as follows:
\begin{eqnarray*}
e_2 \cdot e_1 = -e_1,~~~~~~ e_2 \cdot e_2 = e_2.
\end{eqnarray*}
In \cite{WS19}, the authors verified  that all
$N=\left(
\begin{array}{ccc}
 c & d  \\
  0 & c \\
   \end{array}
   \right)$  with $c\neq 0$ are invertible Nijenhuis
operators on the pre-Lie algebra $(\mathfrak{g}, \cdot)$. By Proposition \ref{prop-ns},    $(\mathfrak{g}, \triangleright, \triangleleft, \circ)$ is a 2-dimensional NS-pre-Lie algebra whose nonzero
products are given as follows:
\begin{eqnarray*}
&&e_2 \triangleright e_1 = -ce_1,~~~~~~~~e_2 \triangleright e_2=ce_2,\\
&& e_2 \triangleleft e_1 = -ce_1,~~~~~~~~  e_2 \triangleleft e_2 = -de_1+ce_2,\\
 &&e_2 \circ e_1 = ce_1,~~~~~~~~~~ e_2 \circ e_2 =-de_1-ce_2.
\end{eqnarray*}
\end{exam}

\begin{exam}
Let $(\mathfrak{g}, \cdot)$ be a 3-dimensional pre-Lie algebra with a basis $\{e_1, e_2, e_3\}$ whose nonzero
products are given as follows:
\begin{eqnarray*}
e_3 \cdot e_2 = e_2,~~~~~ e_3 \cdot e_3 = -e_3.
\end{eqnarray*}
In \cite{WS19}, the authors verified  that all
$N=\left(
\begin{array}{ccc}
 d & 0  & 0 \\
  0 & e & f \\
  0 & 0 & e \\
   \end{array}
   \right)$  with $de\neq 0$ are invertible Nijenhuis
operators on the pre-Lie algebra $(\mathfrak{g}, \cdot)$. By Proposition \ref{prop-ns},    $(\mathfrak{g}, \triangleright, \triangleleft, \circ)$ is a 3-dimensional NS-pre-Lie algebra whose nonzero
products are given as follows:
\begin{eqnarray*}
&&e_3 \triangleright e_2 = ee_2,~~~~~~~~e_3 \triangleright e_3=-ee_3,\\
&& e_3 \triangleleft e_2 = ee_2,~~~~~~~~  e_3 \triangleleft e_3 = fe_2-ee_3,\\
 &&e_3 \circ e_2 = -ee_2,~~~~~~ e_3 \circ e_3 =fe_2+ee_3.
\end{eqnarray*}
\end{exam}

\medskip

Let $(A,  \triangleright, \triangleleft, \circ)$ be an NS-pre-Lie algebra. Define two linear maps $\mathcal{L}_{\triangleright}: A \rightarrow gl(A)$,~
$\mathcal{R}_{\triangleleft}: A \rightarrow gl(A)$ and a bilinear map $H : A \otimes A \rightarrow A$ by
\begin{align*}
\mathcal{L}_{\triangleright}(x)y=x\triangleright y,~~~~~~~ \mathcal{R}_{\triangleleft}(y)x=x\triangleleft y,~~~~~~H(x, y)=x \circ y~ \text{ for } x, y\in A.
\end{align*}

\begin{prop}\label{ns-compatible}
Let $(A,  \triangleright, \triangleleft, \circ)$ be an NS-pre-Lie algebra. Then $(A;  \mathcal{L}_{\triangleright}, \mathcal{R}_{\triangleleft})$ is a representation of the subadjacent pre-Lie algebra $(A, \ast)$, and $H$  is a $2$-cocycle. Moreover, the identity map $\id: A \rightarrow A$ is a \relwtrey operator on the pre-Lie algebra $(A, \ast)$ with respect to the representation $(A;  \mathcal{L}_{\triangleright}, \mathcal{R}_{\triangleleft})$.
\end{prop}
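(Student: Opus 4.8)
The plan is to verify the three assertions in turn, in each case substituting the definitions of $\mathcal{L}_{\triangleright}$, $\mathcal{R}_{\triangleleft}$ and $H$ into the relevant axiom and recognizing the outcome as one of the defining identities (A1)--(A3) of the NS-pre-Lie algebra. Since the subadjacent multiplication is $x \ast y = x \triangleright y + x \triangleleft y + x \circ y$, no genuine computation is needed beyond unwinding the notation; the entire content of the proposition is a dictionary between the NS-pre-Lie axioms and the data of a \relwtrey operator.

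First I would check that $(A; \mathcal{L}_{\triangleright}, \mathcal{R}_{\triangleleft})$ is a representation of $(A, \ast)$. Expanding the first representation identity $\mathcal{L}_{\triangleright}(x)\mathcal{L}_{\triangleright}(y)u - \mathcal{L}_{\triangleright}(x \ast y)u = \mathcal{L}_{\triangleright}(y)\mathcal{L}_{\triangleright}(x)u - \mathcal{L}_{\triangleright}(y \ast x)u$ gives $x \triangleright (y \triangleright u) - (x \ast y)\triangleright u = y \triangleright (x \triangleright u) - (y \ast x)\triangleright u$, which is exactly (A1). Expanding the second identity $\mathcal{L}_{\triangleright}(x)\mathcal{R}_{\triangleleft}(y)u - \mathcal{R}_{\triangleleft}(y)\mathcal{L}_{\triangleright}(x)u = \mathcal{R}_{\triangleleft}(x \ast y)u - \mathcal{R}_{\triangleleft}(y)\mathcal{R}_{\triangleleft}(x)u$ yields $x \triangleright (u \triangleleft y) - (x \triangleright u)\triangleleft y = u \triangleleft (x \ast y) - (u \triangleleft x)\triangleleft y$, which is (A2) after relabelling $y \mapsto u$, $z \mapsto y$. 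Hence the representation property is precisely (A1) together with (A2).

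Next I would verify that $H$ is a $2$-cocycle. Here I substitute $H(x,y) = x \circ y$, $\mathcal{L}_{\triangleright}(x) = x \triangleright (-)$, $\mathcal{R}_{\triangleleft}(z) = (-)\triangleleft z$, and $[x,y]_{\ast} = x \ast y - y \ast x$ into the $2$-cocycle condition. After collecting terms it becomes exactly the difference of the two sides of (A3), so (A3) forces the cocycle condition to vanish. This is the one place where care is required: the signs attached to the two $\mathcal{R}_z$-terms and to the bracket term $H([x,y]_{\ast}, z) = (x\ast y)\circ z - (y\ast x)\circ z$ must be tracked carefully so that the collected expression matches (A3) and not its negative. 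I expect this sign-bookkeeping to be the only (mild) obstacle in the argument.

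Finally, the claim that $\id$ is a \relwtrey operator is immediate from the definition of $\ast$. The defining equation of Definition~\ref{def:reypre} with $K = \id$ reads $u \ast v = \mathcal{L}_{\triangleright}(u)v + \mathcal{R}_{\triangleleft}(v)u + H(u,v) = u \triangleright v + u \triangleleft v + u \circ v$, which holds by construction. Thus the three verifications together show that the NS-pre-Lie axioms are precisely the conditions making $\id$ a \relwtrey operator for the split structure $(A, \ast)$, and no step beyond the translation of definitions is required.
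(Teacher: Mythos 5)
Your proposal is correct and follows essentially the same route as the paper's own proof: identify the two representation identities with axioms (A1) and (A2), recognize the $2$-cocycle condition for $H(x,y)=x\circ y$ as a sign-rearrangement of (A3), and observe that the defining equation for $\id$ being a \relwtrey operator is just the definition of $\ast$. The paper likewise treats the (A3)/cocycle equivalence as a direct translation without writing out the sign bookkeeping, so the two arguments match in both structure and level of detail.
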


\begin{proof}  For any $x, y, z\in A$, we have
\begin{eqnarray*}
&& \mathcal{L}_{\triangleright}(x {\ast} y)z -\mathcal{L}_{\triangleright}(x)\mathcal{L}_{\triangleright}(y)z\\
 &=& (x {\ast} y) \triangleright z- x \triangleright (y\triangleright z)\\
&\stackrel{(A1)}{=}& (y \ast x)\triangleright z - y \triangleright (x\triangleright z)\\
&=& \mathcal{L}_{\triangleright}(y {\ast} x)z -\mathcal{L}_{\triangleright}(y)\mathcal{L}_{\triangleright}(x)z,
\end{eqnarray*}
and
\begin{eqnarray*}
&& \mathcal{L}_{\triangleright}(x)\mathcal{R}_{\triangleleft}(z)y-\mathcal{R}_{\triangleleft}(z)\mathcal{L}_{\triangleright}(x)y\\
&=& x\triangleright(y\triangleleft z)- (x\triangleright y)\triangleleft z\\
&\stackrel{(A2)}{=}& y \triangleleft(x\ast z)-(y\triangleleft x)\triangleleft z\\
&=& \mathcal{R}_{\triangleleft}(x\ast z)y-\mathcal{R}_{\triangleleft}(z)\mathcal{R}_{\triangleleft}(x)y.
\end{eqnarray*}
Then $(A;  \mathcal{L}_{\triangleright}, \mathcal{R}_{\triangleleft})$ is a representation of the subadjacent pre-Lie algebra $(A, \ast)$. Moreover, the condition (A3) is equivalent to $H$ is a $2$-cocycle in the  cochain complex of the pre-Lie algebra $(A, \ast)$ with coefficients in the representation $(A; \mathcal{L}_\triangleright, \mathcal{R}_\triangleleft)$. Finally, we have
\begin{eqnarray*}
\id(\mathcal{L}_{\triangleright}(\id ~x)y+\mathcal{R}_{\triangleleft}(\id ~y)x+H(\id~x, \id ~y))=x\triangleright y+x\triangleleft y+x\circ y=\id ~x {\ast}\id ~y,
\end{eqnarray*}
which implies that $\id : A \rightarrow A$ is a \relwtrey operator on the pre-Lie algebra
$(A, \ast)$ with respect to the representation $(A;  \mathcal{L}_{\triangleright}, \mathcal{R}_{\triangleleft})$.
\end{proof}

\begin{prop}\label{twisted-rota-ns}
Let $(\mathfrak{g}, \cdot)$ be a pre-Lie algebra, $(V; \mathcal{L}, \mathcal{R})$  a representation of $\frakg$ and $H \in C^2 (\mathfrak{g}, V)$ a $2$-cocycle. Let $K : V \rightarrow \mathfrak{g}$ be a \relwtrey operator. Then there exists an NS-pre-Lie algebra on $V$ with bilinear operations defined by
\begin{eqnarray*}
u \triangleleft v :=\mathcal{R}_{Kv}u,~~~~ u \triangleright v := \mathcal{L}_{Ku}v,~~~ u \circ v := H(Ku, Kv) \tforall u, v\in V.
\end{eqnarray*}
\end{prop}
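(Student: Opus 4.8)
The plan is to verify the three defining axioms (A1)--(A3) of Definition~\ref{defn-ns} directly, exploiting the observation that the subadjacent product of the proposed structure coincides with the induced pre-Lie product of Proposition~\ref{induced-leib}. Indeed, with the stated operations one has $u \ast v = u \triangleright v + u \triangleleft v + u \circ v = \mathcal{L}_{Ku}v + \mathcal{R}_{Kv}u + H(Ku, Kv) = u \cdot_K v$, so $(V, \ast)$ is already known to be a pre-Lie algebra. The single identity driving every computation is the defining relation of a \relwtrey operator, rewritten as $K(u \ast v) = Ku \cdot Kv$; this lets me replace $K(u \ast v)$ by an honest product in $\mathfrak{g}$ wherever it occurs.

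First I would check (A1). Expanding $\triangleright = \mathcal{L}_{K(-)}$ and using $K(u \ast v) = Ku \cdot Kv$, both sides collapse into the left-multiplication operators: the left-hand side becomes $\mathcal{L}_{Ku \cdot Kv}w - \mathcal{L}_{Ku}\mathcal{L}_{Kv}w$ and the right-hand side becomes $\mathcal{L}_{Kv \cdot Ku}w - \mathcal{L}_{Kv}\mathcal{L}_{Ku}w$. Their equality is exactly the first representation axiom for $(V; \mathcal{L}, \mathcal{R})$ evaluated at $(Ku, Kv, w)$. Next, (A2) is handled in the same spirit: after substituting the operations, the two sides reduce to $\mathcal{L}_{Ku}\mathcal{R}_{Kw}v - \mathcal{R}_{Kw}\mathcal{L}_{Ku}v$ and $\mathcal{R}_{Ku \cdot Kw}v - \mathcal{R}_{Kw}\mathcal{R}_{Ku}v$, whose equality is the second representation axiom evaluated at $(Ku, Kw, v)$.

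The hard part will be (A3), which is the only axiom calling on the cocycle condition for $H$. Writing out all eight terms with $\circ = H(K(-), K(-))$, $\triangleright = \mathcal{L}_{K(-)}$, $\triangleleft = \mathcal{R}_{K(-)}$, and again using $K(u \ast v) = Ku \cdot Kv$, the difference (LHS of (A3)) $-$ (RHS of (A3)) becomes
\begin{align*}
&H(Ku \cdot Kv, Kw) - H(Ku, Kv \cdot Kw) + \mathcal{R}_{Kw}H(Ku, Kv) - \mathcal{L}_{Ku}H(Kv, Kw)\\
&\quad - H(Kv \cdot Ku, Kw) + H(Kv, Ku \cdot Kw) - \mathcal{R}_{Kw}H(Kv, Ku) + \mathcal{L}_{Kv}H(Ku, Kw).
\end{align*}
I would then recognize this expression, after invoking bilinearity of $H$ and $[Ku, Kv]_{\mathfrak{g}} = Ku \cdot Kv - Kv \cdot Ku$, as precisely the negative of the $2$-cocycle identity for $H$ evaluated at $(x, y, z) = (Ku, Kv, Kw)$, hence zero. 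The main bookkeeping obstacle is matching these eight terms one-by-one against the seven summands of the cocycle condition and tracking the signs; once that is done (A3) follows immediately, completing the verification that $(V, \triangleright, \triangleleft, \circ)$ is an NS-pre-Lie algebra. This is the exact converse of Proposition~\ref{ns-compatible}.
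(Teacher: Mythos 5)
Your proof is correct and follows essentially the same route as the paper's: (A1) and (A2) reduce, via the identity $K(u\ast v)=Ku\cdot Kv$, to the two representation axioms evaluated at $(Ku,Kv,w)$ and $(Ku,Kw,v)$ respectively, and (A3) is exactly the $2$-cocycle condition $(\partial H)(Ku,Kv,Kw)=0$. Your eight-term expansion for (A3), including the regrouping $H(Ku\cdot Kv,Kw)-H(Kv\cdot Ku,Kw)=H([Ku,Kv]_{\mathfrak{g}},Kw)$, matches the paper's verification term for term.
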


\begin{proof}  For any $u, v, w\in V$, we have
\begin{eqnarray*}
&&(u \ast v)\triangleright w - u \triangleright (v\triangleright w)\\
&=& \mathcal{L}_{K(u\ast v)}w-\mathcal{L}_{Ku}\mathcal{L}_{Kv}w\\
&=& \mathcal{L}_{Ku\ast Kv}w-\mathcal{L}_{Ku}\mathcal{L}_{Kv}w\\
&=&\mathcal{L}_{Kv\ast Ku}w-\mathcal{L}_{Kv}\mathcal{L}_{Ku}w\\
&=& (v \ast u)\triangleright w - v \triangleright (u\triangleright w)
\end{eqnarray*}
and
\begin{eqnarray*}
&& v\triangleleft (u\ast w)-(v\triangleleft u)\triangleleft w\\
&=& \mathcal{R}_{K(u\ast w)}v-\mathcal{R}_{Kw}\mathcal{R}_{Ku}v\\
&=&\mathcal{L}_{Kv}\mathcal{R}_{Kw}u-\mathcal{R}_{Kw}\mathcal{L}_{Kv}u\\
&=& u \triangleright (v \triangleleft w)-(u \triangleright v )\triangleleft w.
\end{eqnarray*}
Hence identities (A1) and (A2) of Definition \ref{defn-ns} holds. Since $H$ is a $2$-cocycle, we have
\begin{align*}
(\partial H)(Ku, Kv, Kw) = 0,
\end{align*}
that is,
\begin{align*}
   \mathcal{L}_{Ku}H(Kv, Kw) -& \mathcal{L}_{Kv}H(Ku, Kw) - \mathcal{R}_{Kw}H(Ku, Kv)+ \mathcal{R}_{Kw}H(Kv, Ku) \\
   -& H(Kv, Ku\cdot Kw) +H(Ku, Kv\cdot Kw)- H([Ku, Kv]_\mathfrak{_g}, Kw) =0.
\end{align*}
This is equivalent to the condition (A3) of Definition \ref{defn-ns}. This completes the proof.
\end{proof}

\begin{remark}
The subadjacent pre-Lie algebra of the NS-pre-Lie algebra constructed in Proposition \ref{twisted-rota-ns} is given by
\begin{align*}
    u \ast v =\mathcal{L}_{Ku} v + \mathcal{R}_{Kv} u + H (Ku, Kv), ~ \text{ for } u, v \in V.
\end{align*}
This pre-Lie algebra on $V$ coincides with the one given in Proposition \ref{induced-leib}.
\end{remark}

\medskip

At the end of this section, we give a necessary and sufficient condition for the existence of a compatible NS-pre-Lie algebra structure on a pre-Lie algebra.

\begin{prop}
Let $(\mathfrak{g}, \cdot)$ be a pre-Lie algebra. Then there is a compatible NS-pre-Lie algebra structure on $\mathfrak{g}$ if and only if there exists an invertible \relwtrey operator $K: V\rightarrow \mathfrak{g}$  with respect to a representation $(V;  \mathcal{L}, \mathcal{R})$ and a $2$-cocycle $H$. Furthermore, the compatible NS-pre-Lie algebra on $\mathfrak{g}$ is given by
\begin{align*}
x\triangleleft y :=K(\mathcal{R}_yK^{-1}x),~~~ x\triangleright y:=K(\mathcal{L}_xK^{-1}y),~~~ x \circ y=KH(x,y)  \tforall  x, y\in \mathfrak{g}.
\end{align*}
\end{prop}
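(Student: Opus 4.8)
The plan is to prove both implications by leveraging the two preceding propositions: Proposition~\ref{twisted-rota-ns}, which produces an NS-pre-Lie structure on $V$ from a \relwtrey operator $K : V \to \mathfrak{g}$, and Proposition~\ref{ns-compatible}, which realizes the identity map as a \relwtrey operator attached to any NS-pre-Lie algebra. The one new ingredient is that when $K$ is invertible it is a linear isomorphism $V \cong \mathfrak{g}$, so I can transport the NS-pre-Lie structure from $V$ to $\mathfrak{g}$ and read off the claimed formulas.

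For the forward direction, suppose $K : V \to \mathfrak{g}$ is an invertible \relwtrey operator with respect to $(V; \mathcal{L}, \mathcal{R})$ and a $2$-cocycle $H$. By Proposition~\ref{twisted-rota-ns}, $V$ carries the NS-pre-Lie operations $u \triangleright v = \mathcal{L}_{Ku}v$, $u \triangleleft v = \mathcal{R}_{Kv}u$, $u \circ v = H(Ku, Kv)$. I then transport each operation along the isomorphism $K$ by setting $x \star y := K\big(K^{-1}x \star K^{-1}y\big)$ for $\star \in \{\triangleright, \triangleleft, \circ\}$; substituting $K^{-1}x, K^{-1}y$ and using $K(K^{-1}x) = x$, $K(K^{-1}y) = y$ yields exactly the three formulas in the statement. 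Since $K$ is a bijection intertwining the operations on $V$ with those on $\mathfrak{g}$, and since $\ast$ is transported the same way by linearity of $K$, the axioms (A1)--(A3) of Definition~\ref{defn-ns} carry over verbatim, so $(\mathfrak{g}, \triangleright, \triangleleft, \circ)$ is an NS-pre-Lie algebra. To check compatibility I compute the subadjacent product $x \ast y = x\triangleright y + x \triangleleft y + x \circ y = K\big(\mathcal{L}_x K^{-1}y + \mathcal{R}_y K^{-1}x + H(x,y)\big)$; putting $u = K^{-1}x$, $v = K^{-1}y$ this is precisely $K(\mathcal{L}_{Ku}v + \mathcal{R}_{Kv}u + H(Ku,Kv)) = Ku \cdot Kv = x \cdot y$ by the defining identity of a \relwtrey operator (Definition~\ref{def:reypre}). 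Hence the transported structure is a compatible NS-pre-Lie structure on $(\mathfrak{g}, \cdot)$.

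For the converse, suppose $(\mathfrak{g}, \triangleright, \triangleleft, \circ)$ is a compatible NS-pre-Lie structure on $(\mathfrak{g}, \cdot)$, so that $x \ast y = x \cdot y$. Proposition~\ref{ns-compatible} supplies a representation $(\mathfrak{g}; \mathcal{L}_\triangleright, \mathcal{R}_\triangleleft)$ of the subadjacent pre-Lie algebra $(\mathfrak{g}, \ast) = (\mathfrak{g}, \cdot)$, a $2$-cocycle $H(x,y) = x \circ y$, and asserts that $\id : \mathfrak{g} \to \mathfrak{g}$ is a \relwtrey operator with respect to this data. Taking $V = \mathfrak{g}$ and $K = \id$, which is visibly invertible, gives the required invertible \relwtrey operator, completing the equivalence.

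The argument is essentially formal, so there is no deep obstacle; the only point requiring care is the bookkeeping in the forward direction, namely confirming that transporting along $K$ reproduces exactly the three stated formulas and that the subadjacent product collapses to $\cdot$ through the Reynolds identity rather than some twisted variant. Once these identifications are made precise, invariance of the axioms under the isomorphism $K$ is immediate.
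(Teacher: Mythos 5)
Your proposal is correct and follows essentially the same route as the paper: forward direction by transporting the NS-pre-Lie structure of Proposition~\ref{twisted-rota-ns} along the isomorphism $K$ and collapsing the subadjacent product to $\cdot$ via the Reynolds identity, converse by invoking Proposition~\ref{ns-compatible} with $K = \id$. Your version is in fact slightly cleaner, since the paper's transport formulas contain a harmless argument-swap typo and its final compatibility chain ends with $x \ast y$ where $x \cdot y$ is meant, both of which your bookkeeping gets right.
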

\begin{proof}  Let $K: V\rightarrow \mathfrak{g}$ be an invertible \relwtrey operator on $\mathfrak{g}$ with respect to a representation $(V;  \mathcal{L}, \mathcal{R})$ and a $2$-cocycle $H$. Then it follows from  Proposition \ref{twisted-rota-ns} that there is an NS-pre-Lie algebra on $V$ defined by
\begin{eqnarray*}
u~\Bar{\triangleleft}~ v:=\mathcal{R}_{Kv}u,~~~~~u ~\Bar{\triangleright}~ v:=\mathcal{L}_{Ku}v,~~~~~u ~\Bar{\circ}~ v:=H(Ku, Kv)\, \text{ for all }\, u, v\in V.
\end{eqnarray*}
Since $K$ is an invertible map, the bilinear operations
\begin{eqnarray*}
&&x \triangleleft y := K(K^{-1} y ~\Bar{\triangleleft}~ K^{-1} x) = K(\mathcal{R}_yK^{-1}x), \\
&&x \triangleright y := K(K^{-1} x ~\Bar{\triangleright}~ K^{-1}y) = K(\mathcal{L}_xK^{-1}y),\\
&& x \circ y := K(K^{-1} x ~\Bar{\circ}~ K^{-1}y) =KH(x,y), ~\text{ for } x, y \in \mathfrak{g}
\end{eqnarray*}
defines an NS-pre-Lie algebra on $\mathfrak{g}$. Moreover,  we have
\begin{eqnarray*}
 && x \triangleright y+x \triangleleft y+x\circ y\\
 &=&K(\mathcal{L}_xK^{-1}y)+ K(\mathcal{R}_yK^{-1}x)+KH(x,y) \\
 &=&K(\mathcal{L}_{K\circ K^{-1}x}K^{-1}y)+K(\mathcal{R}_{K\circ K^{-1}y}K^{-1}x)+KH(K\circ K^{-1}x,K\circ K^{-1}y)\\
 &=&(K\circ K^{-1}x) \ast (K\circ K^{-1}y) = x\ast y.
\end{eqnarray*}
Conversely, let $(\mathfrak{g}, \triangleright, \triangleleft, \circ)$ be a compatible NS-pre-Lie algebra structure on $\mathfrak{g}$. Then it follows from Proposition \ref{ns-compatible} that $(\mathfrak{g}; \mathcal{L}_{\triangleright}, \mathcal{R}_{\triangleleft})$ 
is a representation of the pre-Lie algebra $(\mathfrak{g}, \cdot)$, 
and the identity map $\id: \mathfrak{g} \rightarrow \mathfrak{g}$ is a \relwtrey operator on the pre-Lie algebra $(\mathfrak{g}, \cdot)$ with respect to the representation $(\mathfrak{g};   \mathcal{L}_{\triangleright}, \mathcal{R}_{\triangleleft})$. This completes the proof.
\end{proof}

\noindent {\bf Acknowledgments}:
This research is supported by the National Natural Science Foundation of China (Grant No.\@ 12161013, 12101316). We thank professor Li Guo,   Dr. Apurba Das and Dr. Kai Wang for useful discussion.

\end{document}